\documentclass[11pt,letterpaper,titlepage]{article}
\usepackage[T1]{fontenc}
\usepackage{lmodern,amsmath,amssymb,amsthm,mathtools,microtype}
\usepackage[margin=1in,headheight=14pt,headsep=7mm]{geometry}
\usepackage{xcolor,fancyhdr,hyperref}
\definecolor{ink}{HTML}{17374B}
\definecolor{muted}{HTML}{53616A}
\hypersetup{colorlinks=true,linkcolor=ink,citecolor=ink,urlcolor=ink,
 pdftitle={The Modular Nash-Williams Theorem},
 pdfauthor={Amir Jafari},
 pdfsubject={Modular orientations, tree packing, nowhere-zero flows, and signed graphs}}
\newtheorem{theorem}{Theorem}
\newtheorem{lemma}[theorem]{Lemma}
\newtheorem{proposition}[theorem]{Proposition}
\newtheorem{corollary}[theorem]{Corollary}
\newcommand{\R}{\mathbb R}
\newcommand{\Z}{\mathbb Z}
\newcommand{\one}{\mathbf 1}
\newcommand{\rank}{\operatorname{rank}}
\newcommand{\im}{\operatorname{im}}
\newcommand{\dist}{\operatorname{dist}}
\begin{document}
\hypersetup{pageanchor=false}
\begin{titlepage}
\centering
\vspace*{0.65in}
{\LARGE\bfseries The Modular Nash--Williams Theorem\par}
\vspace{1.5em}
{\large Amir Jafari\\New Uzbekistan University\par}
\vspace{2em}
\begin{minipage}{0.92\textwidth}
\raggedright
\begin{center}\bfseries Abstract\end{center}
\parindent=1em
Nash--Williams's orientation theorem converts undirected edge-connectivity into directed connectivity. We extend it to prescribed outdegrees modulo $q$, whose residues must sum to the number of edges modulo $q$. For integers $q,k\ge1$, every such prescription has a realization with at least $k$ edges in each direction across each nontrivial cut if the graph is $(2q+2k-2)$-edge-connected or contains $q+2k-1$ edge-disjoint spanning trees. The method combines a cycle-space approximation bound with rounding that preserves the imbalance across every cut.

Fractional spanning-tree packing gives an exact criterion: every compatible prescription has a strongly connected realization if and only if the fractional packing number exceeds $q$. This yields a polynomial-time recognition test. With no directed connectivity requirement, $q$ disjoint spanning trees suffice, proving Hasanvand's prescribed-residue conjecture. For every odd prime $p$, this proves the conjectured statement that the union of $p$ graphic bases over the field with $p$ elements is an additive basis.

For every integer $p\ge5$, the theorem and the counterexamples of Han, Li, Wu and Zhang give the exact edge-connectivity threshold $4p+2$ for a modulo-$(2p+1)$ orientation. We also show that every six-edge-connected graph has circular flow number strictly below three, as conjectured by Li, Thomassen, Wu and Zhang. For three-flow-critical graphs on $n$ vertices and $m$ edges, we improve the previous general bound $m\le4n-10$ to $m\le3n-6$, and to $m\le3n-7$ when $n\ge5$. Further applications include Seymour's six-flow theorem, star decompositions, antisymmetric flows, and signed modular orientations.
\end{minipage}
\end{titlepage}
\setcounter{page}{1}
\hypersetup{pageanchor=true}

\section{Introduction}

Nash--Williams's orientation theorem~\cite{NW} says that a $2k$-edge-connected graph can be oriented so that at least $k$ edges cross every nontrivial cut in each direction. Now prescribe the outdegree of each vertex modulo $q$. There is an immediate compatibility condition: the prescribed residues must sum to the number of edges modulo $q$. Even without a directed connectivity requirement, finding an orientation that meets these local arithmetic constraints is a central problem in flow theory. We ask how much undirected connectivity suffices to meet the prescriptions while retaining directed connectivity.

Throughout the unsigned part of the paper, $G=(V,E)$ is a finite loopless multigraph with $n=|V|\ge2$ vertices and $m=|E|$ edges; parallel edges are distinct. We write $\Z_q=\Z/q\Z$ for the residues modulo a positive integer $q$. For $\varnothing\ne S\subsetneq V$, let $\delta_G(S)$ be the edges with exactly one endpoint in $S$, and put $\lambda(G)=\min_{\varnothing\ne S\subsetneq V}|\delta_G(S)|$. These are the \emph{nontrivial cuts}; for an integer $t\ge1$, $G$ is $t$-edge-connected when $\lambda(G)\ge t$.

In an orientation $D$, let $d_D^+(S)$ and $d_D^-(S)$ count the edges leaving and entering $S$. Write $d_D^\pm(v)=d_D^\pm(\{v\})$ and $\deg_G(v)=d_D^+(v)+d_D^-(v)$; the notation $\pm$ asserts both sign choices. The orientation is \emph{$k$-arc-connected} when both cut counts are at least $k$, and \emph{strongly connected} when $k=1$, equivalently when every ordered pair of vertices is joined by a directed path.

\begin{samepage}
A function $\pi:V\to\Z_q$ is \emph{compatible} if
\begin{equation}\label{eq:compatible}
 \sum_{v\in V}\pi(v)\equiv m\pmod q,
\end{equation}
the necessary condition obtained by summing all outdegrees.
\end{samepage}

\begin{theorem}[Modular Nash--Williams]\label{thm:main}
Let $q,k\ge1$ be integers. Suppose that either
\begin{enumerate}
\item $G$ is $(2q+2k-2)$-edge-connected; or
\item $G$ contains $q+2k-1$ pairwise edge-disjoint spanning trees.
\end{enumerate}
Then every compatible function $\pi:V\to\Z/q\Z$ is realized by a $k$-arc-connected orientation $D$ satisfying
\[
 d_D^+(v)\equiv\pi(v)\pmod q\qquad(v\in V).
\]
\end{theorem}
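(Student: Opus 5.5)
The plan follows the abstract's outline: a fractional (real-valued) relaxation, a cycle-space approximation bound, and a rounding step that keeps the imbalance across every cut.

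\textbf{Reformulation.} Fix a reference orientation $D_0$ of $G$. Any orientation is obtained by reversing a set $F\subseteq E$. Reversing $F$ changes each outdegree by an element of the cut space. Hence prescribing $d^+(v)\bmod q$ amounts to prescribing the net outflow $d^+(v)-d^-(v)=2d^+(v)-\deg(v)$ modulo $2q$, and compatibility is exactly the condition that the total vanishes.

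Equivalently, we seek a function $x:E\to\{\pm1\}$ (orientation signs) with boundary $\partial x(v)\equiv \beta(v)\pmod{2q}$, where $\beta$ is any integer lift of the prescription with $\sum_v\beta(v)=0$ and $\beta(v)\equiv \deg(v)\pmod 2$. The $k$-arc-connectivity requirement reads $|\partial x(S)|\le |\delta(S)|-2k$ for every nontrivial cut $S$.

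\textbf{Step 1: choose a good integer target.} Among all lifts $b:V\to\Z$ with $b\equiv 2\pi-\deg \pmod{2q}$ and $\sum_v b(v)=0$, I want one that is small across every cut, say $|b(S)|\le |\delta(S)|-2k$ for all $S$. This is the cycle-space approximation bound.

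Start from the real flow viewpoint. Write the desired boundary as $b=\partial y$ for some real $y\in\R^E$. Any two choices of $y$ differ by an element of the cycle space, and shifting $b$ by $2q$ times a coboundary only changes the lift. So the freedom is in choosing which multiple of $2q$ to add at each vertex, which is a rounding problem on the lattice $\partial(\Z^E)$.

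I would pick $b$ minimizing $\sum_v b(v)^2$, or the maximum cut imbalance. Then I would argue that if some cut $S$ had $|b(S)|>|\delta(S)|-2k$, moving $2q$ units across $\delta(S)$ along an edge would decrease the objective. Since $|\delta(S)|\ge 2q+2k-2$ and $|b(S)|\equiv|\delta(S)|\pmod 2$, a balanced choice achieves $|b(S)|\le |\delta(S)|-2k$. For hypothesis (2), I would do the same with the tree packing: each of the $q+2k-1$ trees contributes one unit of slack per cut in a tree-by-tree greedy assignment of residues.

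\textbf{Step 2: Hoffman-type realization.} Given an integer $b$ with $\sum_v b(v)=0$, the right parity $b(v)\equiv \deg(v)\pmod 2$, and $|b(S)|\le |\delta(S)|-2k$ for all $S$, I need an orientation with net outflow exactly $b$ and at least $k$ arcs each way across every cut. This is a submodular flow / orientation feasibility question of Frank's type. Its conditions are precisely
\[
 d^+(S)=\tfrac12\bigl(|\delta(S)|+b(S)\bigr)\ge k
 \quad\text{and}\quad
 d^-(S)=\tfrac12\bigl(|\delta(S)|-b(S)\bigr)\ge k .
\]
Since the prescribed outdegree $\tfrac12(\deg(v)+b(v))$ must induce these counts, the Hakimi/Frank theorem on orientations with prescribed outdegrees and cut lower bounds (the intersecting-supermodular requirement $k$ on nontrivial sets) gives the orientation. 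For unified handling one can also use the fact that $G$ has a $k$-arc-connected orientation with the given outdegrees iff $\sum_{v\in S}o(v)\ge |E(S)|+k$ for all $S$.

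\textbf{Main obstacle.} The hard part is Step 1: finding a lift that is simultaneously small on \emph{every} cut, with the tight constant $2q+2k-2$ and not something like $4q$. A naive rounding of a real solution loses up to $q$ per vertex, hence an unbounded amount per cut. I would first try to take a fractional orientation $x\in[-1,1]^E$ with $\partial x=b_{\text{real}}$, where $b_{\text{real}}$ is the lift closest to $0$. I would then round it via a spanning-tree or cycle-space rounding that changes each cut's imbalance by less than $2q$, with parity fixing the final $\pm$ slack of $2k-2$.
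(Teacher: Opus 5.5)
Your reformulation and Step~2 are sound. Step~2 even needs less than Frank's theorem. Suppose an integer $b$ has the right residues modulo $2q$, the parity of the degrees and zero sum, and satisfies $|b(S)|\le|\delta_G(S)|-2k$ on every nontrivial cut. Then Hakimi's theorem (Lemma~\ref{lem:boundary-cut-test}) gives an orientation with net outflow exactly $b$, and its cut counts $\tfrac12(|\delta_G(S)|\pm b(S))$ are automatically at least $k$.

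The genuine gap is Step~1, which is the whole theorem, and the arguments you sketch do not prove it.
\begin{itemize}
\item Minimizing $\sum_v b(v)^2$ only forces $\max_v b(v)-\min_v b(v)\le2q$ and never sees the edges. Suppose the prescription forces $b(v)\equiv q\pmod{2q}$ at every vertex. Then every lift has $|b(v)|\ge q$, and the minimizers are exactly the balanced patterns $b(v)=\pm q$. In a graph made of two large dense halves joined by $2q+2k-2$ edges, one minimizer puts all the $+q$ values on one half.
\item The repair ``move $2q$ units across an edge of a violated cut'' fixes that cut. But it changes $b(S')$ by $\pm2q$ on every other cut containing the edge, and you give no decreasing potential.
\item More fundamentally, no bound uniform over cuts is possible. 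For $S=\{v:b(v)>0\}$ one has $b(S)=\tfrac12\sum_v|b(v)|$, which in the example above is at least $nq/2$ for every lift.
\end{itemize}
So the reasoning ``$|\delta_G(S)|\ge2q+2k-2$ plus parity'' handles one cut at a time. What is needed is a bound proportional to $|\delta_G(S)|$ that holds for all cuts simultaneously. The tree-by-tree greedy for hypothesis~(2) is likewise only a slogan. The paper notes that the best earlier tree count, even for residue realization alone with no connectivity, was $2q-2$.

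The missing idea is a simultaneous Diophantine approximation in the cycle space with a per-edge error bound, performed \emph{before} rounding; your closing paragraph has this order reversed.
\begin{itemize}
\item Let $a\in\Z^E$ be an integer routing of the residues, $Ba=h$, and let $U$ be an integer fundamental-cycle basis. The paper looks for $X=a+qUx-qz$ with $x$ real and $z$ integral. Then $BX=h-qBz$ is integral with the prescribed residues, and Lemma~\ref{lem:approx} yields $|X_e-\tfrac12|\le q/(2\tau_f(G))$ on every edge.
\item The input is the rank identity $\rank(U_F)=|F|-c(G-F)+1$. Combined with the partition formula, it gives $\min_F\rank(U_F)/|F|=1-1/\tau_f(G)$ (Lemma~\ref{lem:rank}). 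The approximation itself comes from Edmonds's matroid partition plus the Alon--Buci\'c--Davies lemma, or from the log-determinant weights of Appendix~\ref{app:approx}.
\item In your language the good lift is $b=B(2X-\one)$. Summing over a shore gives $|b(S)|\le(q/\tau_f)|\delta_G(S)|$. With parity, this yields your target $|b(S)|\le|\delta_G(S)|-2k$ whenever $\lambda(G)(1-q/\tau_f)>2k-2$.
\item Hypothesis~(1) gives this via $\tau_f\ge n\lambda/(2(n-1))$, whose slack $2q/n$ supplies the strict inequality. Hypothesis~(2) gives it via $\tau_f\ge t$ and $\lambda\ge t$.
\end{itemize}
Either your Step~2 or the paper's cycle rounding (Lemma~\ref{lem:round}), which preserves $BX$ and hence every cut imbalance exactly, then finishes the proof.
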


At $q=1$ the congruence disappears and the connectivity hypothesis is $2k$, recovering the uniform form of Nash--Williams's theorem. For general $q$, another $2q-2$ units of undirected edge-connectivity suffice to impose arbitrary compatible residues. At $q=2$, Frank and Kir\'aly~\cite[Theorem~4.1]{FK} give an exact criterion: every compatible parity prescription has a $k$-arc-connected realization if and only if every partition into $r\ge2$ parts has at least $(k+1)r-1$ edges joining distinct parts. Our theorem gives a uniform sufficient condition for all moduli.

\paragraph{Fractional packing and an exact criterion.}
The parameter behind both hypotheses is the fractional spanning-tree packing number $\tau_f(G)$. Assign nonnegative weights to spanning trees, with the sum of weights of the trees through any edge at most one. Then $\tau_f(G)$ is the maximum total weight. If $\tau_f(G)>q$, our fractional theorem gives a realizing orientation with
\[
 d_D^\pm(S)\ge
 \left\lceil\frac{|\delta_G(S)|}{2}
       \left(1-\frac{q}{\tau_f(G)}\right)\right\rceil
 \qquad(\varnothing\ne S\subsetneq V).
\]
In fact, every compatible prescription has a strongly connected realization exactly when $\tau_f(G)>q$. Equivalently, deleting any edge leaves $q$ disjoint spanning trees. This yields a polynomial-time test for the universal property. The case $q=2$ recovers the strong-connectivity case of Frank and Kir\'aly's theorem on parity orientations~\cite{FK}. Our construction leaves open the efficient computation of a realizing orientation for an individual prescription.

At the endpoint $\tau_f(G)\ge q$, one can still realize every compatible prescription. For $q\ge3$, the consequence for $q$ edge-disjoint spanning trees proves Hasanvand's prescribed-residue conjecture~\cite[Conjecture~3]{Hasanvand2016}. For every odd prime $p$, it also proves the $p$-basis form of the graphic additive basis conjecture~\cite{JLPT,CLSWWei}. Here an additive basis is a multiset of vectors whose subset sums cover the whole space. Start from a reference orientation: reversing an edge transfers one unit of outdegree between its endpoints, so the changes available by reversing a subset are the $0$--$1$ sums of oriented incidence columns. Over the field with $p$ elements, the columns of a spanning tree form a basis of the vertex vectors whose coordinates sum to zero. Thus the union of $p$ such graphic bases is an additive basis. Dimension-independent bounds on the number of bases were already known, even for the broader class of vectors supported on at most two coordinates~\cite{EJLT}; the conclusion here is the conjectured count $p$.

\paragraph{Consequences for flows.}
A modulo-$(2p+1)$ orientation has outdegree congruent to indegree modulo $2p+1$ at every vertex. Jaeger asked whether $4p$-edge-connectivity suffices for such orientations~\cite{Jaeger88}. Lov\'asz, Thomassen, Wu and Zhang proved that $6p$ suffices~\cite{LTWZ}. Han, Li, Wu and Zhang constructed $(4p+1)$-edge-connected counterexamples for $p\ge5$~\cite[Theorem~1.4]{HLWZ}. Theorem~\ref{thm:main} with $q=2p+1$ and $k=1$ gives the matching upper bound. Thus $4p+2$ is the exact universal edge-connectivity threshold for modulo-$(2p+1)$ orientations when $p\ge5$. For modulus five our bound is ten. Jaeger's reduction shows that a guarantee at nine would imply Tutte's five-flow conjecture~\cite[Proposition~4.2]{LTWZ}.

A circular $r$-flow assigns signed real values to the edges, with zero net outflow at every vertex and magnitudes between $1$ and $r-1$. The circular flow number $F_c(G)$ is the infimum of the feasible $r\ge2$. We prove $F_c(G)\le 2+2/(\tau_f(G)-1)$ for every connected bridgeless graph. In particular, $(4p+2)$-edge-connectivity gives $F_c(G)<2+1/p$. At $p=1$ this resolves the conjecture of Li, Thomassen, Wu and Zhang that every six-edge-connected graph has circular flow number strictly less than three~\cite[Conjecture~1.3]{LTWZindex}.

A \emph{balanced boundary} modulo $q$ assigns residues summing to zero to the vertices; a realizing orientation has outdegree minus indegree equal to those residues modulo $q$. The same argument yields Seymour's six-flow theorem, the star case of Bar\'at--Gerbner's packing-decomposition conjecture, and antisymmetric $\Z_5$-flows in every orientation of every ten-edge-connected graph. It also strengthens Chen and Ning's theorem on five-edge-connected graphs with at most five cuts of size five: every balanced boundary modulo three has a realization that is directed strongly connected~\cite{ChenNing}. For signed graphs we develop a rounding argument using inequalities for subgraphs with prescribed degrees. It gives a sufficient connectivity bound of $2k+2$ for modulo-$k$ orientations under the essential unbalance hypothesis, improving $6k-10$ for odd $k\ge5$~\cite{LMTZ}. Section~\ref{sec:signed} gives the signed-graph definitions and precise hypotheses.

\paragraph{Density of critical graphs.}
A connected bridgeless graph is three-flow-critical if it has no nowhere-zero integer three-flow but every edge contraction has one. Li, Ma, Shi, Wang and Wu proved $m\le4n-10$~\cite{LMSWW}. The recent bound $m\le3n-5$ of~\cite[Theorem~1.6]{ADLMS} assumes at most one vertex of degree at least seven and criticality under arbitrary vertex identifications. We prove $m\le3n-6$ using the cycle-space approximation bound and a parity argument, and sharpen it to $m\le3n-7$ for $n\ge5$ in Appendix~\ref{app:critical-density}. These bounds strengthen the general $3n-5$ conjecture stated in~\cite[Conjecture~1.3]{DMdensity} and come within one edge of the sharp conjectured bound $3n-8$ for $n\ge7$ in~\cite[Conjecture~1.3]{LMSWW}.

The density proof uses parity: an integral boundary with the parity of the vertex degrees can violate a cut inequality only by at least two. On a minimal subgraph violating $m\le3n-6$, the approximation error is smaller than two on every cut with connected shores. The resulting orientations contradict criticality. At the tight count $m=3n-6$, equality forces both shores of a violating cut to be trees. A perturbation of the edgewise error bounds handles the degree-five obstruction, and local reductions complete the proof for $n\ge5$.

\paragraph{Why approximation controls all cuts.}
Start each edge halfway between its two possible directions. A shift in the real cycle space preserves the vertex boundary, while an integral edge shift multiplied by $q$ preserves the target residues. A simultaneous approximation lemma, based on the method of Alon, Buci\'c and Davies~\cite{ABD}, lets us combine these shifts to obtain a fractional orientation close to the halfway point. The error bound is $q/(2\tau_f(G))$ per edge. The link is a rank identity: the rank of the cycle-space coordinates on an edge set equals its size minus the increase in the number of components when those edges are deleted. The partition formula for fractional tree packing turns these rank losses into the approximation bound. Finally, rounding along cycles preserves the exact vertex boundary. Summing over a shore cancels the contributions of its internal edges, so rounding also preserves the imbalance across every cut. This is why one approximation gives the directed cut bounds simultaneously.

\paragraph{Relation to the earlier preprint.}
The earlier preprint \emph{Graphic additive bases and graph flows}~\cite{Jafari} already gives the prescribed-residue theorem (Theorem~1.1), both unsigned arc-connectivity hypotheses and the quantitative cut estimate (Proposition~4.1 and its proof), the exact strong-realization criterion (Corollary~4.2), and the strict circular-flow and exact large odd-modulus thresholds (Corollary~1.3). The present article collects those results in a common orientation framework, gives a self-contained rank-ratio approximation proof, extends the rounding method to signed graphs, and proves the new density bounds for three-flow-critical graphs. This article supersedes that preprint, which will not be submitted separately.

ChatGPT, including Codex, assisted with developing and checking proof arguments, particularly the density arguments in Section~\ref{sec:critical-density} and Appendix~\ref{app:critical-density}; the AI methodology statement at the end describes its use.

\section{The approximation lemma}\label{sec:approx}

We use $\R$ and $\Z$ for the real numbers and integers. Vectors are columns, $A^{\mathsf T}$ denotes the transpose of $A$, and all ranks, spans, images and kernels are over $\R$ unless a modulus is specified. For a positive integer $r$, write $[r]=\{1,\ldots,r\}$. The vector $\one$ has all coordinates equal to one in the indicated space. Vector inequalities and floor operations are coordinatewise.

We write $\dist(t,\Z)=\min_{z\in\Z}|t-z|$. The following result of Alon, Buci\'c and Davies gives a short route to the approximation lemma. Appendix~\ref{app:approx} proves the approximation lemma directly.

\begin{samepage}
\begin{lemma}[ABD, Lemma~7~\cite{ABD}]\label{lem:ABD}
Let $K,d\ge1$ be integers and let $A_1,\ldots,A_K$ be nonsingular integer $d\times d$ matrices. For any $c_1,\ldots,c_K\in\R^d$, there exists $y\in\R^d$ such that
\[
 \dist\bigl((A_\ell y-c_\ell)_j,\Z\bigr)\ge\frac1{2K}
 \qquad(1\le\ell\le K,\ 1\le j\le d).
\]
\end{lemma}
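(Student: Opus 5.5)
We will prove the lemma by a probabilistic averaging argument over the torus, followed by a pigeonhole step. Choose $y$ uniformly at random from the fundamental domain $[0,N)^d$ for a large integer $N$; equivalently, consider $y$ modulo a suitable lattice. Since each $A_\ell$ is a nonsingular integer matrix, the map $y\mapsto A_\ell y \bmod \Z^d$ is a surjective group homomorphism $\R^d/\Z^d\to\R^d/\Z^d$ (it descends because $A_\ell\Z^d\subseteq\Z^d$). It pushes Haar measure forward to Haar measure. Hence, if $y$ is uniform on $\R^d/\Z^d$, then for each fixed $\ell$ the vector $A_\ell y-c_\ell$ is uniform modulo $\Z^d$. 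In particular each coordinate $(A_\ell y-c_\ell)_j$ is uniform modulo $1$.

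A naive union bound fails here. The bad event $\dist((A_\ell y-c_\ell)_j,\Z)<1/(2K)$ has probability $1/K$, and there are $Kd$ such events. So the plan must exploit the coordinatewise structure: a single $\ell$ must control all $d$ coordinates simultaneously. The idea I would try is a one-parameter family. Fix $\ell$ and consider the set $B_\ell=\{x\in\R^d/\Z^d:\ \dist(x_j,\Z)<1/(2K)\text{ for some }j\}$. Rather than choosing $y$ at random, choose $y=t\,u$ with $t\in\R$ and a fixed direction $u\in\R^d$. The direction should be such that each coordinate of $A_\ell u$ is a nonzero number, with $u$ chosen generically or integrally. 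Since $A_\ell$ is nonsingular, every row of $A_\ell$ is nonzero, so a generic $u$ makes all entries $(A_\ell u)_j\neq0$. Then along the line, the coordinate $(A_\ell tu-c_\ell)_j$ moves linearly in $t$ with nonzero speed $s_{\ell j}=(A_\ell u)_j$.

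The key step is then a one-dimensional statement. We have $K$ linear motions, indexed by $\ell$, and for each $\ell$ a bundle of $d$ coordinates. If we take $u$ integral with suitable structure, the rates satisfy $|s_{\ell j}|$ for fixed $\ell$ all in $\{r_\ell\}$ up to sign. This is where I would aim: pick $u$ so that all coordinates of $A_\ell u$ have the same absolute value $r_\ell$. Solving $A_\ell u=r_\ell\varepsilon$ for a sign vector $\varepsilon$ works for one $\ell$, though not all simultaneously. That is the main obstacle. The fallback is the lonely-runner-type covering argument: the bad set for coordinate $(\ell,j)$ along $t$ is a union of intervals of length $1/(K|s_{\ell j}|)$ spaced $1/|s_{\ell j}|$ apart, i.e.\ density exactly $1/K$. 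One then needs to show that the union over $\ell$ of the bad sets misses a point. Within a fixed $\ell$, the bad sets should coincide (same rate, shifted phases), which is why equal rates are needed. So I would instead replace the line by a product structure: choose $y$ so that $A_1y$ ranges over a line with all-equal rates and then handle the others inductively.

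Concretely, I would run an induction on $K$ via nested scales, in the spirit of ABD. Choose $y=y_1+\varepsilon y_2+\dots$ with rapidly decreasing scales. At step $\ell$ we pick a region where constraint $\ell$ is satisfied with margin exceeding $1/(2K)$. Its coordinates then lie in an interval of length $1-1/K$ avoiding integers. Later perturbations move $A_\ell y$ by amounts small compared with the slack, using integrality of the $A_{\ell'}$ to keep earlier margins periodic. The accounting that the $K$ margins can each be $1/(2K)$ is where I expect the real difficulty. I would try first to distribute a total budget of $1$ among the $K$ steps, each consuming $1/K$ of the circle.
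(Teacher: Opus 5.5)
\textbf{There is a genuine gap: the proposal does not yet contain a proof.} The uniform-$y$ and equal-rate-line attempts fail for the reasons you give yourself. The final plan leaves open exactly the step you flag, namely why all $K$ margins can be $1/(2K)$.

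Moreover, the nested-scales scheme cannot work as described, because the bound is attained. Take $d=1$, $A_\ell=1$ and $c_\ell=\ell/K$. The admissible $y$ are exactly the midpoints $(2j+1)/(2K)$ modulo $1$, and at each of them some constraint holds with equality. Your procedure secures constraint $\ell$ with margin exceeding $1/(2K)$ and afterwards moves $A_\ell y$ only by amounts small compared with the remaining slack. It would therefore end with every constraint strict, which is impossible here.

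There is also a structural obstruction. The perturbations that leave $A_1y$ unchanged modulo $\Z^d$ form the lattice $A_1^{-1}\Z^d$, and these need not move $A_2y$ modulo $\Z^d$ at all. Take $A_2=A_1$ and $c_2=c_1+\tfrac12\one$; there the solutions are isolated points. Later blocks can then be repaired only by large moves that spend the margins of earlier blocks, and nothing in the proposal controls that trade-off. The $Kd$ constraints must be handled simultaneously.

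\textbf{The missing idea is a single global weighting of all rows.} The paper does not reprove this lemma; it quotes it from ABD. However, Appendix~\ref{app:approx} proves Lemma~\ref{lem:approx} without using it, and that argument yields the statement.
\begin{itemize}
\item Stack the blocks into $A\in\Z^{Kd\times d}$. The rows of each $A_\ell$ are independent, so every set $F$ of rows has $\rank(A_F)\ge\max_\ell|F\cap\text{block }\ell|\ge|F|/K$. Hence $\mu\ge1/K$.
\item For $\rho<\mu$, minimize $\log\det\bigl(\sum_ie^{t_i}a_ia_i^{\mathsf T}\bigr)-\rho\sum_it_i$ over $t\ge0$. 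This gives weights $w_i$ whose leverage scores $\theta_i=w_ia_i^{\mathsf T}M^{-1}a_i$, with $M=\sum_iw_ia_ia_i^{\mathsf T}$, are all at least $\rho$.
\item Take a minimizer of $\sum_iw_i(a_i^{\mathsf T}x-b_i-z_i)^2$ over $x\in\R^d$ and $z\in\Z^{Kd}$, with residual $r$. The exchange $z_i\mapsto z_i+\varepsilon$, $x\mapsto x+\varepsilon w_iM^{-1}a_i$ changes the objective by $w_i(1-\theta_i-2\varepsilon r_i)\ge0$ for $\varepsilon=\pm1$. So every residual satisfies $|r_i|\le(1-\rho)/2$.
\item Let $\rho\uparrow\mu$ by compactness, and take $b$ to be the stacked $c_\ell$ plus $\tfrac12\one$. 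This gives $\dist\bigl((A_\ell y-c_\ell)_j,\Z\bigr)\ge\mu/2\ge1/(2K)$.
\end{itemize}
The weights do the allocation of the ``budget'' you were trying to carry out block by block. They turn the fact that $\tfrac1K\one$ is the average of the indicator vectors of the $K$ block bases into a lower bound on the leverage of every row at once.
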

\end{samepage}

We also use Edmonds's matroid partition theorem~\cite{Edmonds}: a finite indexed family of vectors can be partitioned into $K$ linearly independent families if and only if every subfamily $X$ satisfies $|X|\le K\rank(X)$. Repeated vectors are allowed as separate indexed elements.

The following consequence is the only approximation statement needed for the modular orientation theorem. For any matrix $A$ whose rows have an index set $I$ and any $F\subseteq I$, let $A_F$ be its row restriction to $F$.

\begin{samepage}
\begin{lemma}[Shifted approximation]\label{lem:approx}
Let $A\in\Z^{m\times d}$ have $d\ge1$ independent columns and no zero rows. For $v\in\R^m$, write $\|v\|_\infty=\max_{i\in[m]}|v_i|$ for the infinity norm. Put
\[
 \mu=\min_{\varnothing\ne F\subseteq\{1,\ldots,m\}}
             \frac{\rank(A_F)}{|F|}.
\]
For every $b\in\R^m$, there exist $x\in\R^d$ and $z\in\Z^m$ such that
\begin{equation}\label{eq:approx}
 \|Ax-b-z\|_\infty\le\frac{1-\mu}{2}.
\end{equation}
\end{lemma}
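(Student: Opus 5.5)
The plan is to derive Lemma~\ref{lem:approx} from Lemma~\ref{lem:ABD}. Edmonds's matroid partition theorem will supply the nonsingular matrices. A replication-and-scaling trick will make the number $K$ of matrices match $1/\mu$ exactly, so that the ABD bound $1/(2K)$ becomes $\mu/2$ after rescaling.

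\emph{Setup.} Write $a_1,\ldots,a_m\in\Z^d$ for the rows of $A$. Since no row is zero, every nonempty $F$ has $\rank(A_F)\ge1$, so $\mu>0$. Taking $F$ to be a single row gives $\mu\le1$. Since $\mu$ is a ratio of integers, write $\mu=r/s$ with positive integers $r\le s$. Put $N=r$ and $K=s$, so that $K=N/\mu$.

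\emph{Partition into bases.} Form the indexed family $\mathcal X$ consisting of $N$ copies $(i,t)$, $t=0,\ldots,N-1$, of each row $a_i$. Let $X\subseteq\mathcal X$ have underlying row set $F$. Then
\[
 |X|\le N|F|\le \frac{N}{\mu}\rank(A_F)=K\rank(X).
\]
By Edmonds's theorem, $\mathcal X$ splits into $K$ linearly independent families. The columns of $A$ are independent, so $A$ has rank $d$. Hence each family can be completed to a basis of $\R^d$ by adding standard unit vectors $e_j$. Stacking each completed family as rows gives nonsingular integer $d\times d$ matrices $A_1,\ldots,A_K$.

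\emph{Choice of shifts.} In each $A_\ell$, give the row coming from the copy $(i,t)$ the shift coordinate $(b_i+t+\tfrac12)/N$. Give padding rows arbitrary shifts, say $0$. Lemma~\ref{lem:ABD} then yields $y\in\R^d$ such that for every $i$ and every $t$,
\[
 \dist\Bigl(a_iy-\frac{b_i+t+\frac12}{N},\Z\Bigr)\ge\frac1{2K}=\frac{\mu}{2N}.
\]
Put $x=Ny$ and $u_i=a_ix-b_i$. Multiplying by $N$, the displayed condition says that $u_i$ stays at distance at least $\mu/2$ from every point of $t+\tfrac12+N\Z$. As $t$ ranges over $0,\ldots,N-1$, these sets together cover all of $\tfrac12+\Z$. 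Therefore $\dist(u_i,\Z)\le\tfrac12-\tfrac{\mu}{2}=\tfrac{1-\mu}2$. Finally let $z_i$ be the integer nearest to $u_i$; this gives \eqref{eq:approx}.

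\emph{Main obstacle.} The delicate point is the mismatch between the ABD constant $1/(2K)$ and the target $\mu/2$ when $1/\mu$ is not an integer. Naive replication only gives the weaker bound $\mu/(2N)$. The fix is to scale by $N$ and give the $N$ copies of each row the staggered shifts $(t+\tfrac12)/N$. The two points to check are that these staggered shifts exactly tile the half-integers, and that the padding rows impose no constraint on $x$.
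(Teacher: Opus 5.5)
Your proposal is correct and is essentially the paper's proof. It uses the same ingredients: write $\mu$ as a ratio of integers, take as many copies of each row as the numerator, apply Edmonds's partition into as many independent families as the denominator, complete these to nonsingular integer matrices, give the copies staggered shifts $(b_i\pm\tfrac12+j)/a$, and apply Lemma~\ref{lem:ABD}. Your observation that the sets $t+\tfrac12+N\Z$ tile $\tfrac12+\Z$ is the same fact as the paper's identity $\min_{0\le j<a}\dist(t-j/a,\Z)=\tfrac1a\dist(at,\Z)$, combined with $\dist(t+\tfrac12,\Z)=\tfrac12-\dist(t,\Z)$.
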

\end{samepage}

\begin{proof}
The number $\mu$ is rational and lies in $(0,1]$; write $\mu=a/c$ with positive integers $a\le c$. Make $a$ labelled copies of every row of $A$. If $X$ is a subfamily of these copies and $F$ is its set of original row indices, then
\[
 |X|\le a|F|\le c\rank(A_F)=c\rank(X).
\]
Edmonds's theorem partitions the copies into $c$ independent families. Complete each family to a basis of $\R^d$ using integer row vectors, giving $c$ nonsingular integer matrices.

Write $u_i^{\mathsf T}$ for row $i$ of $A$. To its copy labelled $j\in\{0,\ldots,a-1\}$ assign the shift
\[
 \frac{b_i-\tfrac12+j}{a}.
\]
Assign arbitrary shifts to the added rows. Lemma~\ref{lem:ABD} gives a common $y\in\R^d$ for which every labelled row satisfies
\[
 \dist\left(u_i^{\mathsf T}y-\frac{b_i-\tfrac12+j}{a},\Z\right)
 \ge\frac1{2c}.
\]
The elementary identity
\[
 \min_{0\le j<a}\dist(t-j/a,\Z)=\frac1a\dist(at,\Z)
\]
therefore yields
\[
 \dist\bigl(a u_i^{\mathsf T}y-b_i+\tfrac12,\Z\bigr)
 \ge\frac{a}{2c}=\frac\mu2.
\]
Since $\dist(t+\tfrac12,\Z)=\tfrac12-\dist(t,\Z)$, setting $x=ay$ and choosing a nearest integer $z_i$ to $u_i^{\mathsf T}x-b_i$ proves~\eqref{eq:approx}.
\end{proof}

\section{Fractional packing and the cycle space}\label{sec:packing}

The fractional spanning-tree packing number of a connected graph is also called its strength and is computable in polynomial time~\cite{Cunningham}.

A \emph{fractional spanning-tree packing} assigns nonnegative weights $w_T$ to the spanning trees of $G$, subject to $\sum_{T\ni e}w_T\le1$ for every edge $e$. Its maximum total weight is denoted by $\tau_f(G)$. For a disconnected graph, set $\tau_f(G)=0$. Write $\tau_{\rm int}(G)$ for the maximum number of pairwise edge-disjoint spanning trees.

For a partition $\mathcal P$ of $V$ into nonempty parts, let $e_G(\mathcal P)$ count the edges joining different parts. The Nash--Williams--Tutte tree-packing theorem~\cite{NWtrees,Tutte} states that $G$ contains $t$ edge-disjoint spanning trees precisely when
\[
 e_G(\mathcal P)\ge t(|\mathcal P|-1)
 \quad\hbox{for every partition }\mathcal P.
\]
Consequently,
\begin{equation}\label{eq:packing}
 \tau_f(G)=\min_{|\mathcal P|\ge2}
           \frac{e_G(\mathcal P)}{|\mathcal P|-1},
 \qquad
 \tau_{\rm int}(G)=\lfloor\tau_f(G)\rfloor.
\end{equation}
Here is a direct justification of the fractional equality. Every spanning tree crosses a partition into $r$ parts at least $r-1$ times, so the partition minimum bounds every fractional packing from above. That minimum is rational; call it $s$ and choose a positive integer $L$ with $Ls$ integral. Replacing each edge by $L$ parallel copies gives a graph satisfying the partition inequalities for $Ls$ trees. Pack those trees and project them to $G$, assigning each weight $1/L$. Each original edge has total load at most one, and the total weight is $s$.

Two useful comparisons, for connected $G$, are
\begin{equation}\label{eq:lambda-tau}
 \frac{n\lambda(G)}{2(n-1)}\le\tau_f(G)\le\lambda(G).
\end{equation}
For the upper bound use a partition into two parts defining a minimum cut. For the lower bound, sum the cut sizes of the $r$ parts of any partition:
\[
 2e_G(\mathcal P)\ge r\lambda(G),
 \qquad
 \frac{r}{r-1}\ge\frac{n}{n-1}.
\]

We now express the same parameter as a rank condition. For connected $G$, fix a reference orientation and let $B\in\Z^{V\times E}$ be its incidence matrix, with $+1$ at the tail of an edge and $-1$ at its head. For an edge vector $f$, the vector $Bf$ is its \emph{boundary}, recording net outflow at each vertex. Thus a real circulation satisfies $Bf=0$, and the real \emph{cycle space} is $\ker B$. Fix a spanning tree. Each edge outside it forms a unique cycle with the tree path joining its endpoints. Choosing a direction around each such \emph{fundamental cycle} gives a column with entries $+1$ or $-1$ according to agreement with the reference orientation, and zero off the cycle. These columns form an integer matrix
\[
 U\in\Z^{E\times d},\qquad d=m-n+1,
 \qquad BU=0,\quad\im U=\ker B,
\]
whose columns are independent. For $F\subseteq E$, let $G-F$ be the spanning subgraph obtained by deleting $F$, and let $c(G-F)$ count its components, including isolated vertices. We abbreviate $G-\{e\}$ to $G-e$.

\begin{lemma}[Cycle-space rank]\label{lem:rank}
If $G$ is connected and $\tau=\tau_f(G)>1$, then $d\ge1$, $U$ has no zero rows, and
\begin{equation}\label{eq:rank}
 \rank(U_F)=|F|-c(G-F)+1,
 \qquad
 \min_{\varnothing\ne F\subseteq E}
       \frac{\rank(U_F)}{|F|}=1-\frac1\tau.
\end{equation}
\end{lemma}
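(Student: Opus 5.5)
The proof has three steps: the rank formula, the reduction of the ratio to partitions, and the easy consequences $d\ge1$ and no zero rows.

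\textbf{Rank formula.} Since $\im U=\ker B$ and $U$ has independent columns, the map $x\mapsto Ux$ identifies $\R^d$ with the cycle space. Hence $\rank(U_F)$ is the dimension of the image of $\ker B$ under restriction to the coordinates in $F$. The kernel of this restriction, inside $\ker B$, consists of the circulations supported on $E\setminus F$. That space is the cycle space of $G-F$, of dimension $(m-|F|)-n+c(G-F)$. Therefore
\[
\rank(U_F)=(m-n+1)-(m-|F|-n+c(G-F))=|F|-c(G-F)+1 .
\]

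\textbf{The ratio.} Put $\mathcal P$ for the partition of $V$ into the vertex sets of the components of $G-F$, and $r=c(G-F)$. Every edge of $e_G(\mathcal P)$ joins two different components, so it lies in $F$; thus $e_G(\mathcal P)\le |F|$. For $r\ge2$, \eqref{eq:packing} gives $e_G(\mathcal P)\ge\tau(r-1)$, so $r-1\le |F|/\tau$. This bound also holds trivially when $r=1$. Hence
\[
\frac{\rank(U_F)}{|F|}=1-\frac{r-1}{|F|}\ge 1-\frac1\tau .
\]
For equality, take a partition $\mathcal P$ attaining the minimum in \eqref{eq:packing} and let $F$ be the set of its crossing edges. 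The components of $G-F$ refine $\mathcal P$, so $c(G-F)-1\ge|\mathcal P|-1=|F|/\tau$. Combined with the reverse inequality, this forces equality, and the minimum equals $1-1/\tau$.

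\textbf{The easy claims.} For a single edge, $F=\{e\}$, the ratio bound gives $\rank(U_{\{e\}})\ge 1-1/\tau>0$ because $\tau>1$. So row $e$ of $U$ is nonzero, and in particular $d\ge1$. More directly, $\tau>1$ gives $m\ge\tau(n-1)>n-1$, so $d=m-n+1\ge1$.

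The main care point is the kernel-dimension step in the rank formula. One must note that a circulation of $G$ vanishing on $F$ is exactly a circulation of $G-F$, and use the standard formula $m'-n+c$ for the cycle-space dimension of a possibly disconnected graph, with isolated vertices counted as components.
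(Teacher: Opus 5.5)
Your proof is correct and follows essentially the same route as the paper. The rank formula comes from the same kernel-dimension count, and the ratio from the same comparison between the components of $G-F$ and a minimizing partition, which the paper first packages as the deletion formula $\tau=\min|F|/(c(G-F)-1)$. The $d\ge1$ and no-zero-rows claims are also handled as in the paper, via the singleton partition and the positive ratio bound.
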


\begin{proof}
The singleton partition gives $\tau\le m/(n-1)$, so $\tau>1$ implies $m\ge n$ and $d\ge1$. The kernel of the restriction of the cycle space to $F$ consists exactly of the circulations supported on $G-F$. Its dimension is
\[
 (m-|F|)-n+c(G-F).
\]
Subtracting this from $d$ proves the first equality in~\eqref{eq:rank}.

The partition formula also gives
\begin{equation}\label{eq:deletions}
 \tau=\min_{F:\,c(G-F)\ge2}\frac{|F|}{c(G-F)-1}.
\end{equation}
Indeed, the components of $G-F$ form a partition whose crossing edges all lie in $F$, so every displayed ratio is at least $\tau$. Conversely, take a minimizing partition with $r$ parts and delete all its crossing edges. Then $c(G-F)\ge r$, so the resulting ratio is at most $\tau$.

Combine~\eqref{eq:deletions} with the first equality in~\eqref{eq:rank}. Sets $F$ for which $G-F$ remains connected have rank ratio one; all others have minimum ratio $1-1/\tau$. This is positive, so in particular every row of $U$ is nonzero.
\end{proof}

Under the hypothesis $\tau=\tau_f(G)>1$, applying Lemma~\ref{lem:approx} to $U$ gives the precise estimate needed below: for every $b\in\R^E$, there are $x\in\R^d$ and $z\in\Z^E$ with
\begin{equation}\label{eq:cycle-approx}
 \|Ux-b-z\|_\infty\le\frac1{2\tau}.
\end{equation}

\section{From approximation to an orientation}\label{sec:orientation}

We first record the elementary rounding fact that converts a fractional choice of directions into an orientation.

\begin{lemma}[Rounding with fixed boundary]\label{lem:round}
If $X\in[0,1]^E$ and $BX\in\Z^V$, there exists $y\in\{0,1\}^E$ such that $By=BX$. Coordinates already equal to $0$ or $1$ can be kept fixed.
\end{lemma}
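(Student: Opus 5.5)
Let $F=\{e\in E: 0<X_e<1\}$ be the set of fractional coordinates; the plan is to shrink $F$ by repeated cycle-cancelling moves, never touching coordinates already in $\{0,1\}$ and never changing $BX$. The invariant is $X\in[0,1]^E$ with $BX$ fixed, and when $F=\varnothing$ we take $y=X$.

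The key step is to show that, when $F\ne\varnothing$, the subgraph $(V,F)$ contains a cycle. Each vertex has $(BX)_v\in\Z$. Edges outside $F$ contribute integers to $(BX)_v$, so the sum of the contributions $\pm X_e$ over edges $e\in F$ at $v$ is also an integer. Now suppose some vertex $v$ had exactly one incident edge in $F$. Its contribution would be a single non-integer $\pm X_e$, which is a contradiction. So every vertex of $(V,F)$ has degree $0$ or at least $2$ in $F$, and a nonempty such subgraph contains a cycle $C$. Parallel edges give cycles of length two, which is fine.

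With the cycle in hand, fix a direction of traversal around $C$ and let $g\in\{0,\pm1\}^E$ be its signed indicator relative to the reference orientation. Then $Bg=0$, so $B(X+tg)=BX$ for every real $t$, and $X+tg$ agrees with $X$ off $C\subseteq F$. We increase $t$ from $0$ in the positive (or negative) direction until the first coordinate of $C$ reaches $0$ or $1$. The minimal such $|t|$ is positive and finite, because every coordinate on $C$ lies strictly inside $(0,1)$. At that value all coordinates stay in $[0,1]$, and at least one edge of $C$ leaves $F$.

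Each move removes at least one edge from $F$ and adds none, so after at most $|E|$ moves the vector is integral, i.e.\ lies in $\{0,1\}^E$, with the same boundary. Coordinates that started at $0$ or $1$ were never in $F$, so they were never modified. The only step needing care is the degree argument that produces the cycle; the rest is routine.
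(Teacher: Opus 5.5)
Your proposal is correct and follows the paper's proof essentially step for step. Like the paper, you show that the fractional edges contain a cycle because a vertex meeting exactly one fractional edge would have a non-integral boundary, and you then push along the oriented cycle vector until a coordinate hits $0$ or $1$, which removes at least one edge from $F$ each time.
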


\begin{proof}
Consider the edges for which $X_e\notin\{0,1\}$. If this subgraph has an edge but no cycle, it has a leaf. At that vertex all but one incident contribution to $BX$ are integral, contradicting $BX\in\Z^V$. Thus, unless $X$ is already integral, the fractional edges contain a cycle. Choose a direction around it and let $\gamma\in\{-1,0,1\}^E$ be its oriented cycle vector: entries are $+1$ for agreement with the reference direction, $-1$ for disagreement, and zero off the cycle. Then $B\gamma=0$. Increase $t\ge0$ in $X+t\gamma$ until a coordinate first reaches $0$ or $1$. All coordinates remain in $[0,1]$, the boundary is unchanged, and the number of fractional edges decreases. Repeating finishes in at most $m$ steps. A pair of parallel edges may form a cycle of length two.
\end{proof}

\begin{theorem}[Modular Nash--Williams, fractional form]\label{thm:fractional}
Let $q\ge1$ be an integer, let $G$ be connected, and suppose $\tau=\tau_f(G)>q$. Every compatible prescription $\pi$ has an orientation $D$ such that
\begin{equation}\label{eq:quantitative}
 d_D^+(v)\equiv\pi(v)\pmod q,
 \qquad
 d_D^\pm(S)\ge
 \left\lceil\frac{|\delta_G(S)|}{2}
                  \left(1-\frac q\tau\right)\right\rceil
\end{equation}
for every vertex $v$ and every nonempty proper set $S$. In particular, for an integer $k\ge1$, the orientation is $k$-arc-connected whenever
\begin{equation}\label{eq:master}
 \lambda(G)\left(1-\frac q{\tau_f(G)}\right)>2k-2.
\end{equation}
\end{theorem}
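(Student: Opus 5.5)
The plan is to build a fractional orientation near the ``halfway'' point $\tfrac12\one$ whose boundary is integral with the prescribed residues, and then round it with Lemma~\ref{lem:round}. Fix the reference orientation and incidence matrix $B$. Encode an orientation by $y\in\{0,1\}^E$, where $y_e=1$ means that $e$ keeps its reference direction. Then
\[
 d_D^+(v)=(By)_v+d^-_{\rm ref}(v),
\]
where $d^-_{\rm ref}(v)$ is the reference indegree of $v$. So the congruence in~\eqref{eq:quantitative} says that $(By)_v\equiv \pi(v)-d^-_{\rm ref}(v)\pmod q$.

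\textbf{Step 1 (integral target).} Summing the target residues gives $\sum_v\pi(v)-m\equiv0\pmod q$ by~\eqref{eq:compatible}. Hence we may choose an integer vector $t\in\Z^V$ with $\sum_v t_v=0$ and $t_v\equiv\pi(v)-d^-_{\rm ref}(v)\pmod q$. Since $G$ is connected, $\im B$ contains every integer vector with coordinate sum zero (push units along tree paths). So there is $h\in\Z^E$ with $Bh=t$.

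\textbf{Step 2 (approximation).} We seek $X=h-qz+qUx$ with $z\in\Z^E$ and $x\in\R^d$. Because $BU=0$, any such $X$ has $BX=t-qBz$, which is integral with the correct residues modulo $q$. We want $X$ close to $\tfrac12\one$, which amounts to
\[
 Ux-z\approx b:=\frac{\tfrac12\one-h}{q}.
\]
Since $\tau>q\ge1$, Lemma~\ref{lem:rank} applies, and~\eqref{eq:cycle-approx} provides $x,z$ with $\|Ux-b-z\|_\infty\le 1/(2\tau)$. Multiplying by $q$ gives
\[
 \|X-\tfrac12\one\|_\infty\le \frac q{2\tau}<\frac12 .
\]
In particular $X\in[0,1]^E$, and each coordinate lies in $\bigl[\tfrac12(1-q/\tau),\,\tfrac12(1+q/\tau)\bigr]$.

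\textbf{Step 3 (rounding and cuts).} Lemma~\ref{lem:round} gives $y\in\{0,1\}^E$ with $By=BX$, and this $y$ defines $D$; the congruences hold by Step~1. For a cut $S$, sum the boundary over $S$: the internal edges cancel, so $\sum_{v\in S}(By)_v$ equals $d^+_D(S)-d^-_D(S)$. The same sum computed from $X$ equals the fractional outflow minus the fractional inflow across $\delta_G(S)$. Both the integral and the fractional crossing counts add up to $|\delta_G(S)|$, so $d^+_D(S)$ equals the fractional outflow of $X$ across the cut. That outflow is a sum of $|\delta_G(S)|$ terms, each equal to $X_e$ or $1-X_e$, and each at least $\tfrac12(1-q/\tau)$. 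Since $d^+_D(S)$ is an integer, the ceiling bound follows, and symmetrically for $d^-_D(S)$. Finally, \eqref{eq:master} together with $|\delta_G(S)|\ge\lambda(G)$ makes the bound exceed $k-1$, hence it is at least $k$.

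The main obstacle is Step~2. One has to see that the two freedoms, real cycle-space shifts $Ux$ and integer multiples $qz$ of edge shifts, combine into exactly the shape required by Lemma~\ref{lem:approx}. This is handled by scaling by $q$, which converts the per-edge error $1/(2\tau)$ into $q/(2\tau)$. The cut-preservation in Step~3 is then a cancellation argument over the shore.
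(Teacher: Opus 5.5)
Your proposal is correct and follows the paper's proof essentially step for step. It uses the same sum-zero integral lift, the same shift $b=(\tfrac12\one-h)/q$ in \eqref{eq:cycle-approx}, the same rounding by Lemma~\ref{lem:round}, and the same cancellation over a shore; your per-edge bound $X_e,\,1-X_e\ge\tfrac12(1-q/\tau)$ is just a rephrasing of the paper's bound on $|d_D^+(S)-d_D^-(S)|$.

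One small slip, which does not affect the argument: $\sum_{v\in S}(By)_v$ is not literally $d_D^+(S)-d_D^-(S)$, because the boundary of $D$ is $B(2y-\one)$ rather than $By$. However, $By=BX$ gives $B(2y-\one)=B(2X-\one)$, so your conclusion that $d_D^+(S)$ equals the fractional outflow of $X$ still holds.
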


\begin{proof}
Let $d_0^-(v)$ be the indegree in the reference orientation. Choose an integer vector $h\in\Z^V$ with
\[
 h_v\equiv\pi(v)-d_0^-(v)\pmod q,
 \qquad \sum_v h_v=0.
\]
Such a choice exists: compatibility makes the sum of any integer representatives a multiple of $q$, which can be subtracted at a single vertex.

Since $G$ is connected, there is an integer vector $a\in\Z^E$ with $Ba=h$. To see this, fix a root and route the integer amount $h_v$ from each nonroot vertex $v$ to the root along a spanning tree, using signs relative to the reference orientation. The resulting boundary is $h$ at every vertex.

Apply~\eqref{eq:cycle-approx} with
\[
 b=\frac{\tfrac12\one-a}{q}.
\]
Choose $x,z$ accordingly and set
\begin{equation}\label{eq:fractional-direction}
 X=\tfrac12\one+q(Ux-b-z)=a+qUx-qz.
\end{equation}
The approximation bound gives
\begin{equation}\label{eq:central}
 \left|X_e-\tfrac12\right|\le\frac q{2\tau}
 \qquad(e\in E).
\end{equation}
Since $\tau>q$, this places $X$ in the open cube $(0,1)^E$. Also $BU=0$, so
\[
 BX=h-qBz\in\Z^V,\qquad BX\equiv h\pmod q.
\]
Lemma~\ref{lem:round} gives $y\in\{0,1\}^E$ with $By=BX$. Orient an edge as in the reference orientation when $y_e=1$, and oppositely when $y_e=0$. Starting from the reversed reference orientation, switching an edge back to its reference direction adds its column of $B$ to the outdegree vector. Hence
\[
 d_D^+(v)=d_0^-(v)+(By)_v
          \equiv d_0^-(v)+h_v\equiv\pi(v)\pmod q.
\]

It remains to check the directed cuts. Fix $S$ and put $C=\delta_G(S)$. For $e\in C$, set $\sigma_e=1$ if its reference direction leaves $S$, and $\sigma_e=-1$ otherwise. Summing $B(y-X)=0$ over $S$ gives
\[
 \sum_{e\in C}\sigma_e(y_e-X_e)=0.
\]
Therefore~\eqref{eq:central} implies
\begin{align*}
 \bigl|d_D^+(S)-d_D^-(S)\bigr|
 &=\left|\sum_{e\in C}\sigma_e(2y_e-1)\right|\\
 &=\left|\sum_{e\in C}\sigma_e(2X_e-1)\right|
 \le\frac q\tau|C|.
\end{align*}
Since $d_D^+(S)+d_D^-(S)=|C|$, each directed count is at least $|C|(1-q/\tau)/2$. The counts are integers, proving~\eqref{eq:quantitative}. Under~\eqref{eq:master}, this lower bound exceeds $k-1$ on every nontrivial cut, so both counts are at least $k$.
\end{proof}

\section{Recovering the two hypotheses}\label{sec:hypotheses}

We can now prove Theorem~\ref{thm:main} and state its connection with fractional packing explicitly. Write $\lambda=\lambda(G)$ and $\tau=\tau_f(G)$.

\paragraph{Edge-connectivity.}
If $\lambda\ge2q+2k-2$, inequality~\eqref{eq:lambda-tau} gives
\[
 \lambda\left(1-\frac q\tau\right)
 \ge\lambda-\frac{2q(n-1)}n
 \ge2k-2+\frac{2q}n>2k-2.
\]
In particular $\tau>q$, and Theorem~\ref{thm:fractional} supplies the required orientation. The term $2q/n$ explains why the non-strict connectivity assumption yields the strict inequality needed for the final integer rounding of the cut bound.

\paragraph{Edge-disjoint spanning trees.}
If $G$ contains $t=q+2k-1$ edge-disjoint spanning trees, then $\tau\ge t$. Also $\lambda\ge t$, because every tree meets every nontrivial cut. Hence
\[
 \lambda\left(1-\frac q\tau\right)\ge t-q=2k-1>2k-2,
\]
and the same theorem applies. This completes the proof of Theorem~\ref{thm:main}.

\paragraph{A condition using only fractional packing.}
By~\eqref{eq:packing}, the tree-packing hypothesis is exactly $\tau_f(G)\ge q+2k-1$. The argument also gives the following sufficient condition with a real threshold:
\begin{corollary}\label{cor:tau}
Let $q,k\ge1$ be integers. If $\tau_f(G)>q+2k-2$, every compatible prescription modulo $q$ is realized by a $k$-arc-connected orientation.
\end{corollary}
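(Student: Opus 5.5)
The plan is to deduce Corollary~\ref{cor:tau} from Theorem~\ref{thm:fractional}, exactly as in the two preceding paragraphs. The only work is to verify the master inequality~\eqref{eq:master} using $\tau=\tau_f(G)$ alone. First, $\tau>q+2k-2\ge q\ge 1$ gives $\tau_f(G)>0$, so $G$ is connected, because $\tau_f$ was defined to be zero for disconnected graphs. It also gives $\tau>q$, so Theorem~\ref{thm:fractional} applies and every compatible $\pi$ has a realization satisfying~\eqref{eq:quantitative}.

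Next, I use the comparison~\eqref{eq:lambda-tau}, $\lambda(G)\ge\tau$, in the left side of~\eqref{eq:master}. Since $1-q/\tau>0$, the map $\lambda\mapsto\lambda(1-q/\tau)$ is increasing, so
\[
 \lambda(G)\left(1-\frac q\tau\right)\ge \tau\left(1-\frac q\tau\right)=\tau-q>2k-2.
\]
This is exactly~\eqref{eq:master}, and Theorem~\ref{thm:fractional} then gives that the realizing orientation is $k$-arc-connected.

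There is no genuine obstacle here. The two points needing care are the ones above: the strict threshold $\tau>q+2k-2$ has to yield the strict inequality in~\eqref{eq:master} (it does, since $\tau-q$ inherits strictness), and connectivity has to follow from positivity of $\tau_f$, since Theorem~\ref{thm:fractional} assumes $G$ connected. I also note that for $k=1$ the hypothesis reads $\tau_f(G)>q$, which recovers the strong-connectivity statement directly.
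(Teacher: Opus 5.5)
Your proposal is correct and is essentially the paper's own proof: it uses $\lambda(G)\ge\tau_f(G)>q$ to obtain $\lambda(1-q/\tau)\ge\tau-q>2k-2$, which is~\eqref{eq:master}, and then applies Theorem~\ref{thm:fractional}. Your explicit check that $\tau_f(G)>0$ forces $G$ to be connected is a small detail the paper leaves implicit.
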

\begin{proof}
Since $\lambda\ge\tau>q$, we have
\[
 \lambda\left(1-\frac q\tau\right)\ge\tau-q>2k-2.
\]
Apply Theorem~\ref{thm:fractional}.
\end{proof}

\section{Prescribed residues: Hasanvand's conjecture}\label{sec:residues}

A \emph{balanced boundary} modulo $q$ is a function $\beta:V\to\Z/q\Z$ with $\sum_v\beta(v)=0$. For odd $q\ge3$, prescribing
\[
 d_D^+(v)-d_D^-(v)\equiv\beta(v)\pmod q
\]
is equivalent to prescribing outdegrees $\pi(v)=(\deg_G(v)+\beta(v))/2$ modulo $q$, where division by two means multiplication by its inverse in $\Z_q$; this prescription is compatible. A graph is \emph{strongly $\Z_q$-connected} if every balanced boundary has such a realizing orientation. This terminology does not require that the realizing orientation be directed strongly connected.

\begin{theorem}[Prescribed residues]\label{thm:ordinary-residues}
Let $q\ge2$ be an integer. If $\tau_f(G)\ge q$, every compatible prescription modulo $q$ has a realizing orientation. In particular, $q$ edge-disjoint spanning trees suffice. For $q=2$, connectedness alone suffices. For odd $q$, these hypotheses imply strong $\Z_q$-connectivity.
\end{theorem}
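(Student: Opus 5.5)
The plan is to rerun the proof of Theorem~\ref{thm:fractional} and check that the strict inequality $\tau>q$ was used only to obtain the lower bound on directed cuts. Here no cut bound is required, so the weak inequality $\tau_f(G)\ge q$ should be enough.

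Assume $\tau=\tau_f(G)\ge q\ge2$. Then $G$ is connected and $\tau>1$, so Lemma~\ref{lem:rank} applies. The approximation~\eqref{eq:cycle-approx} therefore holds with error bound $1/(2\tau)$.

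Now repeat the construction of Theorem~\ref{thm:fractional} step by step:
\begin{itemize}
\item choose an integer $h\in\Z^V$ with $\sum_v h_v=0$ and $h_v\equiv\pi(v)-d_0^-(v)\pmod q$;
\item route $h$ along a spanning tree to get $a\in\Z^E$ with $Ba=h$;
\item take $b=(\tfrac12\one-a)/q$ and define $X=a+qUx-qz$ as in~\eqref{eq:fractional-direction}.
\end{itemize}
The estimate~\eqref{eq:central} now reads $|X_e-\tfrac12|\le q/(2\tau)\le\tfrac12$. So $X$ lies in the closed cube $[0,1]^E$ rather than the open one. We still have $BX=h-qBz\in\Z^V$ with $BX\equiv h\pmod q$.

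Lemma~\ref{lem:round} only needs $X\in[0,1]^E$, and it explicitly allows coordinates already equal to $0$ or $1$. It therefore produces $y\in\{0,1\}^E$ with $By=BX$. The orientation defined from $y$ then satisfies $d_D^+(v)\equiv\pi(v)\pmod q$, by the same computation as before.

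The only point needing care is this boundary case $\tau=q$, where some $X_e$ may be exactly $0$ or $1$. I expect this to be the main thing to check, and it is harmless: the rounding lemma fixes such coordinates and loses nothing. The claim about $q$ edge-disjoint spanning trees follows at once, since they give $\tau_f(G)\ge q$ by~\eqref{eq:packing}.

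For $q=2$ with $G$ merely connected, I would give a separate elementary argument, since $\tau_f$ may then be below $2$.
\begin{itemize}
\item Start from any orientation and call a vertex \emph{wrong} if its outdegree has the wrong parity.
\item By compatibility the number of wrong vertices is even.
\item Pair them up, and for each pair reverse every edge of the path joining them in a fixed spanning tree.
\item Reversing a path changes the outdegree of each endpoint by $\pm1$ and leaves interior vertices unchanged. Equivalently, work modulo~$2$ with symmetric differences of tree paths.
\end{itemize}
This corrects all parities.

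For odd $q$, let $\beta$ be a balanced boundary. Set $\pi(v)=(\deg_G(v)+\beta(v))/2$, where $1/2$ means the inverse of $2$ in $\Z_q$. Then
\[
 \sum_v\pi(v)=\tfrac12\Bigl(2m+\sum_v\beta(v)\Bigr)=m,
\]
so $\pi$ is compatible. A realization satisfies $d_D^+(v)-d_D^-(v)=2d_D^+(v)-\deg_G(v)\equiv\beta(v)\pmod q$. This gives strong $\Z_q$-connectivity.
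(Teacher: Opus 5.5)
Your proposal is correct and follows the paper's proof essentially verbatim. You rerun the construction of Theorem~\ref{thm:fractional}, observe that $\tau\ge q$ still places $X$ in the closed cube $[0,1]^E$ so that Lemma~\ref{lem:round} applies, and for odd $q$ convert balanced boundaries to compatible outdegree residues. Your $q=2$ argument, correcting parities along spanning-tree paths, is the same as the paper's routing of an integer lift on a tree followed by reduction modulo two.

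One small wording fix: reversing a tree path that is not a directed path can change an interior vertex's outdegree by $\pm2$. So it is the parity, not the outdegree, that stays unchanged there, and parity is all your argument uses.
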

\begin{proof}
Put $\tau=\tau_f(G)\ge q\ge2$. The construction through~\eqref{eq:fractional-direction} only requires $\tau>1$. Its error bound~\eqref{eq:central} gives $|X_e-1/2|\le q/(2\tau)\le1/2$, so $X\in[0,1]^E$ even when $\tau=q$. The boundary $BX$ is integral and encodes the prescribed residues after adding the reference indegrees. Lemma~\ref{lem:round} gives a binary vector with the same boundary, which determines the required orientation. The tree conclusion follows from~\eqref{eq:packing}. At $q=2$, let $d_0^-$ be the reference indegree vector and choose $h\in\Z^V$ with $\sum_v h_v=0$ and $h\equiv\pi-d_0^-\pmod2$. Route $h$ along a spanning tree and reduce each integer edge coefficient modulo two. The resulting binary vector already has the required boundary modulo two. The odd-modulus conclusion follows from the preceding conversion of boundaries to outdegrees.
\end{proof}

For $q\ge3$, the tree conclusion is Hasanvand's prescribed-residue conjecture~\cite[Conjecture~3]{Hasanvand2016}. For residue realization alone, it reduces the earlier sufficient tree count $2q-2$ of Hasanvand~\cite[Theorem~1.6]{Hasanvand2024} to $q$; his theorem additionally controls the outdegrees. At modulus three, it reduces the four-tree bound of Han, Lai and Li~\cite{HLL} to three.

\section{Strongly connected realizations and small cuts}\label{sec:strong}

The fractional criterion first gives an exact characterization of directed strongly connected realizations. At $q=1$ this is the bridgeless orientation theorem. At $q=2$ it is the strong-connectivity case of the parity-orientation theorem of Frank and Kir\'aly~\cite{FK}. The proof below gives a single argument for all moduli.

\begin{theorem}[Strongly connected realizations]\label{thm:strong-exact}
Let $q\ge1$ be an integer and let $G$ be connected. The following are equivalent.
\begin{enumerate}
\item Every compatible prescription modulo $q$ has a strongly connected realization.
\item $e_G(\mathcal P)>q(|\mathcal P|-1)$ for every vertex partition $\mathcal P$ with at least two parts.
\item $\tau_f(G)>q$.
\item $G-e$ contains $q$ edge-disjoint spanning trees for every edge $e$.
\end{enumerate}
\end{theorem}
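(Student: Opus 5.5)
We prove the cycle of implications (3)$\Rightarrow$(1)$\Rightarrow$(2)$\Rightarrow$(3), and separately (2)$\Leftrightarrow$(4).

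\textbf{(3)$\Rightarrow$(1).} This is Theorem~\ref{thm:fractional} with $k=1$. For $\tau>q$ the bound~\eqref{eq:quantitative} is $\lceil |\delta_G(S)|(1-q/\tau)/2\rceil$. Since $G$ is connected, every nontrivial cut is nonempty, so this quantity is strictly positive and its ceiling is at least one. Hence the realization is strongly connected.

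\textbf{(2)$\Leftrightarrow$(3).} By~\eqref{eq:packing}, $\tau_f(G)$ is the minimum of $e_G(\mathcal P)/(|\mathcal P|-1)$, and this minimum is attained since there are finitely many partitions. So $\tau_f>q$ exactly when every such ratio exceeds $q$, which is (2).

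\textbf{(2)$\Leftrightarrow$(4).} Apply the Nash--Williams--Tutte theorem to $G-e$. Deleting $e$ lowers $e_G(\mathcal P)$ by one exactly when $e$ crosses $\mathcal P$. If (2) holds, then $e_{G-e}(\mathcal P)\ge e_G(\mathcal P)-1\ge q(|\mathcal P|-1)$, so $G-e$ has $q$ disjoint spanning trees. Conversely, if $e_G(\mathcal P)\le q(|\mathcal P|-1)$ for some $\mathcal P$ with at least two parts, then $G$ connected means some edge $e$ crosses $\mathcal P$. Deleting it gives $e_{G-e}(\mathcal P)<q(|\mathcal P|-1)$, so (4) fails.

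\textbf{(1)$\Rightarrow$(2): the main obstacle.} Suppose some partition $\mathcal P=\{V_1,\dots,V_r\}$ with $r\ge2$ has $e_G(\mathcal P)\le q(r-1)$. We must build a compatible $\pi$ with no strongly connected realization.

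In a strongly connected orientation, contracting each part gives a strongly connected multidigraph on $r$ vertices. Hence each part $V_i$ has at least one leaving crossing edge and at least one entering crossing edge. Write $o_i$ for the number of crossing edges leaving $V_i$ and $c_i$ for the number of crossing edges incident to $V_i$. For every realization,
\[
 \sum_{v\in V_i}d_D^+(v)=|E(V_i)|+o_i,
\]
so the prescribed sum $\sum_{v\in V_i}\pi(v)$ fixes $o_i$ modulo $q$.

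Choose $\pi$ so that this forces $o_i\equiv0\pmod q$ for $i<r$, placing all remaining compatibility slack on $V_r$. Each $o_i$ then lies in $\{0,q,2q,\dots\}\cap[0,c_i]$. Strong connectivity requires $1\le o_i\le c_i-1$, which forces $o_i\ge q$ and $c_i\ge q+1$ for every $i<r$.

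Summing the outcounts over all parts gives
\[
 \sum_{i=1}^r o_i=e_G(\mathcal P)\le q(r-1).
\]
Together with $o_i\ge q$ for $i<r$ and $o_r\ge1$, this yields
\[
 q(r-1)+1\le\sum_i o_i\le q(r-1),
\]
a contradiction. So no realization of this $\pi$ is strongly connected.

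The care needed is in checking that this $\pi$ exists and is compatible. The sums over $V_1,\dots,V_{r-1}$ are prescribed freely, and $V_r$ absorbs the residue needed for~\eqref{eq:compatible}; the individual values within each part are otherwise arbitrary. In the case $q=1$ the congruence is vacuous, but the counting still gives $r\le e_G(\mathcal P)\le r-1$, which is impossible, as required.
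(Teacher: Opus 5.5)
Your proof is correct and follows the paper's route: Theorem~\ref{thm:fractional} for (3)$\Rightarrow$(1), the partition formula~\eqref{eq:packing} for (2)$\Leftrightarrow$(3), Nash--Williams--Tutte applied to $G-e$ for the tree condition, and a residue-forcing prescription on the parts for (1)$\Rightarrow$(2). The only difference is a small simplification in that last step: you force $o_i\equiv0$ on all parts but one and use merely $o_r\ge1$ on the last, whereas the paper chooses $a_i\in\{1,\dots,q\}$ with $\sum_i a_i=e_G(\mathcal P)+q$, which needs the supporting bound $r\le e_G(\mathcal P)+q\le rq$.
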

\begin{proof}
The partition formula~\eqref{eq:packing} gives (2)$\Leftrightarrow$(3), and Theorem~\ref{thm:fractional} gives (3)$\Rightarrow$(1). To see that (1) implies (2), suppose a partition $\mathcal P=\{V_1,\ldots,V_r\}$ into $r\ge2$ parts has $M\le q(r-1)$ crossing edges. Contracting each part gives a connected graph on $r$ vertices with $M$ non-loop edges, so $M\ge r-1$, hence $r\le M+q\le rq$. We can therefore choose integers $a_i\in\{1,\ldots,q\}$ with $\sum_i a_i=M+q$. Let $E_G(V_i)$ denote the edges with both endpoints in $V_i$. Prescribe vertex residues whose sum on $V_i$ is $|E_G(V_i)|+a_i$ modulo $q$. Their total is compatible. A strongly connected realization has positive outgoing cut count at each part, congruent to $a_i$, hence at least $a_i$. Summing over the parts would give $M\ge M+q$, a contradiction.

For the last equivalence, $\tau_f>q$ means that each $r$-part partition has at least $q(r-1)+1$ crossing edges, so one deletion leaves the packing inequalities. Conversely, delete a crossing edge of any fixed partition and apply those inequalities.
\end{proof}

\paragraph{Recognition.}
The edge-deletion criterion is high $q$-tree-connectivity, for which Kir\'aly~\cite{Kiraly} gives a dedicated recognition algorithm. For completeness, one can also test the criterion by matroid union. If $q(n-1)>m-1$, reject immediately. Otherwise, for each edge $e$ use matroid union~\cite{Edmonds} to compute the maximum number of edges of $G-e$ that can be partitioned into $q$ forests. This maximum is $q(n-1)$ exactly when there are $q$ edge-disjoint spanning trees, since each of the $q$ forests must then have $n-1$ edges. The preliminary check bounds the number of forests polynomially even when $q$ is given in binary. This algorithm recognizes the universal property. Once a fractional witness is available, Lemma~\ref{lem:round} constructs its orientation in at most $m$ cycle roundings; no polynomial-time method for finding that witness is established here. Appendix~\ref{app:approx} explains what the exchange argument does construct for fixed weights. Frank, Jord\'an and Szigeti~\cite{FJS} posed the separate problem of deciding whether a single compatible parity prescription has a strongly connected realization.

Chen and Ning~\cite{ChenNing} proved that the following hypothesis guarantees $\Z_3$-connectivity. The fractional criterion also gives directed strong connectivity for the realizing orientations.

\begin{theorem}[Five cuts of size five]\label{thm:five-cuts}
Every five-edge-connected graph with at most five distinct cuts of size five is strongly $\Z_3$-connected. Every balanced boundary can in fact be realized by a directed strongly connected orientation.
\end{theorem}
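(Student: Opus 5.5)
The plan is to verify the fractional criterion $\tau_f(G)>3$ and then invoke Theorem~\ref{thm:strong-exact} with $q=3$. By the implication (3)$\Rightarrow$(1), every compatible outdegree prescription modulo $3$ then has a strongly connected realization. By the conversion in Section~\ref{sec:residues}, a balanced boundary $\beta$ modulo $3$ corresponds to the compatible prescription $\pi(v)=(\deg_G(v)+\beta(v))/2$, where division by two is multiplication by $2^{-1}=2$ in $\Z_3$. So a strongly connected realization of $\pi$ realizes $\beta$. This gives both strong $\Z_3$-connectivity and the directed strong connectivity claim. Everything therefore reduces to the partition inequality (2) of Theorem~\ref{thm:strong-exact}: $e_G(\mathcal P)\ge 3(|\mathcal P|-1)+1$ for every partition with $r=|\mathcal P|\ge2$ parts.

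For this I will double count. Write $\mathcal P=\{V_1,\ldots,V_r\}$. Every crossing edge lies in exactly two of the cuts $\delta_G(V_i)$, so
\[
 2e_G(\mathcal P)=\sum_{i=1}^r|\delta_G(V_i)|.
\]
If $r=2$, then $e_G(\mathcal P)=|\delta_G(V_1)|\ge5>3$ and we are done. For $r\ge3$, the cuts $\delta_G(V_i)$ are pairwise distinct as edge sets. Indeed, a nontrivial cut determines its shore up to complementation, and $V_j$ cannot be the complement of $V_i$ when a third part exists. Hence at most five indices have $|\delta_G(V_i)|=5$, and every other index has $|\delta_G(V_i)|\ge6$ by five-edge-connectivity. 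This gives
\[
 2e_G(\mathcal P)\ge5r+\max(0,r-5).
\]

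It remains to check this against $e_G(\mathcal P)\ge3r-2$, splitting at $r=5$. For $3\le r\le4$, we have $5r/2\ge3r-2$, since this rearranges to $r\le4$. For $r\ge5$, the bound reads $2e_G(\mathcal P)\ge6r-5$, so $e_G(\mathcal P)\ge3r-5/2$, and integrality gives $e_G(\mathcal P)\ge3r-2$. In all cases $e_G(\mathcal P)>3(r-1)$. By~\eqref{eq:packing} this is $\tau_f(G)>3$, which finishes the argument.

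The only delicate point is the distinctness of the five-cuts coming from different parts. This is exactly where $r=2$ must be handled separately, because there $V_1$ and $V_2$ define the same cut. The rest is arithmetic.
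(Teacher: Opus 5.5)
Your proposal is correct and follows the paper's proof essentially step for step. You show that the part cuts are distinct for $r\ge3$, double count to get $2e_G(\mathcal P)\ge6r-5$, and use integrality to get $e_G(\mathcal P)\ge3r-2>3(r-1)$, hence $\tau_f(G)>3$; then you apply Theorem~\ref{thm:strong-exact} together with the odd-modulus boundary conversion. Your separate case $3\le r\le4$ is harmless but unnecessary: at most $\min(r,5)\le5$ parts have cuts of size five, so the bound $6r-5$ holds directly for every $r\ge3$.
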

\begin{proof}
For a two-part partition, the crossing count is at least five and hence greater than three. For a partition $\mathcal P=\{V_1,\ldots,V_r\}$ with $r\ge3$, the cuts $\delta_G(V_i)$ are distinct: equality for two parts would give $\delta_G(V_i\cup V_j)=\varnothing$, contradicting connectedness. At most five have size five, and every other one has size at least six. Thus
\[
 2e_G(\mathcal P)=\sum_i|\delta_G(V_i)|\ge6r-5,
 \qquad e_G(\mathcal P)\ge3r-2>3(r-1).
\]
It follows that $\tau_f(G)>3$. Apply Theorem~\ref{thm:strong-exact} and the odd-modulus boundary conversion. Cuts are counted as edge sets, so complementary shores count once.
\end{proof}

\section{Star decompositions}\label{sec:stars}

The star $K_{1,q}$ has one centre joined to $q$ leaves. A decomposition into such stars partitions the edge set into subgraphs isomorphic to $K_{1,q}$.

\begin{theorem}[Star decompositions]\label{thm:stars}
Let $q\ge2$ be an integer. If a simple graph has $q$ edge-disjoint spanning trees and $q\mid |E|$, it decomposes into copies of $K_{1,q}$. For $q=2$, connectedness and even $|E|$ suffice.
\end{theorem}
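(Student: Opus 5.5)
The plan is to reduce the star decomposition to an orientation with every outdegree divisible by $q$, and then obtain that orientation from Theorem~\ref{thm:ordinary-residues}. Suppose $D$ is an orientation of a simple graph $G$ with $d_D^+(v)\equiv0\pmod q$ for every vertex $v$. At each vertex $v$, split the $d_D^+(v)$ out-arcs into groups of exactly $q$. Each group consists of $q$ edges joining $v$ to $q$ distinct neighbours, because $G$ is simple. So each group is a copy of $K_{1,q}$ centred at $v$. Every edge has exactly one tail, so these stars partition $E$. This is the only place simplicity is used: with parallel edges, a group could repeat a leaf.

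It remains to produce such an orientation. Take the prescription $\pi\equiv0$. It is compatible exactly when $\sum_v 0\equiv m\pmod q$, which is the hypothesis $q\mid|E|$.

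For $q\ge2$, the graph has $q$ edge-disjoint spanning trees. By \eqref{eq:packing} this gives $\tau_f(G)\ge q$, so Theorem~\ref{thm:ordinary-residues} supplies a realizing orientation. For $q=2$, the same theorem says connectedness alone suffices, and $\pi\equiv0$ is compatible precisely when $|E|$ is even.

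There is essentially no obstacle beyond checking compatibility and the simplicity remark. The only point that needs care is that Theorem~\ref{thm:ordinary-residues} covers the endpoint $\tau_f(G)=q$, and it does, via the closed-cube version of the rounding argument. If the graph has no edges, the decomposition is empty and the statement holds trivially.
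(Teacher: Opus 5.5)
Your proposal is correct and follows the paper's own proof. You prescribe outdegree $0$ modulo $q$, which is compatible exactly because $q\mid|E|$, and obtain it from Theorem~\ref{thm:ordinary-residues}, using connectedness alone when $q=2$. You then split each vertex's out-edges into groups of $q$, which are stars because the graph is simple.
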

\begin{proof}
Prescribe outdegree zero modulo $q$ using Theorem~\ref{thm:ordinary-residues}, and group each vertex's outgoing edges into sets of $q$. Each group is a star because the graph is simple, and every edge belongs to exactly one group.
\end{proof}
This proves the star case of Bar\'at--Gerbner's packing-decomposition conjecture~\cite[Conjecture~6]{BG}.

\section{Group connectivity and integer flows}\label{sec:flows}

Fix a reference orientation with incidence matrix $B$. A \emph{flow} is an edge vector $f$ satisfying conservation, $Bf=0$. Its values may be real, integer, or elements of $\Z_q=\Z/q\Z$; it is \emph{nowhere-zero} when every edge value is nonzero. A graph is \emph{$\Z_q$-connected} if every balanced boundary $\beta$ has a solution
\[
 Bf=\beta\quad\hbox{in }\Z_q^V,
 \qquad f_e\ne0\quad(e\in E).
\]
This property is independent of the reference orientation. For odd $q\ge3$, strong $\Z_q$-connectivity requires the values to lie in $\{1,-1\}$; reversing the edges with value $-1$ gives the realizing orientation. At $q=3$ these two notions coincide.

\begin{theorem}[Integer intervals]\label{thm:interval}
Let $q\ge2$ and $1\le L<q$ be integers. If $L\tau_f(G)\ge q$, then, for any $r\in\Z^E$ and every balanced boundary $\beta:V\to\Z_q$, there is an integer vector $f\in\Z^E$ satisfying
\[
 Bf\equiv\beta\pmod q,\qquad
 r_e\le f_e\le r_e+L\qquad(e\in E).
\]
\end{theorem}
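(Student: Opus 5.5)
The plan is to rerun the proof of Theorem~\ref{thm:fractional}, centring the fractional vector at $\tfrac L2\one$ instead of $\tfrac12\one$, and then round to integers inside the box $[0,L]^E$ rather than the cube $[0,1]^E$. First I reduce to the case $r=0$ by writing $f=r+g$. Since $\sum_v(Br)_v=0$ and $\beta$ is balanced, the target residue vector $\beta-Br$ sums to zero modulo $q$. Choose integer representatives $h\in\Z^V$ with $h\equiv\beta-Br\pmod q$ and $\sum_vh_v=0$, adjusting at one vertex as in the proof of Theorem~\ref{thm:fractional}. Then route $h$ along a spanning tree to get $a\in\Z^E$ with $Ba=h$. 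It remains to find an integer $g$ with $0\le g\le L$ and $Bg\equiv h\pmod q$.

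Next I check the hypotheses of the approximation. Since $L<q$ and $L\tau\ge q$, we have $\tau=\tau_f(G)\ge q/L>1$, so in particular $G$ is connected. Hence Lemma~\ref{lem:rank} applies and~\eqref{eq:cycle-approx} is available. Apply it with
\[
 b=\frac{\tfrac L2\one-a}{q},
\]
obtaining $x\in\R^d$ and $z\in\Z^E$, and put
\[
 X=\tfrac L2\one+q(Ux-b-z)=a+qUx-qz .
\]
Then $|X_e-\tfrac L2|\le q/(2\tau)\le L/2$ for every edge, so $X\in[0,L]^E$. Moreover $BX=h-qBz$ is integral and congruent to $h$ modulo $q$, because $BU=0$.

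The remaining step is an integer-box version of Lemma~\ref{lem:round}: if $X\in[0,L]^E$ and $BX\in\Z^V$, then there is $g\in\Z^E\cap[0,L]^E$ with $Bg=BX$. I prove it by the same argument. Consider the edges with $X_e\notin\Z$. If they form a nonempty forest, a leaf vertex has exactly one nonintegral contribution to $(BX)_v$, which contradicts integrality. So if $X$ is not integral, these edges contain a cycle. Push $X$ along its oriented cycle vector until some coordinate first reaches an integer. Each moving coordinate is trapped strictly between consecutive integers that lie in $[0,L]$, because $L$ is an integer. Hence the box constraint is preserved, the boundary is unchanged, and the number of fractional edges drops. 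After at most $m$ steps we obtain the required integral $g$. Finally $f=r+g$ satisfies $r\le f\le r+L$ and $Bf=Br+h-qBz\equiv\beta\pmod q$.

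I expect no serious obstacle. The only points needing care are, first, verifying that $\tau>1$ follows from $L<q$, so that Lemma~\ref{lem:rank} and hence~\eqref{eq:cycle-approx} apply. Second, the rounding must stop at integer values rather than only at $0$ or $1$, which is exactly what keeps every coordinate inside $[0,L]$.
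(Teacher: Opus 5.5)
Your proposal is correct and follows essentially the paper's proof. You centre the approximation at the midpoint of the interval, use $q/(2\tau)\le L/2$, and round along cycles while fixing integral coordinates. The differences are cosmetic: you translate by $r$ first instead of centring at $r+\tfrac L2\one$, and you reprove the box rounding directly instead of applying Lemma~\ref{lem:round} to $X-\lfloor X\rfloor$ and adding back $\lfloor X\rfloor$.
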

\begin{proof}
Put $\tau=\tau_f(G)>1$. Route an integer lift of the boundary to obtain $a\in\Z^E$ with $Ba\equiv\beta\pmod q$. Set $c=r+(L/2)\one$ and apply~\eqref{eq:cycle-approx} with $b=(c-a)/q$. The resulting vector
\[
 X=c+q(Ux-b-z)=a+qUx-qz
\]
satisfies $|X_e-c_e|\le q/(2\tau)\le L/2$, while $BX=Ba-qBz$ is integral and has the required residues. Apply Lemma~\ref{lem:round} to $X-\lfloor X\rfloor$ and add back $\lfloor X\rfloor$. The rounding fixes integral coordinates. Each other coordinate is rounded to its floor or ceiling, both inside the specified interval because its endpoints are integers. The boundary is preserved.
\end{proof}

An \emph{integer $q$-flow} is an integer circulation with $1\le|f_e|\le q-1$. Its equivalence with a nowhere-zero $\Z_q$-flow is Tutte's modular-flow theorem~\cite[Theorem~3 of the arXiv version]{DFRS}. Here is a direct rounding proof. Given a nowhere-zero $\Z_q$-flow, choose representatives $g_e\in\{1,\ldots,q-1\}$, round $g/q$ by Lemma~\ref{lem:round} to $y\in\{0,1\}^E$ with $By=B(g/q)$, and take $f=g-qy$. Conversely, reducing an integer $q$-flow modulo $q$ gives a nowhere-zero modular flow.

\begin{theorem}[Group connectivity and integer flows]\label{thm:nonzero-connectivity}
For every integer $q\ge3$, if $\tau_f(G)\ge q/(q-2)$, then $G$ is $\Z_q$-connected and has a nowhere-zero integer $q$-flow.
\end{theorem}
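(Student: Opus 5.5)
The natural route is Theorem~\ref{thm:interval} with $L=q-2$ and a shifted integer window. The hypothesis $\tau_f(G)\ge q/(q-2)$ is exactly $L\tau_f(G)\ge q$. Also $1\le L<q$ holds because $q\ge3$, and $\tau_f(G)>1$, so Theorem~\ref{thm:interval} applies. Windows of length $L=q-2$ contain $q-1$ consecutive integers. We need a nowhere-zero value modulo $q$ on each edge, so we must avoid one residue class, namely zero, per edge. A window $\{r_e,\ldots,r_e+q-2\}$ of $q-1$ consecutive integers misses exactly one residue modulo $q$, namely $r_e-1$. So we will choose $r_e=1$ for every edge, which makes the window $\{1,\ldots,q-1\}$.

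The steps are as follows. First, take an arbitrary balanced boundary $\beta:V\to\Z_q$. Second, apply Theorem~\ref{thm:interval} with $L=q-2$ and $r=\one$. This yields $f\in\Z^E$ with $Bf\equiv\beta\pmod q$ and $1\le f_e\le q-1$. Every $f_e$ is then nonzero modulo $q$, so the reduction of $f$ modulo $q$ is a nowhere-zero solution of $Bf=\beta$ in $\Z_q^V$. Since $\beta$ was arbitrary, $G$ is $\Z_q$-connected. Third, take $\beta=0$. This gives a nowhere-zero $\Z_q$-flow, and the rounding form of Tutte's modular-flow theorem recorded just before the statement converts it into a nowhere-zero integer $q$-flow. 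Explicitly, with $g=f$ we have $g/q\in[0,1]^E$ and $B(g/q)$ integral. Lemma~\ref{lem:round} then gives $y\in\{0,1\}^E$ with $By=B(g/q)$, and $g-qy$ is an integer circulation with $1\le|g_e-qy_e|\le q-1$.

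There is no serious obstacle; the only care is in bookkeeping. We need to check that Theorem~\ref{thm:interval} allows an arbitrary integer lower endpoint $r$, which its statement does, and that the graph is connected. Connectedness holds because $\tau_f(G)>0$ and $\tau_f$ is set to $0$ for disconnected graphs. We also need the boundary convention to be the one used in the definition of $\Z_q$-connectivity. Both use the same incidence matrix $B$, so no conversion is needed, and independence of the reference orientation is already noted in the text. If a direct flow construction were preferred, one could instead take the real circulation produced in the proof of Theorem~\ref{thm:interval} with $a=0$. The interval theorem already packages this.
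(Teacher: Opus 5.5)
Your proposal is correct and matches the paper's proof: apply Theorem~\ref{thm:interval} with $L=q-2$ and windows $[1,q-1]$, then take zero boundary and convert the resulting nowhere-zero $\Z_q$-flow to an integer $q$-flow by the rounding lift through Lemma~\ref{lem:round}. Your closing aside about a ``direct'' construction with $a=0$ is unnecessary and slightly imprecise, because the vector $X$ in that proof has boundary $-qBz$ rather than zero, so it is not itself a circulation.
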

\begin{proof}
Use Theorem~\ref{thm:interval} with intervals $[1,q-1]$. For zero boundary, lift the modular flow as above.
\end{proof}

\section{The density of three-flow-critical graphs}\label{sec:critical-density}

All graphs in this section are loopless; parallel edges are allowed unless
simplicity is stated. For an edge $e$, the contraction $G/e$ identifies its
endpoints and deletes the resulting loops. A connected bridgeless graph
is \emph{three-flow-critical} if it has no nowhere-zero integer $3$-flow,
but $G/e$ has one for every edge $e$. By the conversion in
Section~\ref{sec:flows}, we may equivalently use zero-boundary orientations
modulo three. The bridgeless convention excludes $K_2$. If bridges are
allowed, $K_2$ is the only additional critical graph: contracting any other
edge would preserve a bridge and prevent a nowhere-zero flow.

A graph is \emph{$\Z_3$-reduced} (also called
\emph{$\Z_3$-irreducible}~\cite{LMSWW}) if none of its subgraphs on at least two vertices is
strongly $\Z_3$-connected. Write $e_H(Y)$ for the number of edges of $H$
with both endpoints in $Y$. A simple graph $H$ is \emph{$(3,6)$-sparse} if
$e_H(Y)\le3|Y|-6$ for every $Y\subseteq V(H)$ with $|Y|\ge3$; it is \emph{$(3,6)$-tight} if also
$|E(H)|=3|V(H)|-6$.

\begin{theorem}[Reduced density]\label{thm:reduced-density}\label{thm:critical-density}
Every $\Z_3$-reduced graph is simple and $(3,6)$-sparse. Consequently every
three-flow-critical graph on $n$ vertices has at most $3n-6$ edges.
\end{theorem}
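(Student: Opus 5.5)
The plan has three steps: show simplicity directly, show $(3,6)$-sparsity by contradiction on a minimal violating subgraph, and then deduce the edge bound for critical graphs.

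\textbf{Simplicity.} Two parallel edges $uv$ already form a strongly $\Z_3$-connected subgraph on two vertices. Orienting both edges the same way gives net outflow $\pm2$ at $u$, and orienting them oppositely gives $0$. So every balanced boundary $\beta(u)=-\beta(v)\in\Z_3$ is realized, and a $\Z_3$-reduced graph has no parallel edges.

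\textbf{Sparsity: setting up the minimal counterexample.} Suppose some $Y$ with $|Y|\ge3$ has $e_H(Y)\ge3|Y|-5$. Take $Y$ minimal and let $H$ be the induced subgraph on $Y$; we will show $H$ is strongly $\Z_3$-connected, a contradiction. By minimality and simplicity:
\begin{itemize}
\item every proper subset $Z$ with $|Z|\ge3$ has $e_H(Z)\le3|Z|-6$;
\item pairs span at most one edge, i.e. at most $3\cdot2-5$;
\item singletons span $0=3-3$.
\end{itemize}
Hence for any partition $\mathcal P$ of $Y$ into $r\ge2$ parts,
\[
 e_H(\mathcal P)\ge 3|Y|-5-\sum_i(3|V_i|-3)=3r-5.
\]
The same count shows that each shore of any cut is connected after passing to components, and that $H$ is connected and bridgeless. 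This bound is two short of the $3(r-1)$ needed to invoke Theorem~\ref{thm:ordinary-residues} directly.

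\textbf{Sparsity: closing the gap of two by parity.} Rather than requiring $\tau_f(H)\ge3$, I would redo the construction of Theorem~\ref{thm:fractional} with $q=3$ on $H$:
\begin{enumerate}
\item Route an integer lift $h$ of the target outdegree residues.
\item Apply~\eqref{eq:cycle-approx} to get a real vector $X=a+3Ux-3z$ with integral boundary, the prescribed residues, and $|X_e-\tfrac12|\le 3/(2\tau)$ edgewise.
\item Let $o=d_0^-+BX$ be the resulting integral outdegree vector, with $\sum o=m$.
\item Instead of Lemma~\ref{lem:round}, which needs $X\in[0,1]^E$, invoke Hakimi's criterion: an orientation with outdegrees $o$ exists iff $o(S)\ge e_H(S)$ for every $S$.
\end{enumerate}
It suffices to check this for $S$ with $H[S]$ and $H[V\setminus S]$ connected. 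Summing over $S$ gives
\[
 o(S)-e_H(S)=\sum_{e\in\delta(S)}X_e^{\text{out}}\ge \tfrac12|\delta(S)|-\tfrac{3}{2\tau}|\delta(S)|,
\]
with the analogous bound for the complement. The deficit is therefore at most $\tfrac{3}{2\tau}|\delta(S)|-\tfrac12|\delta(S)|$. I would show this deficit is strictly less than $2$ on connected-shore cuts, using $e_H(\mathcal P)\ge3r-5$ to bound $\tau$ and the cut sizes. I would then add a parity constraint: choose the lift $h$ so that $o\equiv\deg$ in the appropriate parity sense, i.e.\ one compatible with both the mod-$3$ residues and integrality. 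This makes $o(S)-e_H(S)\equiv|\delta(S)|\pmod 2$, so any violation has size at least $2$. Since the deficit is below $2$, no violation occurs, every balanced boundary is realized, and $H$ is strongly $\Z_3$-connected.

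\textbf{Main obstacle.} I expect the delicate step to be making the quantitative error bound genuinely below $2$ on every connected-shore cut while simultaneously fixing the parity. The worst case is a two-part partition meeting $3r-5$ exactly, where $|\delta(S)|$ is small and $\tau$ is near $(3r-5)/(r-1)$. First I would try bounding each cut's error directly through Lemma~\ref{lem:approx} applied to $U$ with the rank ratio computed from $e_H(\mathcal P)\ge3r-5$. If that is slightly off, I would use an edgewise perturbation of the target vector $b$.

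\textbf{Consequence for critical graphs.} A three-flow-critical graph is $\Z_3$-reduced. If it contained a strongly $\Z_3$-connected subgraph $K$ on at least two vertices, contract an edge of $K$: the contraction has a nowhere-zero $3$-flow. Its restriction outside $K$ extends across $K$ because $K$ realizes every boundary, which is a contradiction. Applying $(3,6)$-sparsity to $Y=V$ with $n\ge3$ then gives $m\le3n-6$; the case $n=2$ is excluded by simplicity and bridgelessness.
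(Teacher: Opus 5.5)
Your architecture matches the paper's: simplicity from a double edge, a minimal violating set, the $q=3$ cycle-space approximation giving integral outdegrees with the right residues, and Hakimi's criterion on bonds. However, the quantitative step that makes this work is missing, and the estimate you propose cannot supply it.

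\textbf{The main gap: bounding $\tau_f(H)$.} Charging every part $3|V_i|-3$ edges gives $e_H(\mathcal P)\ge3r-5$. At $r=2$ this allows a single crossing edge, so it yields only $\tau_f(H)\ge1$ and hence $|X_e-\tfrac12|\le\tfrac32$, which is vacuous. It also does not give the bridgelessness you claim. The ``worst case'' you describe cannot occur, and no perturbation of $b$ is needed.

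The missing idea is to compare each part with the global ratio rather than with $3|V_i|-3$. Keep exactly $M=3h-5$ edges on the minimal set, where $h=|Y|$.
\begin{itemize}
\item For a proper $Z$ with $|Z|\ge3$, minimality gives $e_H(Z)\le3|Z|-6\le\frac{M}{h-1}(|Z|-1)$; the difference is $(3h-2|Z|-1)/(h-1)>0$. Simplicity gives the same bound for pairs.
\item Summing over the parts of any $r$-part partition shows at least $M(r-1)/(h-1)$ crossing edges. So the singleton partition is extremal, $\tau_f(H)=M/(h-1)$ exactly, and $3/\tau_f(H)=1+2/M$.
\item On a bond of size $c$, both shores contain spanning trees, so $c\le M-h+2<M$. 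The approximation excess over $c$ is therefore at most $2c/M<2$ in boundary units.
\end{itemize}

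\textbf{The parity step is wrong as stated.} The quantity $o(S)-e_H(S)$ is the number of edges leaving $S$, and it has no forced parity. A violation of $o(S)\ge e_H(S)$ can therefore be by one. Your combination of ``deficit $<2$'' with ``violation $\ge2$'' proves nothing.

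The true parity statement concerns the boundary $\eta=2o-\deg$: $\eta(S)=2(o(S)-e_H(S))-|\delta(S)|\equiv|\delta(S)|\pmod2$. This is automatic from integrality of $o$, so there is no extra lift to choose. In outdegree units you need deficit $<1$, i.e.\ $\tfrac12(3/\tau_f(H)-1)|\delta(S)|<1$. That is exactly $c/M<1$ from the estimate above. Equivalently, in the paper's formulation, $|\eta(S)|\le(1+2/M)c<c+2$, and parity then forces $|\eta(S)|\le c$.

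\textbf{The final deduction.} The deduction for critical graphs is fine in outline. The lift must first pass from $G/e$ to $G/K$, and the edges of $G[V(K)]$ outside $K$ must be oriented arbitrarily before using strong $\Z_3$-connectivity of $K$.
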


The previous general upper bound is $4n-10$~\cite[Theorem~1.2]{LMSWW}.
The recent $3n-5$ bound of~\cite[Theorem~1.6]{ADLMS} assumes at most one
vertex of degree at least seven and criticality under arbitrary vertex
identifications. The theorem above applies to every three-flow-critical
graph under the edge-contraction definition given here.
Appendix~\ref{app:critical-density} proves that every $(3,6)$-tight graph
on at least five vertices is strongly $\Z_3$-connected, and hence improves
the critical-graph bound to $3n-7$ for $n\ge5$.

Approximation produces an integral boundary with the parity of the
vertex degrees. Its real edge values may slightly exceed one in
magnitude, but parity forces any violated cut inequality to fail by at
least two. We will bound the excess by less than two on every
\emph{bond}, meaning a cut whose two shores induce connected subgraphs.
The proof proceeds by lifting orientations across contractions, applying
the exact-boundary cut criterion, and then using this parity argument.

\subsection{Contractions and exact boundaries}

For later use, we allow a prescribed boundary. A pair $(J,\gamma)$
consists of a connected bridgeless graph and a balanced function
$\gamma:V(J)\to\Z_3$. A \emph{$\gamma$-orientation} is an orientation
$D$ with $d_D^+(v)-d_D^-(v)\equiv\gamma(v)\pmod3$ at every vertex.
The pair is \emph{feasible} if such an orientation exists and
\emph{infeasible} otherwise. It is \emph{contraction-critical} if it is
infeasible but every single-edge contraction is feasible. Boundary values
at identified vertices are summed; write $\gamma/e$ for the resulting
boundary on $J/e$. Thus $G$ is three-flow-critical exactly when $(G,0)$
is contraction-critical.

\begin{lemma}[Lifting a prescribed boundary]\label{lem:critical-reduced}
Let $J$ be connected, let $\gamma$ be a balanced boundary modulo three,
and let $Q$ be a strongly $\Z_3$-connected subgraph on at least two
vertices. Write $J/Q$ for the graph obtained by identifying $V(Q)$ and
deleting loops, with induced boundary $\gamma/Q$. Every
$(\gamma/Q)$-orientation of $J/Q$ lifts to a $\gamma$-orientation of $J$.
In particular, a $(\gamma/e)$-orientation of $J/e$ for any $e\in E(Q)$
also lifts to a $\gamma$-orientation of $J$. Consequently every contraction-critical pair
has a $\Z_3$-reduced graph on at least three vertices. In particular,
every three-flow-critical graph is $\Z_3$-reduced.
\end{lemma}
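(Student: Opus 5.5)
The plan is to start from a $(\gamma/Q)$-orientation $D'$ of $J/Q$, extend it to the edges of $Q$, and then deal with the consequences. The edges of $J$ not in $E(J[V(Q)])$ correspond bijectively to the edges of $J/Q$. Edges of $J$ with both ends in $V(Q)$ but not in $E(Q)$ become loops and are deleted. Orient each such loop-edge arbitrarily, and orient every non-loop edge as in $D'$.

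For each $v\in V(Q)$, let $\beta(v)$ be $\gamma(v)$ minus the net outdegree at $v$ contributed by all edges already oriented outside $E(Q)$, taken modulo three. We check that $\beta$ is balanced on $V(Q)$. Summing over $V(Q)$, the loop-edges with both ends in $V(Q)$ contribute zero. The crossing edges contribute exactly the net outdegree of the contracted vertex in $D'$, which is $(\gamma/Q)(v_Q)=\sum_{v\in V(Q)}\gamma(v)$. Hence $\sum_{v\in V(Q)}\beta(v)\equiv0$. Since $Q$ is strongly $\Z_3$-connected, $Q$ has an orientation realizing $\beta$. Combining this orientation with the others gives a $\gamma$-orientation of $J$, because vertices outside $Q$ keep their $D'$ net degrees.

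For the second claim, take $e=uv\in E(Q)$. Then $J/e$ contracts to $J/Q$ by further identifying the image of $V(Q)$, and the boundary $(\gamma/e)/(Q/e)=\gamma/Q$. The image $Q/e$ need not be strongly $\Z_3$-connected, so the argument should instead restrict an orientation. A $(\gamma/e)$-orientation of $J/e$ restricts to a $(\gamma/Q)$-orientation of $J/Q$ once we delete the edges that become loops. Deleting a loop-edge inside the identified set changes no net degree at the merged vertex, since both of its ends lie in that set. The restriction is therefore a $(\gamma/Q)$-orientation, and the first part lifts it.

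For the consequences, let $(J,\gamma)$ be contraction-critical, and suppose $J$ is not $\Z_3$-reduced, with a strongly $\Z_3$-connected subgraph $Q$ on at least two vertices. Then $Q$ has an edge $e$, since strong $\Z_3$-connectivity with a nonzero balanced boundary forces connectivity. For example, take a boundary of $\pm1$ on two vertices in different components, which is unrealizable because each component has a boundary summing to zero. By criticality $J/e$ is feasible, and the second claim then makes $J$ feasible, a contradiction. So $J$ is $\Z_3$-reduced.

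It remains to show that $J$ has at least three vertices. If $|V(J)|=2$ with $t\ge2$ parallel edges, since $J$ is bridgeless, I would show directly that such a graph is feasible or not critical. Contracting an edge gives a single vertex with boundary $0$, which is trivially feasible. So one must check that the two-vertex pair is itself feasible or that some two-vertex subgraph is strongly $\Z_3$-connected. With $t\ge2$ edges, the achievable net degree at one vertex is $t-2j$ for $j=0,\dots,t$. For $t\ge3$ this covers all residues modulo three, so the pair is feasible, contradicting infeasibility. For $t=2$, the achievable values are $\{2,0,-2\}$, which is also all residues. Hence two-vertex graphs are never infeasible. A one-vertex graph is trivially feasible, and $K_2$ is excluded by bridgelessness.

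Finally, $G$ three-flow-critical means $(G,0)$ is contraction-critical, so $G$ is $\Z_3$-reduced. The main point needing care is the loop-deletion bookkeeping in the balance computation. I expect that step to be routine.
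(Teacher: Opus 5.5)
Your proposal is correct and follows essentially the same route as the paper. You lift the quotient orientation, orient the loop-edges inside $V(Q)$ arbitrarily, and realize the balanced residual on $Q$. You obtain a $J/Q$-orientation from a $J/e$-orientation by further identification, and you rule out one- and two-vertex critical pairs directly. You also check explicitly that a strongly $\Z_3$-connected $Q$ on at least two vertices is connected, so it contains an edge $e$; this fills in a step the paper leaves implicit.
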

\begin{proof}
Take a $(\gamma/Q)$-orientation of $J/Q$. Its boundary at the identified
vertex is $\sum_{v\in V(Q)}\gamma(v)$. Lift the orientations of edges outside
$V(Q)$ and across its cut. Orient the edges with both endpoints in
$V(Q)$ but outside $Q$ arbitrarily. On $V(Q)$, subtract the contributions
of all these oriented edges from $\gamma$. The residual boundary has
sum zero, because internal edges contribute zero to the sum and the
quotient equation accounts for the cut edges. Strong
$\Z_3$-connectivity of $Q$ realizes the residual boundary and completes
the orientation of $J$.

An orientation of $J/e$ also gives one of $J/Q$ by further identification
and summation of the boundary equations. A contraction-critical pair
therefore cannot contain such a $Q$.
It cannot have one vertex, where the only balanced boundary is zero,
or two vertices: a bridgeless two-vertex graph has at least two parallel
edges, which can realize any residual boundary modulo three.
\end{proof}

For a vertex vector $\eta$, write $\eta(S)=\sum_{v\in S}\eta_v$. The next
lemma is the boundary form of Hakimi's orientation theorem~\cite{Hakimi}.
We include a proof, including the reduction from all cuts to bonds.

\begin{lemma}[Boundary cut criterion]\label{lem:boundary-cut-test}
Let $H$ be connected and let $\eta\in\Z^{V(H)}$ satisfy
$\eta(V(H))=0$ and $\eta_v\equiv\deg_H(v)\pmod2$. There is an
orientation with boundary exactly $\eta$ if and only if
\[
 |\eta(S)|\le|\delta_H(S)|
 \qquad\text{for every bond }\delta_H(S).
\]
\end{lemma}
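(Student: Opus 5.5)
Necessity is immediate. In an orientation $D$ with boundary exactly $\eta$, summing $\eta_v=d_D^+(v)-d_D^-(v)$ over a shore $S$ cancels the internal edges. This gives $\eta(S)=d_D^+(S)-d_D^-(S)$, whose absolute value is at most $|\delta_H(S)|$, for every cut and in particular for every bond.

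For sufficiency I would proceed in two steps: first show that the bond inequalities imply the inequalities for all cuts, and then realize $\eta$ by a flow argument.

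\textbf{Reduction from cuts to bonds.} Take a nonempty proper set $S$. Let $S_1,\ldots,S_a$ be the vertex sets of the components of $H[S]$, and let $T_1,\ldots,T_b$ be those of $H[V\setminus S]$.
\begin{itemize}
\item Each $\delta_H(S_i)$ lies inside $\delta_H(S)$, and these cuts are pairwise disjoint. So $|\eta(S)|\le\sum_i|\eta(S_i)|$, while $|\delta_H(S)|=\sum_i|\delta_H(S_i)|$. It therefore suffices to treat sets $S$ with $H[S]$ connected.
\item Now fix such a connected $S$. For each component $T_j$ of the complement, put $R_j=V\setminus T_j$. Then $H[R_j]$ is connected, because it consists of $S$ together with the other $T_{j'}$, each of which is joined to $S$ since $H$ is connected. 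So $\delta_H(T_j)=\delta_H(R_j)$ is a bond.
\item The cuts $\delta_H(T_j)$ partition $\delta_H(S)$. Also $\eta(S)=-\sum_j\eta(T_j)$, using $\eta(V(H))=0$. Summing the bond inequalities over $j$ gives $|\eta(S)|\le|\delta_H(S)|$.
\end{itemize}

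\textbf{Realization.} Fix a reference orientation with incidence matrix $B$. Orienting an edge $e$ as in the reference corresponds to $f_e=1$, and reversing it to $f_e=-1$; the boundary is then $Bf$. So I need $f\in\{-1,1\}^E$ with $Bf=\eta$, and I would first find a real solution $f\in[-1,1]^E$. By Hoffman's circulation theorem, or equivalently max-flow min-cut on the network with a source and sink attached to the vertices, feasibility of $Bf=\eta$ with $-1\le f\le1$ is equivalent to $\eta(S)\le|\delta_H(S)|$ for all $S$. Applying this to both $S$ and its complement gives the two-sided inequalities, which hold by the reduction.

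\textbf{Rounding.} Since the constraint matrix is totally unimodular, there is an integral solution $f\in\{-1,0,1\}^E$. The zero entries remain to be removed, and this is the main obstacle. I would use Lemma~\ref{lem:round} with parity.
\begin{itemize}
\item Put $X=(f+\one)/2\in[0,1]^E$. Then $BX=(\eta+B\one)/2$.
\item $(B\one)_v\equiv\deg_H(v)\pmod 2$ and $\eta_v\equiv\deg_H(v)\pmod2$, so $\eta_v+(B\one)_v$ is even. Hence $BX$ is integral.
\item Lemma~\ref{lem:round} gives $y\in\{0,1\}^E$ with $By=BX$. Then $2y-\one$ is a $\pm1$ vector with boundary $\eta$, which is the required orientation.
\end{itemize}
This rounding step also makes the total unimodularity step unnecessary, since Lemma~\ref{lem:round} applies directly to any real solution $f\in[-1,1]^E$.
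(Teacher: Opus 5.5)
Your proof is correct. Your reduction from arbitrary cuts to bonds is the paper's argument: you sum over the components of each shore, while the paper picks a violating component, but the decomposition is identical.

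The realization step takes a different route. The paper stays self-contained. It converts $\eta$ into target outdegrees $b(v)=(\deg_H(v)+\eta_v)/2$, so the cut inequalities become $b(S)\ge e_H(S)$. It then takes an orientation minimizing the total excess $\sum_v\max\{d_D^+(v)-b(v),0\}$. For an excess vertex $s$, it finds a deficient vertex in the set $R$ reachable from $s$; otherwise $e_H(R)>b(R)$. Reversing a directed path to that vertex lowers the excess, which is Hakimi's augmenting-path argument.

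You instead cite Hoffman's (or Gale's) feasibility theorem to get a real $f\in[-1,1]^E$ with $Bf=\eta$. You then apply Lemma~\ref{lem:round} to $X=(f+\one)/2$, using the parity hypothesis exactly to make $BX=(\eta+B\one)/2$ integral. Your route is more modular. It reuses the paper's own cycle-rounding mechanism, the same one that finishes Theorem~\ref{thm:fractional}, and it shows clearly why the parity condition is the right hypothesis. The cost is reliance on an external flow theorem, which is itself usually proved by the augmenting-path argument the paper carries out directly; the paper's version is also directly algorithmic.

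As you note, the total-unimodularity step is unnecessary and can be dropped.
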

\begin{proof}
Necessity follows by summing the boundary equations over a shore.
For sufficiency, the inequalities first extend to every cut. If a
shore violates one, some connected component of that shore violates
the same inequality. If its complement is disconnected, some component
of the complement violates the opposite inequality; its complement is
connected because $H$ is connected and the original shore was connected.
This gives a violated bond.

Set $b(v)=(\deg_H(v)+\eta_v)/2$. These are integers, and
\[
 b(V(H))=|E(H)|,\qquad
 b(S)=e_H(S)+\frac{|\delta_H(S)|+\eta(S)}2\ge e_H(S),
\]
where $b(S)=\sum_{v\in S}b(v)$. We will realize $b$ as the outdegree
vector. Choose an orientation $D$ minimizing
$\sum_v\max\{d_D^+(v)-b(v),0\}$.

If a vertex $s$ has $d_D^+(s)>b(s)$, let $R$ be the set of vertices
reachable from $s$ along directed paths. Some $t\in R$ must satisfy
$d_D^+(t)<b(t)$. Otherwise, since no edge leaves $R$,
\[
 e_H(R)=\sum_{v\in R}d_D^+(v)>b(R),
\]
contradicting the displayed inequality. Reversing a directed path from
$s$ to $t$ decreases the excess at $s$ by one, increases the outdegree
at $t$ by one, and preserves every internal outdegree. The sum of
positive excesses strictly decreases, a contradiction. Hence
$d_D^+(v)\le b(v)$ at every vertex. Their totals are equal, so equality
holds throughout, and the boundary is $2b-\deg_H=\eta$.
\end{proof}

\subsection{Uniform density and the parity argument}

\begin{lemma}[Uniform density and parity]\label{lem:dense-slack}
Let $H$ be a connected multigraph on $h\ge3$ vertices with $M\ge3h-5$
edges. Suppose that
\begin{equation}\label{eq:uniform-density}
 e_H(Y)\le\frac{M}{h-1}(|Y|-1)
 \qquad(\varnothing\ne Y\subsetneq V(H)).
\end{equation}
Then $H$ is strongly $\Z_3$-connected.
\end{lemma}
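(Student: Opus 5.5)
The plan is to run the approximation construction of Theorem~\ref{thm:fractional} with $q=3$, even though $\tau_f(H)$ may be slightly below $3$. The resulting real edge values can then exceed $1$ in magnitude. Instead of rounding edge by edge, I will check the cut inequalities of Lemma~\ref{lem:boundary-cut-test} directly and use parity to absorb the small excess.

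\emph{Step 1: bound $\tau_f(H)$.} Take a partition $\mathcal P=\{V_1,\dots,V_r\}$ with $r\ge2$. Applying \eqref{eq:uniform-density} to each part gives
\[
 e_H(\mathcal P)=M-\sum_i e_H(V_i)\ge M-\frac{M}{h-1}(h-r)=\frac{M}{h-1}(r-1).
\]
By \eqref{eq:packing}, $\tau:=\tau_f(H)\ge M/(h-1)$. Since $M\ge3h-5$ and $h\ge3$, we get $\tau\ge 3-2/(h-1)\ge2>1$, so Lemma~\ref{lem:rank} and \eqref{eq:cycle-approx} apply. The key quantitative consequence is
\[
 \frac3\tau-1\le\frac{3(h-1)-M}{M}\le\frac2M .
\]
If $M>3(h-1)$, then $\tau>3$ and Theorem~\ref{thm:fractional} already finishes, but the argument below covers all cases uniformly.

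\emph{Step 2: build a fractional orientation.} Fix a balanced $\beta:V(H)\to\Z_3$. Let $\pi=(\deg_H+\beta)/2$ in $\Z_3$; this is compatible. Exactly as in the proof of Theorem~\ref{thm:fractional}, choose an integer lift $h$, an integer vector $a$ with $Ba=h$, and use \eqref{eq:cycle-approx} to obtain $X=a+3Ux-3z$ with $|X_e-\tfrac12|\le 3/(2\tau)$. Then $BX$ is integral and $BX\equiv h\pmod 3$. Set
\[
 \eta=B(2X-\one)=2BX-B\one .
\]
This vector is integral with $\eta(V(H))=0$. We have $\eta_v\equiv B\one_v\equiv\deg_H(v)\pmod2$, because $B\one$ is outdegree minus indegree in the reference orientation. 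The same computation as in Theorem~\ref{thm:fractional} shows that any orientation with exact boundary $\eta$ has outdegrees $\equiv\pi$, hence boundary $\equiv\beta\pmod3$. Summing over a shore $S$ as in that proof gives
\[
 |\eta(S)|\le\frac3\tau\,|\delta_H(S)| .
\]

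\emph{Step 3: parity and bonds.} By Lemma~\ref{lem:boundary-cut-test} it suffices to prove $|\eta(S)|\le|\delta_H(S)|$ for every bond. Note that $\eta(S)\equiv\sum_{v\in S}\deg_H(v)\equiv|\delta_H(S)|\pmod 2$. So a violation would force $|\eta(S)|\ge|\delta_H(S)|+2$, which requires
\[
 \Bigl(\frac3\tau-1\Bigr)|\delta_H(S)|\ge2,\qquad\text{that is,}\qquad |\delta_H(S)|\ge M
\]
by Step~1. For a bond, both shores induce connected subgraphs. Since $h\ge3$, one shore has at least two vertices and hence contains an edge, so $|\delta_H(S)|<M$, a contradiction. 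Thus the cut criterion holds on all bonds, and Lemma~\ref{lem:boundary-cut-test} gives an orientation with boundary $\eta$, which realizes $\beta$.

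I expect the main obstacle to be the bookkeeping in Step~2. One must check that the single integral vector $\eta$ simultaneously has the degree parity, the residues of $\beta$, and the cut bound inherited from the edgewise error. Once that is set up, the density estimate $3/\tau-1\le2/M$ and the observation $|\delta_H(S)|<M$ on bonds close the argument.
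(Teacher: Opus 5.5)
Your proposal is correct and is essentially the paper's argument. Your vector $2X-\one$ is exactly the paper's $u=6(Ux-b-z)$; the edgewise bound $3/\tau_f(H)\le1+2/M$ together with the degree parity of $\eta$ means any bond violation would be at least $c+2$, which forces $c\ge M$, impossible for a bond. The differences are cosmetic: you reach $\eta$ through the outdegree prescription $\pi=(\deg_H+\beta)/2$ of Theorem~\ref{thm:fractional} instead of lifting $\beta$ directly, and you bound the bond size by $M-1$ rather than $M-h+2$. You should also rename your integer lift, since $h$ already denotes $|V(H)|$.
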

\begin{proof}
For a partition into $r\ge2$ parts, summing~\eqref{eq:uniform-density}
bounds the number of internal edges by $M(h-r)/(h-1)$. Thus there
are at least $M(r-1)/(h-1)$ crossing edges. The singleton partition
attains this ratio, so~\eqref{eq:packing} gives
$\tau_f(H)=M/(h-1)>1$.

Fix a balanced boundary $\beta$ modulo three. Let $B$ and $U$ be the
incidence and cycle-basis matrices from Section~\ref{sec:packing}.
Integer routing on a spanning tree gives $a\in\Z^{E(H)}$ with
$2Ba-B\one\equiv\beta\pmod3$. Apply the cycle-space approximation
\eqref{eq:cycle-approx} to $b=(\one/2-a)/3$, and set
\[
 u=6(Ux-b-z),\qquad \eta=Bu=2Ba-B\one-6Bz.
\]
The vector $\eta$ is integral, has total sum zero, and satisfies
\[
 \eta\equiv\beta\pmod3,\qquad
 \eta_v\equiv\deg_H(v)\pmod2,\qquad
 |u_e|\le\frac3{\tau_f(H)}
       =\frac{3(h-1)}M\le1+\frac2M.
\]
Let $C=\delta_H(S)$ be a bond of size $c$. Its two connected shores
contain at least $h-2$ edges, so $c\le M-h+2<M$. Consequently
\[
 |\eta(S)|\le\left(1+\frac2M\right)c<c+2.
\]
But $\eta(S)\equiv c\pmod2$, so $|\eta(S)|\le c$.
Lemma~\ref{lem:boundary-cut-test} gives an orientation with boundary
$\eta$, which realizes $\beta$.
\end{proof}

\begin{proof}[Proof of Theorem~\ref{thm:reduced-density}]
Two parallel edges are strongly $\Z_3$-connected, so a reduced graph is
simple. Suppose that some vertex set of size $h\ge3$ spans more than
$3h-6$ edges, and choose such a set minimal under inclusion.
Keep exactly $M=3h-5$ of its edges to form a subgraph $H$. Simplicity
gives $h\ge5$. Minimality implies $e_H(Y)\le3|Y|-6$ for every proper
subset with $3\le|Y|<h$. If $s=|Y|$, then
\[
 \frac{M}{h-1}(s-1)-(3s-6)
 =\frac{3h-2s-1}{h-1}>0.
\]
For $s=2$, simplicity gives $e_H(Y)\le1<M/(h-1)$; for $s=1$ both
sides of~\eqref{eq:uniform-density} vanish. Thus $H$ satisfies that
inequality for every nonempty proper subset. Summing it over the
components of $H$ shows that $H$ is connected, and
Lemma~\ref{lem:dense-slack} makes it strongly $\Z_3$-connected. This
contradicts reducedness and proves $(3,6)$-sparsity.

By Lemma~\ref{lem:critical-reduced}, every three-flow-critical graph
is reduced and has at least three vertices. Taking $Y=V(G)$ gives
$|E(G)|\le3|V(G)|-6$.
\end{proof}

The distinction between this proof and Appendix~\ref{app:critical-density}
is exact. When $M=3h-5$, the excess allowed by approximation is at most
$2c/M<2$ on each bond. At the tight count $M=3h-6$, it becomes $3c/M$.
A bond can then have $c=2h-4=2M/3$ edges, giving excess exactly two;
this happens precisely when its two shores are trees. The appendix
analyzes this equality case and removes it by perturbing the edgewise
approximation bounds.

\subsection{Small orders and the remaining conjecture}

The small cases explain both order restrictions in the density bounds.
They follow from the bound just proved and elementary orientations.

\begin{proposition}[Small orders and sharpness]\label{prop:critical-small-orders}
The only three-flow-critical graph on at most five vertices is $K_4$.
Two copies of $K_4$ sharing exactly one edge form a three-flow-critical
graph with six vertices and eleven edges. Thus $K_4$ attains $3n-6$,
and the six-vertex graph attains $3n-7$.
\end{proposition}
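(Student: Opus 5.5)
The plan has three parts: restrict which graphs can be three-flow-critical on at most five vertices, enumerate the few survivors by hand, and then check the six-vertex example directly. Throughout I use the equivalent language of zero-boundary orientations modulo three from Section~\ref{sec:flows}. In such an orientation, a vertex of degree $3$ must be a source or a sink. A vertex of degree $4$ has outdegree equal to indegree, since $d^+-d^-$ is even, lies in $[-4,4]$, and is divisible by three. A vertex of degree $5$ has $d^+-d^-=\pm3$.

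\textbf{Restricting the candidates.} By Lemma~\ref{lem:critical-reduced} and Theorem~\ref{thm:reduced-density}, a three-flow-critical graph $G$ is simple, has $n\ge3$, and satisfies $m\le3n-6$. Next I show $\delta(G)\ge3$.
\begin{itemize}
\item A vertex of degree $1$ would give a bridge, which is excluded.
\item Let $v$ have degree $2$ with incident edges $e,e'$. Take a nowhere-zero $3$-flow of $G/e$ and give $e$ the value of $e'$, oriented through $v$. This is a nowhere-zero $3$-flow of $G$, contradicting criticality.
\end{itemize}
With the minimum degree settled, the small orders go as follows.
\begin{itemize}
\item For $n=3$, a simple graph cannot have minimum degree $3$, so there is no critical graph.
\item For $n=4$, simplicity and $\delta\ge3$ force $G=K_4$.
\item For $n=5$, we need $8\le m\le 9$. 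With degree sum $16$ and minimum degree $3$, the sequence is $(4,3,3,3,3)$. Its complement has degree sequence $(0,1,1,1,1)$, so the complement is two disjoint edges and $G$ is the wheel $W_4$. With $m=9$, $G$ is $K_5$ minus an edge.
\end{itemize}

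\textbf{Exhibiting flows and checking $K_4$.} I will write down explicit zero-boundary orientations for the two five-vertex candidates.
\begin{itemize}
\item For $W_4$, make the rim vertices alternately sources and sinks; the rim is an even cycle, so this is consistent. The hub then receives from the two rim sources and sends to the two rim sinks, so its outdegree equals its indegree.
\item For $K_5$ minus the edge $uv$, make $u$ a source and $v$ a sink, and orient the triangle on the other three vertices cyclically. Each of these three vertices then has outdegree $2$ and indegree $2$.
\end{itemize}
So neither five-vertex candidate is critical.

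For $K_4$ there is no zero-boundary orientation: every vertex has degree $3$, so each must be a source or a sink, and that would require a bipartite graph. It remains to check that $K_4/e$ is feasible. This graph has one vertex of degree $4$ and two of degree $3$, joined by one single and two double edges. I will exhibit an orientation: the two degree-$3$ vertices are a source $s$ and a sink $t$, the merged vertex $w$ receives both parallel edges from $s$ and sends both parallel edges to $t$, and the single edge is oriented $s\to t$. This shows $K_4$ is the unique critical graph on at most five vertices.

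\textbf{The six-vertex example.} Let the two copies of $K_4$ share the edge $xy$, with remaining vertices $a,b$ in one copy and $c,d$ in the other. Then $x$ and $y$ have degree $5$, the other four vertices have degree $3$, and $m=11=3\cdot6-7$.
\begin{itemize}
\item \emph{Infeasibility.} Since $ab$ is an edge and $a,b$ are sources or sinks, one of them is a source and the other a sink. Then $x$ gains one in-edge and one out-edge from $\{a,b\}$, and likewise from $\{c,d\}$. Together with the edge $xy$, this gives $d^+(x)-d^-(x)=\pm1\ne\pm3$, a contradiction.
\item \emph{Criticality.} Up to symmetry, the edge to contract is $xy$, $xa$, or $ab$. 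For each of these three contractions I will write down an explicit zero-boundary orientation.
\end{itemize}
The main obstacle is this last finite verification, which is routine but must be carried out explicitly in all three cases.
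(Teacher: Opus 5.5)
Your classification of the graphs on at most five vertices and your proof that the six-vertex graph $J$ has no zero-boundary orientation follow the paper's argument closely, and every step you state is correct. The one piece you have not done is the criticality of $J$. You reduce it correctly to contracting $xy$, $xa$ or $ab$, but you stop before giving the orientations, so as written that part of the statement is asserted rather than proved.

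The check does go through, and the source/sink bookkeeping you used for infeasibility finds the orientations at once. In all three cases make $c$ a source and $d$ a sink; they then contribute zero net outflow at the images of $x$ and $y$.
\begin{itemize}
\item For $J/xy$, with merged vertex $w$, also make $a$ a source and $b$ a sink. Then $w$ has four incoming and four outgoing edges.
\item For $J/ab$, with merged vertex $u$, orient $x\to y$, both edges from $x$ to $u$, and both edges from $u$ to $y$. The boundaries at $x,u,y$ are $3,0,-3$.
\item For $J/xa$, with merged vertex $w$, make $b$ a source and direct both edges joining $w$ and $y$ toward $y$. The boundaries at $w$ and $y$ are $0$ and $-3$.
\end{itemize}
With these three orientations your proof is complete.

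Where you genuinely differ from the paper is in how the contractions are certified. You exhibit a zero-boundary orientation of $K_4/e$, and would do the same for each $J/e$; that is all criticality requires. The paper instead shows that $K_4/e$ is strongly $\Z_3$-connected. It uses Lemma~\ref{lem:critical-reduced} together with the fact that two or three parallel edges realize every balanced boundary modulo three. It then handles $J$ without case analysis. For any edge $e$, the image in $J/e$ of a copy of $K_4$ containing $e$ is strongly $\Z_3$-connected. Contracting that image leaves the other copy with its shared edge contracted, which is again strongly $\Z_3$-connected, so the lifting lemma applies. The paper's route avoids enumerating edge orbits and yields the stronger conclusion that every $J/e$ is strongly $\Z_3$-connected. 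Yours is more elementary and self-contained, but only once the finite verification above is actually written out.
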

\begin{proof}
A three-flow-critical graph has at least three vertices by
Lemma~\ref{lem:critical-reduced} and is simple by
Theorem~\ref{thm:reduced-density}. Its minimum degree is at least three:
connectedness excludes degree zero, and bridgelessness excludes
degree one. If a vertex $v$ had degree two, with neighbours $x,y$, a
zero-boundary orientation of $G/vx$ would lift by replacing the edge
corresponding to $vy$ by a directed path through $v$. This preserves all
old boundary values and gives boundary zero at $v$, a contradiction.
Hence there is no critical graph on three vertices, and on four vertices
the only possibility is $K_4$.

The graph $K_4$ has no zero-boundary orientation modulo three: every
cubic vertex would have to be a source or a sink, forcing bipartiteness.
For any edge $e$, the graph $K_4/e$ is a triangle with two doubled sides.
Contracting a doubled side leaves three parallel edges. Both two and
three parallel edges realize every balanced boundary modulo three, so
Lemma~\ref{lem:critical-reduced} shows that $K_4/e$ is strongly
$\Z_3$-connected. Thus $K_4$ is critical.

On five vertices, minimum degree three and simplicity imply that the
complement is a matching. Theorem~\ref{thm:reduced-density} gives $m\le9$,
so this matching has one or two edges. If it has one edge $xy$, the graph
is $K_5-xy$. Direct every edge at $x$ away from $x$, every edge at $y$
toward $y$, and the triangle on the remaining three vertices cyclically.
This gives boundary zero modulo three. If the complement has two edges,
the graph is the wheel with four rim vertices. Orient the rim vertices
alternately as sources and sinks; the hub then has two edges in each
direction. Again the boundary is zero, excluding criticality.

For the six-vertex example, take adjacent vertices $a,b$ and two disjoint
pairs $x_i,y_i$, for $i=1,2$. Add each edge $x_i y_i$ and join both its
ends to both $a,b$; call the resulting graph $J$. In a zero-boundary
orientation the cubic vertices $x_i,y_i$ would be opposite sources and
sinks. Their contributions at each of $a,b$ would cancel, leaving only
the nonzero contribution of $ab$. Thus $J$ has no three-flow.
For any edge $e$, choose a copy of $K_4$ in $J$ that contains $e$.
Its image in $J/e$ is strongly $\Z_3$-connected, as just proved;
contracting that image leaves the other copy with $ab$ contracted,
which is also strongly $\Z_3$-connected. Lemma~\ref{lem:critical-reduced}
therefore shows that every $J/e$ is strongly $\Z_3$-connected.
Hence $J$ is critical and has $11=3\cdot6-7$ edges.
\end{proof}

The six-vertex example explains why the following conjecture begins at
seven vertices.
Li, Ma, Shi, Wang and Wu~\cite[Conjecture~1.3]{LMSWW} conjecture
$m\le3n-8$ for every three-flow-critical graph on $n\ge7$ vertices.
This would be sharp for every such $n$: add an edge joining two vertices
in the three-vertex part of the complete bipartite graph $K_{3,n-3}$. The resulting graph
$K^+_{3,n-3}$ is three-flow-critical~\cite[Proposition~2.6]{LMSWW} and
has $3(n-3)+1=3n-8$ edges. The appendix bound is therefore one edge
above the conjectured optimum; this conjecture remains open.

\section{Seymour's six-flow theorem}\label{sec:seymour}

\begin{theorem}[Seymour~\cite{Seymour}]\label{thm:seymour}
Every bridgeless graph has a nowhere-zero integer $6$-flow.
\end{theorem}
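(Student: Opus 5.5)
The plan is to get the six-flow theorem from Theorem~\ref{thm:nonzero-connectivity} with $q=6$. That theorem needs $\tau_f(G)\ge 6/4=3/2$. I will first reduce to $3$-edge-connected graphs, and then check that $3$-edge-connectivity already forces $\tau_f>3/2$ through~\eqref{eq:lambda-tau}. No modular-orientation rounding beyond what Theorem~\ref{thm:interval} already provides should be needed.

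\emph{Reduction.} Take a counterexample $G$ with the fewest edges.

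\begin{itemize}
\item Loops can be removed and later given value $1$. Components can be treated separately, and a graph with one vertex is trivial.
\item So $G$ is connected, loopless, bridgeless and has $n\ge2$ vertices.
\item Suppose $G$ has a $2$-edge-cut $\{e,f\}=\delta_G(S)$. The contraction $G/e$ is still bridgeless: a cut of $G/e$ is a cut of $G$ avoiding $e$, so it has size at least $2$.
\item By minimality, $G/e$ has a nowhere-zero integer $6$-flow. Give every edge other than $e$ its value from $G/e$; the edge $e$ is the only missing value.
\item Conservation summed over $S$ forces the value on $e$ to be $\pm$ the value on $f$, with the sign fixed by the reference directions.
\item That value satisfies conservation at both endpoints of $e$: the boundary equation of $G/e$ at the merged vertex holds, and the equation at one endpoint of $e$ determines the value on $e$. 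It is nonzero and bounded by $5$ in magnitude.
\item This gives a nowhere-zero $6$-flow of $G$, a contradiction. Hence $\lambda(G)\ge3$.
\end{itemize}

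\emph{Main step.} For connected $G$ with $\lambda(G)\ge3$ and $n\ge2$, inequality~\eqref{eq:lambda-tau} gives
\[
 \tau_f(G)\ge\frac{3n}{2(n-1)}>\frac32=\frac{6}{6-2}.
\]
Theorem~\ref{thm:nonzero-connectivity} with $q=6$ then yields a nowhere-zero $\Z_6$-flow. The same theorem, via Theorem~\ref{thm:interval} with intervals $[1,5]$ and zero boundary lifted through the rounding argument of Section~\ref{sec:flows}, gives the nowhere-zero integer $6$-flow.

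\emph{Expected obstacle.} The only delicate point is the contraction step. I must check that contracting $e$ never creates a bridge and that the extended value really satisfies conservation at both endpoints of $e$, including when $e$ has parallel copies. If $e$ has a parallel edge, then $\{e,f\}$ being a $2$-cut means $f$ is that parallel edge, and the same sign argument applies. If this reduction turned out awkward, a fallback is to apply the main step directly to $3$-edge-connected blocks obtained by splitting along $2$-cuts, but the minimal-counterexample form should suffice.
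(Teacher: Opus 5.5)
Your proposal is correct and follows the paper's proof. Both reduce to $3$-edge-connected graphs, use \eqref{eq:lambda-tau} to get $\tau_f(G)\ge 3n/(2(n-1))>3/2$, and apply Theorem~\ref{thm:nonzero-connectivity} with $q=6$. The only difference is that the paper cites the standard reduction to cubic $3$-edge-connected graphs from~\cite{DFRS}, whereas you prove the reduction to $3$-edge-connectivity yourself by contracting an edge of a $2$-edge-cut; this suffices because the estimate never uses cubicity.
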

\begin{proof}
By the standard reduction for nowhere-zero flows~\cite[Section~2]{DFRS}, it suffices to consider connected cubic graphs (all degrees three) that are three-edge-connected. For such a graph, \eqref{eq:lambda-tau} gives
\[
 \tau_f(G)\ge\frac{3n}{2(n-1)}>\frac32.
\]
Theorem~\ref{thm:nonzero-connectivity} with $q=6$ supplies a nowhere-zero integer $6$-flow.
\end{proof}

\section{Circular flows}\label{sec:circular}
A \emph{circular $r$-flow}, for real $r\ge2$, is a real circulation with $1\le|f_e|\le r-1$. The infimum of such $r$ is the \emph{circular flow number} $F_c(G)$.

\begin{theorem}[Circular flows]\label{thm:circular}
Every connected bridgeless graph satisfies
\[
 F_c(G)\le2+\frac{2}{\tau_f(G)-1}.
\]
In particular, for every integer $s\ge1$, every $(4s+2)$-edge-connected graph has
\[
 F_c(G)<2+\frac1s
\]
and realizes every balanced boundary modulo $2s+1$ by a directed strongly connected orientation.
\end{theorem}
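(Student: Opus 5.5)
The plan is to run the same approximation-and-rounding scheme as in Theorem~\ref{thm:fractional}, but with the target vector centred at $\tfrac12\one$ and no modular lift. First I reduce to $\tau=\tau_f(G)>1$. A connected bridgeless graph has $\lambda(G)\ge2$, so~\eqref{eq:lambda-tau} gives $\tau\ge n/(n-1)>1$. Hence Lemma~\ref{lem:rank} applies, and~\eqref{eq:cycle-approx} is available.

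Apply~\eqref{eq:cycle-approx} with $b=\tfrac12\one$ to get $x\in\R^d$ and $z\in\Z^E$, and set $X=Ux-z$. Then $|X_e-\tfrac12|\le 1/(2\tau)$, so every coordinate lies in $[\alpha,1-\alpha]$ with
\[
 \alpha=\frac{\tau-1}{2\tau}>0.
\]
Since $BU=0$, we have $BX=-Bz\in\Z^V$. Lemma~\ref{lem:round} gives $y\in\{0,1\}^E$ with $By=BX$. Then $g=X-y$ is a real circulation. Each $g_e$ lies in $[\alpha,1-\alpha]$ when $y_e=0$ and in $[\alpha-1,-\alpha]$ when $y_e=1$, so $\alpha\le|g_e|\le1-\alpha$ on every edge. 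This mirrors the rounding proof of Tutte's modular-flow theorem in Section~\ref{sec:flows}.

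Next rescale by setting $f=g/\alpha$. This is a circulation with
\[
 1\le|f_e|\le\frac{1-\alpha}{\alpha}=\frac{\tau+1}{\tau-1}=1+\frac2{\tau-1}.
\]
So $f$ is a circular $r$-flow with $r=2+2/(\tau-1)$, which gives the bound on $F_c(G)$.

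For the second statement, suppose $\lambda(G)\ge4s+2$. Then~\eqref{eq:lambda-tau} gives $\tau\ge(2s+1)n/(n-1)>2s+1$, hence $2/(\tau-1)<1/s$ and $F_c(G)<2+1/s$. The strict inequality $\tau>2s+1$ is also condition (3) of Theorem~\ref{thm:strong-exact} with $q=2s+1$. So every compatible outdegree prescription modulo $2s+1$ has a strongly connected realization. The odd-modulus conversion of Section~\ref{sec:residues} turns each balanced boundary modulo $2s+1$ into a compatible prescription, which finishes the proof.

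The main point needing care is that the cycle-space approximation controls $Ux$ only modulo integers. The upper bound $|f_e|\le r-1$ therefore has to come from the rounding step rather than from the approximation itself. Subtracting $z$ places $X$ in the cube, and Lemma~\ref{lem:round} then yields a genuine bounded circulation. The remaining checks are routine.
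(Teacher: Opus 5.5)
Your proof is correct and follows the paper's argument. Your $X=Ux-z$ with $b=\tfrac12\one$ is exactly the $q=1$ construction of Theorem~\ref{thm:fractional} (with $h=0$ and $a=0$), your $\alpha$ is half of the paper's $\alpha=1-1/\tau$, and the same rounding and rescaling give $(1-\alpha)/\alpha=1+2/(\tau-1)$; for the second part, invoking (3)$\Rightarrow$(1) of Theorem~\ref{thm:strong-exact} is the same as the paper's direct appeal to Theorem~\ref{thm:fractional} with $q=2s+1$, followed by the odd-modulus conversion.
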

\begin{proof}
Bridgelessness gives $\lambda(G)\ge2$, so~\eqref{eq:lambda-tau} implies $\tau=\tau_f(G)>1$. Put $\alpha=1-1/\tau$. The construction of Theorem~\ref{thm:fractional} at $q=1$ gives $X\in[\alpha/2,1-\alpha/2]^E$ with integral boundary. Round it to $y\in\{0,1\}^E$ with $By=BX$. Then $(2/\alpha)(X-y)$ is a circulation with magnitudes between $1$ and $2/\alpha-1$, proving the first bound.

If $\lambda(G)\ge4s+2$, inequality~\eqref{eq:lambda-tau} gives $\tau\ge(2s+1)n/(n-1)>2s+1$. Hence
\[
 F_c(G)\le2+\frac{2(n-1)}{2sn+1}<2+\frac1s.
\]
The directed-strong conclusion follows from Theorem~\ref{thm:fractional} with $q=2s+1$ and the conversion from boundaries to outdegrees.
\end{proof}

At $s=1$, the balanced-boundary assertion recovers the six-edge-connected modulo-three orientation guarantee of Lov\'asz, Thomassen, Wu and Zhang~\cite{LTWZ}. Taking zero boundary gives their nowhere-zero $3$-flow consequence. The strict inequality $F_c(G)<3$ proves the conjecture of Li, Thomassen, Wu and Zhang~\cite[Conjecture~1.3]{LTWZindex}. Their Theorem~1.1 identifies $F_c(G)<2+1/s$ with the existence of a directed strongly connected realization of the zero boundary modulo $2s+1$. The assertion above allows an arbitrary balanced boundary.

\section{Antisymmetric flows}\label{sec:antisymmetric}
In a fixed orientation, a \emph{$\Z_q$-flow} $f$ is a circulation with values in $\Z_q$. Write $f(E)=\{f_e:e\in E\}$ and $-f(E)=\{-f_e:e\in E\}$. The flow is \emph{antisymmetric} if $f(E)\cap(-f(E))=\varnothing$; in particular, no value is zero or self-inverse.

\begin{theorem}[Antisymmetric flows]\label{thm:antisymmetric}
Let $s\ge2$ be an integer. If $\tau_f(G)\ge(2s+1)/(s-1)$, then, in every fixed orientation, every balanced boundary modulo $2s+1$ has a realization with edge values in $\{1,\ldots,s\}$. In particular, every orientation admits an antisymmetric $\Z_{2s+1}$-flow. The same conclusions hold if the edge-connectivity is at least $\lceil(4s+2)/(s-1)\rceil$. For $s=2$ this gives an antisymmetric $\Z_5$-flow in every orientation of a ten-edge-connected graph.
\end{theorem}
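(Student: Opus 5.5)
This reduces to Theorem~\ref{thm:interval} with the interval $[1,s]$ of length $L=s-1$ and modulus $q=2s+1$. Fix the orientation and its incidence matrix $B$. Let $\beta$ be a balanced boundary modulo $2s+1$. Apply Theorem~\ref{thm:interval} with $r=\one$ and $L=s-1$. This is admissible: $1\le L<q$ holds for $s\ge2$, and the hypothesis $\tau_f(G)\ge(2s+1)/(s-1)$ is exactly $L\tau_f(G)\ge q$. We obtain an integer vector $f$ with $Bf\equiv\beta\pmod{2s+1}$ and $1\le f_e\le s$ for every edge. Since the reference orientation in Theorem~\ref{thm:interval} is arbitrary, we may take it to be the given fixed orientation. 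This gives the first claim.

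For the antisymmetric flow, take $\beta=0$ and reduce $f$ modulo $2s+1$. Then $f(E)\subseteq\{1,\ldots,s\}$ and $-f(E)\subseteq\{s+1,\ldots,2s\}$ in $\Z_{2s+1}$, so the two sets are disjoint.

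For the edge-connectivity version, suppose $\lambda=\lambda(G)\ge\lceil(4s+2)/(s-1)\rceil$, so $\lambda\ge 2(2s+1)/(s-1)$. By~\eqref{eq:lambda-tau},
\[
 \tau_f(G)\ge\frac{n\lambda}{2(n-1)}>\frac\lambda2\ge\frac{2s+1}{s-1},
\]
so the fractional hypothesis holds and the previous paragraph applies. A connected graph is needed here, but $\lambda\ge1$ already gives that. For $s=2$ the threshold is $\lceil 10/1\rceil=10$, which gives the stated $\Z_5$ consequence.

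The only point that needs care is the bookkeeping in the interval theorem. Its proof applies~\eqref{eq:cycle-approx}, which needs $\tau_f>1$; this holds because $(2s+1)/(s-1)>1$. The rounding step keeps values within $[r_e,r_e+L]=[1,s]$ because the endpoints are integers. I expect no genuine obstacle beyond matching the parameters $q=2s+1$, $L=s-1$ and $r=\one$.
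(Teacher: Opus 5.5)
Your proposal is correct and is essentially the paper's proof: apply Theorem~\ref{thm:interval} with $q=2s+1$, $r=\one$ and $L=s-1$ (intervals $[1,s]$), take zero boundary for the antisymmetric flow, and obtain the fractional hypothesis from the edge-connectivity one via~\eqref{eq:lambda-tau}. Your explicit checks of $1\le L<q$, $L\tau_f(G)\ge q$ and $\tau_f(G)>1$, and of the disjointness of $\{1,\ldots,s\}$ from $\{s+1,\ldots,2s\}$ in $\Z_{2s+1}$, spell out details that the paper's two-line proof leaves implicit.
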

\begin{proof}
Apply Theorem~\ref{thm:interval} with $q=2s+1$ and intervals $[1,s]$, then take zero boundary. The edge-connectivity hypothesis implies the fractional one by~\eqref{eq:lambda-tau}.
\end{proof}
Esperet, de Joannis de Verclos, Le and Thomass\'e~\cite{EJLT} gave the sufficient edge-connectivity $\lceil6s/(s-1)\rceil$, which is twelve at modulus five. The ten-edge-connected conclusion also removes the planarity assumption from the corresponding result of Cranston, Li, Su, Wang and Wei~\cite{CLSWWei}.

\section{Signed modular orientations}\label{sec:signed}

A \emph{signed graph} assigns a sign to each edge; this assignment is its \emph{signature}. Here loops and one-vertex graphs are allowed. Each edge has two \emph{half-edges}, one at each endpoint, or both at the same vertex for a loop.
In a signed orientation, a positive edge has one outgoing and one incoming
half-edge, while a negative edge has either both outgoing or both incoming.
The signed boundary is $\partial_D(v)=d_D^+(v)-d_D^-(v)$, counting half-edges;
thus a negative loop contributes $2$ or $-2$. A \emph{modulo $k$-orientation}
has $\partial_D(v)\equiv0\pmod k$ at every vertex.

Switching at a vertex reverses its incident half-edges and the signs of its
incident non-loop edges. It negates the boundary at that vertex and leaves the other boundary coordinates unchanged; in particular, zero boundary is preserved. Let $\epsilon(G)$, the \emph{negativeness}, be the minimum number of negative
edges among all switching-equivalent signatures. The graph is
\emph{essentially $k$-unbalanced} if every such signature has either an even
number of negative edges or at least $k$ negative edges. Edge-connectivity
refers to the underlying unsigned graph.

For a signed graph without positive loops, fix a reference orientation
with every negative edge directed outward. Its incidence matrix $B$ has
columns $e_u-e_v$ for positive edges $u\to v$, $e_u+e_v$ for negative
edges $uv$, and $2e_v$ for negative loops at $v$; the $e_v$ are coordinate
vectors. For $y\in\{0,1\}^E$, keep the reference direction on edges with $y_e=1$ and reverse both half-edges when $y_e=0$; the resulting boundary is $B(2y-\one)$. Write $B\Z^E=\{Bz:z\in\Z^E\}$ for the integer boundary lattice.

Subdividing positive edges converts signed boundary equations into degree equations in an ordinary graph. A uniform margin from $0$ and $1$ will give the odd-set inequalities used to round the resulting fractional edge values.

\begin{lemma}[Signed interior rounding]\label{lem:signed-rounding}
Let $G$ have at least two vertices and no positive loops, and take $B$ as
above. If $0<\alpha\le\tfrac12$, $\alpha\lambda(G)\ge1$, and
\begin{equation}\label{eq:signed-rounding-hyp}
 X\in[\alpha,1-\alpha]^E,\qquad BX\in B\Z^E,
\end{equation}
then there exists $y\in\{0,1\}^E$ with $By=BX$.
\end{lemma}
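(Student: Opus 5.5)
The plan is to convert the signed boundary equation into a perfect $b$-matching (degree-factor) problem in an auxiliary ordinary graph $H$. Then Edmonds's description of the $b$-matching polytope, in its capacitated form with odd-set constraints, certifies that the interior point $X$ is a convex combination of integral solutions. The margin $\alpha$ together with $\alpha\lambda(G)\ge1$ is exactly what pays for the odd-set inequalities.

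\emph{Construction of $H$.} I would build $H$ as follows.
\begin{itemize}
\item Each negative non-loop edge $uv$ stays an edge $uv$ with value $X_e$.
\item Each positive edge $u\to v$ is subdivided by a new vertex $w_e$. The half $uw_e$ carries $X_e$, the half $w_ev$ carries $1-X_e$, and we set $b(w_e)=1$.
\item Each negative loop at $v$ is replaced by a path $v\,w_1\,w_2\,v$ with values $X_e,1-X_e,X_e$ and $b(w_1)=b(w_2)=1$. This forces the two end edges to agree, so the loop contributes $2X_e$ at $v$.
\end{itemize}
At an original vertex put $b(v)=(BX)_v+d^-_{+}(v)$, where $d^-_{+}(v)$ counts positive reference edges entering $v$. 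Then the vector $x$ on $E(H)$ has degree exactly $b$ everywhere. Moreover $b$ is integral, since $BX$ is. A $0/1$ vector on $E(H)$ with degrees $b$ translates back to $y\in\{0,1\}^E$ with $By=BX$, because the gadgets force the edges of each gadget to take complementary or equal values as required. All coordinates of $x$ lie in $[\alpha,1-\alpha]$.

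\emph{Verifying Edmonds's conditions.} By Edmonds's theorem (simple capacitated $b$-matchings), $x$ lies in the convex hull of $0/1$ $b$-factors once, for every $S\subseteq V(H)$ and $F\subseteq\delta_H(S)$ with $b(S)+|F|$ odd,
\[
 x(\delta_H(S)\setminus F)+|F|-x(F)\ge1 .
\]
The left side is at least $\alpha|\delta_H(S)|$. I would split into cases.
\begin{enumerate}
\item If $S\cap V(G)$ is a nonempty proper subset of $V(G)$, every edge of $\delta_G(S\cap V(G))$ contributes at least one edge of $\delta_H(S)$. Hence $|\delta_H(S)|\ge\lambda(G)\ge1/\alpha$, and the inequality holds.
\item If $\delta_H(S)=\varnothing$: since $\lambda(G)\ge1/\alpha>0$ and $n\ge2$, $G$ and hence $H$ is connected, so $S$ is $\varnothing$ or $V(H)$. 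Here I use $BX\in B\Z^E$. Choosing $z\in\Z^E$ with $Bz=BX$ and forming the same gadget vector from $z$ gives an integer edge vector with degrees $b$. Thus $b(V(H))$ is twice an integer, so this parity case never arises.
\item If $S\cap V(G)$ is empty (the case of all of $V(G)$ follows by complementation), $S$ consists only of gadget vertices. The left side then splits into a sum over the gadget pieces met by $S$, each a path with internal vertices of demand $1$. The odd parity forces some piece to have an odd local parity, and for a single path gadget the inequality follows directly from the degree equations and $0\le x\le1$ (a path is bipartite, so its local polytope is integral). Summing the nonnegative pieces gives at least $1$.
\end{enumerate}

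\emph{Main obstacle.} I expect the main difficulty to be this last case: carrying out the decomposition cleanly when $S$ contains vertices from several gadgets and $F$ is spread across them, including the loop gadgets. The first thing I would try is to show that each gadget term is individually nonnegative, and equals at least $1$ when its local parity is odd, by direct computation on a path of length two or three.
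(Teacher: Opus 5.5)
Your proposal is correct and follows the paper's proof essentially step for step: the same subdivision gadget for positive edges and the same triangle gadget (values $X_e,1-X_e,X_e$) for negative loops, the same demands, the $f$-factor polytope with odd-set inequalities, and the same three cases (a proper shore paid for by $\alpha\lambda(G)\ge1$, gadget-only sets split into pieces that each contribute at least one when locally odd, and complementation via even total demand). The differences are cosmetic. You get even total demand by pushing an integer preimage $z$ with $Bz=BX$ through the gadgets, rather than from the even column sums of $B$. You settle the pieces by integrality of the local system, rather than by the paper's explicit check of pieces of size one and two. Note that your loop gadget $v\,w_1\,w_2\,v$ is a triangle, not a bipartite path, but the degree constraints at $w_1,w_2$ alone are still totally unimodular, so the argument stands.
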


\begin{proof}
Put $h=BX$. For a vertex function $f$, write $f(W)=\sum_{v\in W}f(v)$.
Subdivide every positive edge $u\to v$ into $uw_e,w_ev$,
with values $X_e,1-X_e$, and retain each negative non-loop edge with
value $X_e$. Replace a negative loop $e$ at $v$ by a triangle on $v$
and two new vertices. Give the two edges at $v$ value $X_e$, the third
edge value $1-X_e$, and both new vertices demand one.
This gives a loopless multigraph $J$ with fractional edge vector $x$ and
integer degree demands
\[
 f(v)=h_v+d^-_{0,+}(v),\qquad f(w_e)=1,
\]
where $v$ ranges over the original vertices, $w_e$ ranges over the subdivision vertices, and $d^-_{0,+}$ counts incoming reference positive edges. The demands are
nonnegative because $x$ realizes them. Each positive-edge subdivision
and each loop triangle contributes two to the total demand in addition
to the terms $h_v$. Thus $f(V(J))$ is even, since $h\in B\Z^E$ and every
column of $B$ has even coordinate sum.

An $f$-factor is a spanning subgraph with degree $f(v)$ at each vertex,
and the convex hull of its binary edge vectors is
described~\cite[Appendix~C, p.~56 of the arXiv version]{Gabow} by the degree equations, $0\le x\le1$, and, for every $W\subseteq V(J)$,
\begin{equation}\label{eq:signed-blossom}
 \sum_{e\in\delta_J(W)\setminus F}x_e+
 \sum_{e\in F}(1-x_e)\ge1
 \quad\text{if }F\subseteq\delta_J(W),\ f(W)+|F|\text{ is odd}.
\end{equation}
If $W\cap V(G)$ is a nonempty proper shore, each edge of $\delta_G(W\cap V(G))$ contributes a crossing edge of $J$: a retained negative edge, or at least one of the two edges replacing a positive edge. Thus the left side is at least
$\alpha|\delta_J(W)|\ge\alpha\lambda(G)\ge1$.
Suppose $W$ contains no original vertex. Separate $W$ into the
subdivision vertices it contains and the nonempty intersections with the
pairs of new vertices in the loop triangles. Each piece has its own
crossing edges, and these sets partition $\delta_J(W)$. A piece of size
one has demand one and two crossing edges with complementary values.
It contributes one to the left side if the number of those edges in $F$
is even. A piece of size two has demand two and two crossing edges with
equal values. It contributes one if exactly one of those edges is in $F$.
Thus any piece whose demand plus its number of edges in $F$ is odd
contributes one. The parity condition in~\eqref{eq:signed-blossom}
guarantees such a piece. If $W$ contains every original vertex, apply
this argument to its complement, using the even total demand.
Thus all inequalities hold, so an integral $f$-factor exists. For each positive edge $e=u\to v$, take $y_e$ to be the factor indicator of $uw_e$. The indicator of $w_ev$ is then $1-y_e$ because $f(w_e)=1$. For each retained negative edge, copy its factor indicator to $y_e$. In each loop triangle the two new
vertices have degree one in the factor, so either both edges at the
original vertex are selected or only the edge between the new vertices
is selected. Set $y_e$ to one in the first case and zero in the second.
The original-vertex degree equations give $By=h$.
\end{proof}

\begin{theorem}[Signed boundary prescriptions]\label{thm:signed-boundaries}
Let $k\ge3$ be odd. If a signed graph $G$ on at least two vertices satisfies
\[
 \lambda(G)\ge2k+2,\qquad \epsilon(G)\ge k-1,
\]
then every function $\beta:V(G)\to\Z/k\Z$ is realized by a signed
orientation $D$ with $\partial_D(v)\equiv\beta(v)\pmod k$ for every $v$.
There is no zero-sum condition on $\beta$.
\end{theorem}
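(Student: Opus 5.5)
We follow the template of Theorem~\ref{thm:fractional}: build a fractional signed orientation $X$ with boundary $BX$ in the correct residue class and uniformly inside the cube, then round with Lemma~\ref{lem:signed-rounding}. The boundary of the orientation determined by $y$ is $B(2y-\one)$, so we need $y\in\{0,1\}^E$ with $2By-B\one\equiv\beta\pmod k$. Since $k$ is odd, $2$ is invertible, so it suffices to hit a target $h\in B\Z^E$ with $h\equiv (\beta+B\one)/2\pmod k$.

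\textbf{Step 1 (lattice).} We first show that $h$ can be chosen in $B\Z^E$. The signed incidence lattice $B\Z^E$ has finite index in $\Z^V$ whenever the graph is connected and unbalanced. Modulo $k$ ($k$ odd), the image of $B$ over $\Z_k$ is all of $\Z_k^V$ once some cycle is negative. Indeed, the positive edges give the zero-sum vectors, and a negative edge or an unbalanced cycle gives a vector of odd total, which is invertible modulo $k$. Since $\epsilon(G)\ge k-1\ge2$, $G$ is unbalanced. Hence there is $a\in\Z^E$ with $Ba\equiv(\beta+B\one)/2\pmod k$. In particular, no zero-sum condition on $\beta$ is needed.

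\textbf{Step 2 (approximation).} Next we need a "cycle space" $\ker B$ for the signed incidence matrix, an integer basis $U$ of it, and a rank identity analogous to Lemma~\ref{lem:rank}. For an edge set $F$, $\rank(U_F)=|F|-(\rank B-\rank B_{E\setminus F})$. The loss counts new balanced components created by deleting $F$, weighted appropriately: $\rank B_{E'}=n-\#\{\text{balanced components of }(V,E')\}$. Lemma~\ref{lem:approx} applied to $U$ with $b=(\tfrac12\one-a)/k$ then gives
\[
X=a+kUx-kz,\qquad |X_e-\tfrac12|\le\frac{k(1-\mu)}{2},\qquad BX=Ba-kBz\in B\Z^E.
\]
The requirement is to prove $1-\mu\le(1-2\alpha)/k$ with $\alpha\lambda(G)\ge1$, for instance $\alpha=1/\lambda$.

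\textbf{Step 3 (the main obstacle).} This is the partition-type bound $\mu\ge1-(1-2/\lambda)/k$, i.e.
\[
|F|-\rank(U_F)\le\frac{1-2/\lambda}{k}\,|F|
\]
for every $F$. Deleting $F$ produces components $C_1,\dots,C_r$, of which some number $t$ are balanced. The rank loss is at most $t$ (minus one if $G-F$ stays unbalanced, etc.). Each component's cut has at least $\lambda$ edges, giving $|F|\ge r\lambda/2$. When a single component is balanced while the rest is not, the cut $F$ around it is small only if the signature has few negative edges. Here $\epsilon\ge k-1$ enters: a balanced shore $S$ whose complement is unbalanced forces, after switching, negative edges at least $\epsilon$ in total, and these must lie in $F$ or in the complement. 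I would try to show that whenever the loss is $t$, $|F|\ge tk\lambda/(\lambda-2)$, by combining $|\delta(C_i)|\ge\lambda\ge2k+2$ with the negativeness count in the case where everything becomes balanced. The worst case is $G-F$ entirely balanced with loss $r$ relative to rank $n$. Then all negative edges (after switching) lie in $F$, giving $|F|\ge\max(r\lambda/2,\epsilon)$. The edge-connectivity margin $2k+2$ versus $2k$ is what should absorb the extra $2/\lambda$ term. Checking these inequalities with the exact constants is where I expect the argument to be tight and fiddly.

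\textbf{Step 4 (rounding).} With $X\in[\alpha,1-\alpha]^E$, $\alpha\lambda\ge1$, and $BX\in B\Z^E$, Lemma~\ref{lem:signed-rounding} gives $y$ with $By=BX\equiv Ba\pmod k$. The orientation encoded by $y$ then has boundary $2By-B\one\equiv\beta\pmod k$.
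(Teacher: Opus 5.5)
Your plan breaks down at Step~3, and not because the constants are fiddly: the inequality you need is false when $\epsilon(G)\in\{k-1,k\}$.

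Switch to a minimum signature and let $N$ be its set of $\epsilon(G)\le k$ negative edges. Since $|N|<\lambda(G)$, the graph $G-N$ is connected. Being all positive, it is a single balanced component. Your rank formula then gives $\rank(U_N)=|N|-1$, so $1-\mu\ge1/\epsilon(G)\ge1/k>(1-2\alpha)/k$ for every $\alpha>0$. The connectivity margin $2k+2$ cannot help, because this $F$ separates nothing. Lemma~\ref{lem:approx} then only guarantees $|X_e-\tfrac12|\le k(1-\mu)/2$, a bound of at least $\tfrac12$. So there is no interior margin for Lemma~\ref{lem:signed-rounding}, and when $\epsilon(G)=k-1$ the bound does not even place $X$ in $[0,1]^E$.

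The obstruction is intrinsic, not an artifact of the lemma. With negative edges directed outward, $\one^{\mathsf T}BX=2\sum_{e\in N}X_e$. The condition $BX\in Ba+kB\Z^E$ forces $\sum_{e\in N}X_e$ to be an integer whose class modulo $k$ is determined by $\sum_v\beta(v)$, and that sum is arbitrary. For $X\in(0,1)^E$ this integer lies in $\{1,\ldots,|N|-1\}$, which misses the class $0$ (and also $k-1$ when $|N|=k-1$). Hence for suitable $\beta$ no interior fractional solution exists at all. Switching and reversing reference directions act by sign changes and preserve interior points, so the choice of frame does not matter.

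The missing idea is that these two values of $\epsilon(G)$ need a separate argument without approximation, and this is how the paper proceeds. Take exactly $t=\epsilon(G)$ negative edges. Every partition into $r\ge2$ parts has at least $r\lambda(G)/2-t\ge k(r-1)$ positive crossing edges, so the positive spanning subgraph contains $k$ edge-disjoint spanning trees. Orienting $j$ negative edges outward and $t-j$ inward contributes total boundary $4j-2t$. The values $j=0,\ldots,k-1$ are available since $t\ge k-1$, and they cover every residue because $k$ is odd. Choose $j$ so that the residual prescription is balanced, realize it on the positive subgraph by Theorem~\ref{thm:ordinary-residues}, and switch back.

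Your approximation-and-rounding route is exactly the paper's argument for $\epsilon(G)\ge k+1$. There the clean dichotomy, with $b(G-F)$ the number of balanced components, is as follows. Either every balanced component of $G-F$ is a proper shore, giving $b(G-F)\lambda(G)\le2|F|$. Or $G-F$ is one balanced spanning component, giving $|F|\ge\epsilon(G)\ge k+1$. Both cases yield $\rank(U_F)\ge\frac{k}{k+1}|F|$ and the margin $\alpha=1/[2(k+1)]$.

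Two smaller corrections. First, every column of $B$ has coordinate sum $0$ or $\pm2$, so no vector in the image has odd total. The relevant point in Step~1 is that $2$ is a unit modulo odd $k$, after first switching a spanning tree to be positive so that the positive columns generate all zero-sum vectors. Second, positive loops must be deleted before Lemma~\ref{lem:signed-rounding} applies.
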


\begin{proof}
Delete positive loops, to be restored arbitrarily. If $t=\epsilon(G)\in\{k-1,k\}$,
switch to a signature with exactly $t$ negative edges and negate $\beta$
at the switched vertices. For any partition into $r\ge2$ parts, the
positive spanning subgraph has at least $r\lambda(G)/2-t\ge k(r-1)$
crossing edges. It therefore contains $k$ edge-disjoint spanning trees.
Orienting $j$ negative edges outward and $t-j$ inward contributes total
boundary $4j-2t$. Every $j\in\{0,\ldots,k-1\}$ is feasible because $t\ge k-1$;
these totals cover every residue modulo $k$, since $k$ is odd. Choose the total to match the switched
prescription and use Theorem~\ref{thm:ordinary-residues} on the positive
subgraph to realize the balanced residual boundary. Switching back
gives the required orientation.

Now assume $\epsilon(G)\ge k+1$ and take $B$ as above.
A signed component is \emph{balanced} if it can be
switched to all positive edges; let $b(H)$ count the balanced components
of $H$, including isolated vertices. The signed incidence rank is
$|V(H)|-b(H)$: a left-kernel vector has equal values across positive
edges and opposite values across negative edges, with one free value on
each balanced component and zero on every unbalanced component.
Since $\epsilon(G)>0$, $G$ is unbalanced. Choose independent integer
columns $U$ spanning $\ker B$, by clearing denominators in a rational
nullspace basis. Restriction to $F\subseteq E$ gives
\[
 \rank(U_F)=|F|-b(G-F).
\]
Indeed, $G$ is connected and unbalanced, so $\dim\ker B=|E|-|V|$. The kernel of restriction to $F$ consists of vectors supported on $G-F$ with zero signed boundary, a space of dimension $|E|-|F|-|V|+b(G-F)$. Subtracting gives the rank formula.
If every balanced component of $G-F$ is a proper shore, summing its cut
sizes gives $b(G-F)\lambda(G)\le2|F|$. The only other case is a balanced
spanning component: switching it positive puts every negative edge of $G$
in $F$, so $|F|\ge\epsilon(G)$. In either case,
\[
 \rank(U_F)\ge\frac{k}{k+1}|F|.
\]
For nonempty $F$ this bound also ensures that $U$ has no zero rows and a
nonzero column space. Lemma~\ref{lem:approx} therefore gives, for any
shift, an error at most $1/[2(k+1)]$.

The map $(\Z_k)^E\to(\Z_k)^V$ induced by $B$ is onto. Switching and changing reference directions act by row and column sign changes, so we may switch a spanning tree positive and take every negative edge outward. The tree columns, with any root row removed, give a square matrix of determinant $\pm1$ by successive elimination of leaves. Adjoin a negative edge column to the full tree columns and replace the root row by the sum of all rows. The new root row is zero in the tree columns and has entry $2$ in the negative edge column, including for a negative loop. Expansion along that row gives determinant $\pm2$, which is invertible modulo odd $k$.
Choose $u\in\Z^E$ with $Bu\equiv(\beta+B\one)/2\pmod k$.
Apply Lemma~\ref{lem:approx} with shift $(\tfrac12\one-u)/k$, obtaining
$x,z$, and put
\[
 X=\tfrac12\one+k\left(Ux-\frac{\tfrac12\one-u}{k}-z\right)
   =u+kUx-kz.
\]
Then $BX=Bu-kBz\in B\Z^E$ and
$X\in[\alpha,1-\alpha]^E$ for $\alpha=1/[2(k+1)]$.
Since $\alpha\lambda(G)\ge1$, Lemma~\ref{lem:signed-rounding} gives $y$ with
$By=BX$, and hence $B(2y-\one)\equiv\beta\pmod k$.
\end{proof}

\begin{theorem}[Essentially unbalanced signed graphs]\label{thm:signed-modulo}
Let $k\ge3$ be odd. Every essentially $k$-unbalanced signed graph in which
every cut separating two nonempty vertex sets has size at least $2k+2$ has a modulo $k$-orientation.
The cut hypothesis is vacuous for a one-vertex graph.
\end{theorem}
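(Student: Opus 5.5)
Theorem~\ref{thm:signed-modulo} will follow from Theorem~\ref{thm:signed-boundaries} with $\beta=0$, after disposing of the cases that theorem excludes. Essential $k$-unbalance constrains only the signatures with few negative edges. I therefore split according to the negativeness $t=\epsilon(G)$ and, when needed, switch to a signature with exactly $t$ negative edges. Switching preserves zero boundary, so a modulo $k$-orientation of the switched graph switches back to one of $G$.

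\emph{Case $t\ge k-1$ with at least two vertices.} Here the edge-connectivity is at least $2k+2$. Theorem~\ref{thm:signed-boundaries} with $\beta\equiv0$ gives the orientation directly. Positive loops contribute zero boundary in either direction, so they cause no trouble.

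\emph{Case $t\le k-2$ with at least two vertices.} By essential $k$-unbalance, $t$ is even, and $t=0$ is allowed. Switch to exactly $t$ negative edges and orient $t/2$ of them outward and $t/2$ inward. Their total contribution to the boundary is then $4(t/2)-2t=0$.
\begin{itemize}
\item As in the first part of the proof of Theorem~\ref{thm:signed-boundaries}, the positive spanning subgraph crosses every $r$-part partition at least $r(k+1)-t\ge k(r-1)+2$ times. It therefore contains $k$ edge-disjoint spanning trees.
\item The residual boundary on the positive subgraph is minus the negative-edge contributions. Its sum is zero, so it is balanced modulo $k$.
\item Theorem~\ref{thm:ordinary-residues}, via its odd-modulus conversion, realizes this residual boundary on the positive subgraph.
\end{itemize}
A negative loop contributes $\pm2$. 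The bookkeeping is the same when some of the $t$ negative edges are loops, since only the total $4j-2t$ matters.

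\emph{One-vertex case.} Every edge is a loop. Positive loops contribute $0$, and each negative loop contributes $\pm2$. Switching at the single vertex changes no signs, so $\epsilon$ equals the number $t$ of negative loops. By hypothesis, $t$ is even or $t\ge k$.
\begin{itemize}
\item If $t$ is even, orient half of the negative loops outward and half inward, giving boundary $0$.
\item If $t\ge k$ with $k$ odd, choose $j$ with $4j-2t\equiv0\pmod k$ and $0\le j\le t$. Since $2$ is invertible modulo $k$, this amounts to $2j\equiv t\pmod k$, which has a solution $j\in\{0,\dots,k-1\}\subseteq\{0,\dots,t\}$.
\end{itemize}

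The main obstacle is the middle case with $t$ even and small. There one must check that the cut hypothesis $2k+2$ leaves enough positive edges for $k$ spanning trees after removing the $t\le k-2$ negative ones. The count $r(k+1)-t\ge k(r-1)$ holds comfortably, but it should be verified for $r=2$ as well, where it gives $2k+2-t\ge k$.
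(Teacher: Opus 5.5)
Your proposal is correct and follows the paper's proof essentially step for step: you apply Theorem~\ref{thm:signed-boundaries} with $\beta=0$ when $\epsilon(G)\ge k-1$. Otherwise $t$ is even, half the negative edges go out and half in, the count $r(k+1)-t\ge k(r-1)$ gives $k$ edge-disjoint positive spanning trees, and Theorem~\ref{thm:ordinary-residues} realizes the balanced residual boundary; positive loops should be deleted first, since that theorem is stated for loopless graphs. The only cosmetic difference is the one-vertex case: for odd $t$ the paper orients $(t+k)/2$ negative loops outward to get boundary $2k$, whereas you solve $2j\equiv t\pmod k$ with $0\le j\le k-1\le t$.
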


\begin{proof}
For at least two vertices, delete positive loops and choose a minimum
signature, with $t=\epsilon(G)$ negative edges. If $t\ge k-1$, apply
Theorem~\ref{thm:signed-boundaries} with $\beta=0$. Otherwise $t\le k-2$
and essential unbalance makes $t$ even.

Let $H$ be the spanning subgraph of positive edges. For any partition
$\mathcal P$ into $r\ge2$ parts, summing the cut sizes and deleting at
most $t$ crossing edges gives
\[
 e_H(\mathcal P)\ge\frac{r\lambda(G)}2-t
                  \ge kr-t\ge k(r-1).
\]
The Nash--Williams--Tutte theorem~\cite{NWtrees,Tutte} therefore supplies
$k$ edge-disjoint spanning trees in $H$. Orient half the
negative edges with both half-edges outgoing and half with both incoming.
Their signed boundary $b$ has total zero and is therefore balanced modulo $k$.
Theorem~\ref{thm:ordinary-residues} orients $H$ with boundary $-b$
modulo $k$. Combining these orientations and switching back gives zero
boundary; restore positive loops arbitrarily.

For one vertex, if the number
$r$ of negative loops is even, orient half outward; if $r$ is odd,
essential unbalance gives $r\ge k$, and orient $(r+k)/2$ outward.
The resulting boundary is respectively $0$ or $2k$.
\end{proof}

\paragraph{Comparison.}
For arbitrary boundary prescriptions, Zhu~\cite[Theorem~3]{Zhu} assumes
$\epsilon(G)\ge k$ and edge-connectivity $6k-7$; Theorem~\ref{thm:signed-boundaries}
uses $\epsilon(G)\ge k-1$ and edge-connectivity $2k+2$.
For modulo $k$-orientations under essential
$k$-unbalance, Zhu's bound is also $6k-7$ \cite[Theorem~4]{Zhu}.
Lu, Miao, Tian and Zhang~\cite[Theorem~1.4]{LMTZ} improve it to $6k-10$. Our bound
$2k+2$ agrees with $6k-10$ when $k=3$ and improves it for every odd
$k\ge5$.

\clearpage
\appendix
\section{Self-contained proof of Lemma~\ref{lem:approx}}\label{app:approx}
\setcounter{equation}{0}
\renewcommand{\theequation}{A.\arabic{equation}}

We prove the shifted approximation bound directly by choosing positive weights and then minimizing a weighted squared error. The log-determinant weights follow Barthe~\cite{Barthe} and Alon, Buci\'c and Davies~\cite{ABD}. We use the exchange argument of the latter, with a slack parameter to handle tight rank inequalities by a limit. The proof below uses neither their lemma nor matroid partition.

Let $A\in\Z^{m\times d}$ have independent columns, $d\ge1$, and no zero rows, as in Lemma~\ref{lem:approx}. Write $a_i^{\mathsf T}$ for row $i$ of $A$. Recall that
\[
 \mu=\min_{\varnothing\ne F\subseteq[m]}\frac{\rank(A_F)}{|F|}\in(0,1].
\]
Fix $0<\rho<\mu$. We first obtain error at most $(1-\rho)/2$ and then pass to the endpoint $\rho=\mu$.

\subsection{Choosing the weights}

For $t=(t_1,\ldots,t_m)\in[0,\infty)^m$, define
\begin{equation}\label{eq:app-objective}
 M(t)=\sum_{i=1}^m e^{t_i}a_i a_i^{\mathsf T},
 \qquad F_\rho(t)=\log\det M(t)-\rho\sum_{i=1}^m t_i.
\end{equation}
The rows span $\R^d$, so $M(t)$ is positive definite: for $v\ne0$, the sum
$v^{\mathsf T}M(t)v=\sum_i e^{t_i}(a_i^{\mathsf T}v)^2$ is positive.

We show that $F_\rho$ attains a minimum. Let $\mathcal B$ be the sets of $d$ independent rows. Cauchy--Binet gives
\begin{equation}\label{eq:app-CB}
 \det M(t)=\sum_{J\in\mathcal B}(\det A_J)^2
                       \exp\!\left(\sum_{i\in J}t_i\right).
\end{equation}
Each $\det A_J$ is a nonzero integer, so its square is at least one.

For the current $t$, reorder the indices so that $t_1\ge\cdots\ge t_m\ge0$, and set $t_{m+1}=0$. Scan the rows in this order, accepting a row precisely when it increases their span. The accepted rows form a basis $J$. Among the first $j$ rows, exactly $r_j=\rank(A_{[j]})$ have been accepted, since the accepted rows span all rows already scanned. Therefore
\[
 \sum_{i\in J}t_i
 =\sum_{j=1}^m(t_j-t_{j+1})r_j
 \ge\mu\sum_{j=1}^m(t_j-t_{j+1})j
 =\mu\sum_{i=1}^m t_i.
\]
Keeping the term indexed by $J$ in~\eqref{eq:app-CB} now gives
\begin{equation}\label{eq:app-coercive}
 F_\rho(t)\ge(\mu-\rho)\sum_i t_i.
\end{equation}
Thus the sublevel set $F_\rho(t)\le F_\rho(0)$ is nonempty, closed and bounded. Minimizing on this compact set gives a global minimum $t^*$ on the nonnegative orthant.

Set $w_i=e^{t_i^*}$, let $W=\operatorname{diag}(w_1,\ldots,w_m)$ be the diagonal matrix with these entries, and put $M=A^{\mathsf T}WA$. Differentiating~\eqref{eq:app-objective} gives
\[
 \frac{\partial F_\rho}{\partial t_i}(t^*)=\ell_i-\rho,
 \qquad \ell_i=w_i a_i^{\mathsf T}M^{-1}a_i.
\]
Indeed, $\partial M/\partial t_i=w_i a_i a_i^{\mathsf T}$ and
$\partial\log\det M=\operatorname{tr}(M^{-1}\partial M)$, where $\operatorname{tr}$ denotes the sum of the diagonal entries.
If $t_i^*>0$, the derivative is zero; if $t_i^*=0$, its right derivative is nonnegative. Hence
\begin{equation}\label{eq:app-leverage}
 \ell_i\ge\rho\qquad(i\in[m]).
\end{equation}

\paragraph{Spanning-tree interpretation.}
For a fundamental cycle basis $A=U$ of a graph, the nonzero maximal row minors are indexed by complements of spanning trees and have absolute determinant one. Indeed, the coordinates outside any spanning tree give integral coordinates on the lattice of integer circulations, while a cycle supported on the complement of the selected rows makes the restriction singular. With $c_e=e^{-t_e}$, the Cauchy--Binet expansion therefore gives
\[
 F_\rho(t)=(1-\rho)\sum_e t_e+
           \log\sum_T\prod_{e\in T}c_e ,
\]
where $T$ ranges over spanning trees. Differentiating shows that $1-\ell_e$ is the probability that $e$ belongs to a tree drawn with probability proportional to $\prod_{f\in T}c_f$. Thus the orthant minimization bounds these probabilities by $1-\rho$. The endpoint argument below concerns approximation errors and does not require the weights to converge.

\subsection{Minimizing the approximation error}

Fix an arbitrary $b\in\R^m$ and consider
\[
 \Phi(x,z)=\sum_{i=1}^m w_i(a_i^{\mathsf T}x-b_i-z_i)^2,
 \qquad x\in\R^d,\quad z\in\Z^m.
\]
This function has a globally minimizing pair. For fixed $x$, choosing each $z_i$ to be a nearest integer gives
\[
 \min_{z\in\Z^m}\Phi(x,z)
 =\sum_i w_i\dist(a_i^{\mathsf T}x-b_i,\Z)^2.
\]
The right side is continuous and $\Z^d$-periodic because $A$ is integral. It attains its global minimum on $[0,1]^d$; choose nearest integers there to obtain a minimizing pair $(x,z)$.

Put $r=Ax-b-z$. With $z$ fixed, differentiating the smooth quadratic $\Phi$ at its minimum gives
\begin{equation}\label{eq:app-normal}
 A^{\mathsf T}Wr=0.
\end{equation}
Fix $i$ and let $e_i$ be the $i$th coordinate vector of $\R^m$. A unit change in $z_i$ is compensated by the weighted least-squares adjustment with normal equation $Mh_i=w_i a_i$. Set $h_i=w_iM^{-1}a_i$. For either sign $\varepsilon\in\{-1,1\}$, compare $(x,z)$ with $(x+\varepsilon h_i,z+\varepsilon e_i)$. Expanding the square and using~\eqref{eq:app-normal},
\begin{align*}
 &\Phi(x+\varepsilon h_i,z+\varepsilon e_i)-\Phi(x,z)\\
 &\qquad=-2\varepsilon w_i r_i+h_i^{\mathsf T}Mh_i
                         -2w_i a_i^{\mathsf T}h_i+w_i\\
 &\qquad=w_i(1-\ell_i-2\varepsilon r_i).
\end{align*}
Here $h_i^{\mathsf T}Mh_i=w_i a_i^{\mathsf T}h_i=w_i\ell_i$. Global minimality makes the difference nonnegative for both signs, so~\eqref{eq:app-leverage} yields
\begin{equation}\label{eq:app-residual}
 |r_i|\le\frac{1-\ell_i}{2}\le\frac{1-\rho}{2}
 \qquad(i\in[m]).
\end{equation}

\paragraph{Finite descent for fixed weights.}
In the graphic case $A=U$, suppose positive weights satisfying
\eqref{eq:app-leverage} are supplied. Set $\Psi(z)=\min_x\Phi(x,z)$.
The compensated exchange above attains the new least-squares minimum,
so its displayed difference is also
$\Psi(z+\varepsilon e_i)-\Psi(z)$.
Moreover, $\Psi(z)$ depends only on $Bz$, since $\im U=\ker B$.
If $\Psi(z)\le C$, its residual $r$ satisfies $r^{\mathsf T}Wr\le C$
and $Bz=-Bb-Br$; hence $Bz$ ranges over a bounded set of integer vectors.
Thus strictly improving unit shifts terminate, and at termination
\eqref{eq:app-residual} holds. This is a finite search with exact comparisons
for supplied weights and $\rho<\mu$; no polynomial bound on its length
is established. The closed endpoint still follows by the limit below.

\subsection{The endpoint}

The continuous, $\Z^d$-periodic function
\[
 H(x)=\max_{i\in[m]}\dist(a_i^{\mathsf T}x-b_i,\Z)
\]
attains a minimum on $[0,1]^d$. By~\eqref{eq:app-residual}, this minimum is at most $(1-\rho)/2$ for every $0<\rho<\mu$, and hence at most $(1-\mu)/2$. Choose a minimizing $x$ and nearest integers $z_i$. Then
\[
 \|Ax-b-z\|_\infty=H(x)\le\frac{1-\mu}{2},
\]
which proves Lemma~\ref{lem:approx}, including the closed endpoint.\hfill$\square$

\clearpage
\section{The bound \texorpdfstring{$3n-7$}{3n-7} for three-flow-critical graphs}\label{app:critical-density}
\setcounter{equation}{0}
\renewcommand{\theequation}{B.\arabic{equation}}

The definitions of $(3,6)$-sparsity and tightness are given in Section~\ref{sec:critical-density}. We now exclude equality in the reduced-density bound for graphs on at least five vertices.

\begin{theorem}\label{thm:tight-strong}
Every $(3,6)$-tight graph on at least five vertices is strongly $\Z_3$-connected. Consequently every $\Z_3$-reduced graph, and in particular every three-flow-critical graph, on $n\ge5$ vertices has at most $3n-7$ edges.
\end{theorem}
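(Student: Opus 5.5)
The strategy is to repeat the approximation-and-parity argument of Lemma~\ref{lem:dense-slack} at the tight count $M=3h-6$, and to deal separately with the equality cases that the paper already flags.

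\textbf{Setup and reduction to the tight case.} Let $H$ be $(3,6)$-tight on $h\ge5$ vertices, so $M=3h-6$. Tightness together with sparsity gives \eqref{eq:uniform-density} with ratio $M/(h-1)$ for every proper $Y$, strictly when $3\le|Y|<h$. Hence $H$ is connected and $\tau_f(H)=M/(h-1)=3-3/(h-1)>1$.

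For a balanced $\beta$, build $\eta=Bu$ exactly as in Lemma~\ref{lem:dense-slack}. Then $|u_e|\le 3/\tau_f=3(h-1)/M=1+3/M$. On a bond of size $c$ this gives $|\eta(S)|\le c+3c/M$. The two connected shores contain at least $h-2$ edges, so $c\le 2h-4=2M/3$, and the excess is at most $2$. Parity rules out every excess strictly below $2$. So Lemma~\ref{lem:boundary-cut-test} succeeds unless some bond has $c=2h-4$, both shores are trees, and every edge of $C$ has $|u_e|$ exactly $1+3/M$ with a common orientation sign.

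\textbf{Perturbing the edgewise bounds.} To exclude this equality case, I would not use the uniform bound $|u_e|\le 3/\tau_f$. Instead I would go back to the proof of Lemma~\ref{lem:approx} in Appendix~\ref{app:approx}, which gives the edgewise bound $|r_e|\le(1-\ell_e)/2$ in \eqref{eq:app-residual}, with leverages $\ell_e$ coming from the log-det weights.

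The key step is to choose weights so that $\ell_e>\mu=1-1/\tau$ for every edge lying in some \emph{tree-tree bond}. By the spanning-tree interpretation, this means the edge has probability below $1/\tau$ in a weighted random spanning tree. Equivalently, in a fractional packing of total weight $\tau$ the edge should have load strictly less than one. The approach is to show that an edge of a bond whose shores are trees is not in every optimal partition of \eqref{eq:packing} as a saturated edge.

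When equality holds, the tight partitions of the packing formula are: the singleton partition, and possibly unions of $(3,6)$-tight subsets. The edges that are saturated in every maximal fractional packing are exactly those crossing all minimizing partitions. Since $\tau$ is attained by the singletons, every edge is saturated, so the uniform bound cannot be improved directly.

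The remedy is therefore to weaken the target on non-bond edges in exchange for a strict gain on the bond. I would replace the uniform margin by an edge-dependent one, $\ell_e\ge\rho_e$, from a modified objective $\log\det M-\sum_e\rho_e t_e$. The coercivity argument behind \eqref{eq:app-coercive} needs $\sum_{e\in F}\rho_e<\rank(U_F)$ for every $F$ with $c(G-F)\ge 2$, apart from the trivial cases. So I would set $\rho_e=\mu+\epsilon$ on the edges of a chosen problem bond $C$ and $\mu-\epsilon'$ on the edges inside the two trees. Then I check the rank inequality $\sum_{e\in F}\rho_e\le\rank(U_F)=|F|-c(H-F)+1$ using sparsity. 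Since only bonds with tree shores are in danger, and such a bond has $2h-4$ edges versus $h-2$ tree edges, the tree edges have slack once $h\ge5$.

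\textbf{Handling the obstruction and the expected difficulty.} I expect the main obstacle to be the global version of this perturbation. Several tree-tree bonds may exist at once, and the perturbation must improve all of them simultaneously while keeping the rank inequalities on every $F$. The paper's own remark singles out the degree-five obstruction, which suggests some configurations cannot be fixed by perturbation.

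There I would fall back on local reductions. If $H$ has a vertex of degree $3$, delete it: the remainder is $(3,6)$-tight on $h-1$ vertices, by induction strongly $\Z_3$-connected, and the vertex lifts via Lemma~\ref{lem:critical-reduced}, since three edges to a strongly connected piece realize any residual boundary. A degree-$4$ or degree-$5$ vertex whose neighbourhood gives a small strongly $\Z_3$-connected subgraph (two parallel edges after a contraction) would be handled similarly. The base case $h=5$ leaves only $K_5$ minus an edge, which I would check directly.

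Finally, the consequence follows from the main claim. If a reduced graph had $3n-6$ edges, then by Theorem~\ref{thm:reduced-density} it would be $(3,6)$-tight, hence strongly $\Z_3$-connected, contradicting reducedness. Criticality implies reducedness by Lemma~\ref{lem:critical-reduced}.
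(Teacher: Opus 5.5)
Your equality analysis is right: at $M=3h-6$ the excess on a bond is at most $3c/M\le2$, with equality only for bonds of size $2h-4$ whose shores are trees. Your degree-three deletion and the base case of $K_5$ minus an edge are also fine. The gap is that the equality case is never excluded, and the route you sketch cannot exclude it. For any positive weights the leverages sum to $d=m-h+1$, so the edgewise allowances $p_e=1-\ell_e$ always total $h-1$. A reweighting chosen without reference to $\beta$ can therefore only redistribute allowance, and you would need every tree--tree bond $C$ to get $3p(C)<|C|+2$ simultaneously.

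The icosahedron shows this is impossible. It is $(3,6)$-tight, $5$-regular and edge-transitive, and Hamiltonian cycles of the dual dodecahedron give tree--tree bonds. Averaging $3p(gC)$ over the automorphism orbit of such a bond gives exactly $3|C|(h-1)/|E|=|C|+2$, so some bond in the orbit still permits a full violation. That graph has no vertex of degree three or four, so your fallback does not reach it. Your degree-four/five suggestion (``two parallel edges after a contraction'') is also not a valid lift. Lemma~\ref{lem:critical-reduced} lifts orientations only through a strongly $\Z_3$-connected subgraph of $H$ itself; parallel edges that appear in $H/e$ do not let you lift from $H/e$ back to $H$.

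The paper supplies two ideas your proposal lacks.

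First, equality forces the boundary itself. Applying parity to the two components of $H[S]$ minus an edge gives $\eta(F)=b_F+1$, hence $\eta(w)=d_C(w)+2-d_S(w)$ on $S$ and the negative on $T$. The bonds that can be violated therefore depend on $\beta$. At a degree-five vertex $v$ with $\beta(v)=0$, the congruence $7-2d_{\rm sh}(v)\equiv0\pmod3$ forces $d_C(v)\le3$ on \emph{every} such bond. So one perturbation works for all of them at once: raise the five allowances at $v$ by $t$ and lower every allowance by $5t/m$. This gives $\Delta(C)=d_C(v)-5|C|/m\le-1/3$, and a limit in $t$ finishes Lemma~\ref{lem:tight-degree-five}.

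Second, degree-four vertices, and degree-five vertices with $\beta(u)\ne0$, are removed by splitting off pairs of edges at $u$. This yields $H'$ of potential $3|V|-|E|\in\{5,6\}$ with $\sigma_{H'}\ge1$ on pairs and $\ge2$ on larger sets (Lemma~\ref{lem:tight-splitting}). Lemma~\ref{lem:critical-potential} then contracts an infeasible pair down to a contraction-critical, hence reduced, graph, and Theorem~\ref{thm:reduced-density} gives that graph potential at least six. So potential five is impossible, and potential six forces $H'$ itself to be simple and tight on $h-1$ vertices, where induction applies.

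Without these two ingredients the proposal does not prove the theorem.
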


The restriction $n\ge5$ is necessary: $K_4$ is tight but has no zero-boundary orientation modulo three. Indeed, in such an orientation of a cubic graph every vertex would have to be a source or a sink, forcing the graph to be bipartite.

The proof has two parts. A perturbation of the approximation bound handles a degree-five vertex whose prescribed boundary is zero. Local reductions handle every other possible vertex of degree at most five. The five-vertex base case is proved directly; no enumeration is used.

\subsection{Approximation with separate edge bounds}

The equality case requires different error bounds on different edges.
The next lemma allows this redistribution whenever the bounds satisfy
the spanning-tree inequalities below.

Let $H$ be a connected bridgeless graph on $h\ge2$ vertices, with $m$ edges, incidence matrix $B$, and integer cycle-basis matrix $U$ as in Section~\ref{sec:packing}. Put $d=m-h+1$, the number of columns of $U$. For any edge vector $w$ and edge set $F$, write $w(F)=\sum_{e\in F}w_e$. Write $E_H(X)$ for the set of edges with both endpoints in $X$.

\begin{lemma}[Coordinatewise approximation]\label{lem:coordinate-cover}
Suppose an edge vector $p$ satisfies
\begin{equation}\label{eq:edge-bound-hypotheses}
 \begin{gathered}
 0<p_e<1\quad(e\in E(H)),\qquad p(E(H))=h-1,\\
 p(E_H(X))\le |X|-1\quad(\varnothing\ne X\subseteq V(H)).
 \end{gathered}
\end{equation}
For every $b\in\R^{E(H)}$, there exist $x\in\R^d$ and $z\in\Z^{E(H)}$ such that
\begin{equation}\label{eq:coordinate-approx}
 |(Ux-b-z)_e|\le p_e/2\qquad(e\in E(H)).
\end{equation}
Consequently, for every balanced boundary $\beta:V(H)\to\Z_3$, there are $u\in\R^{E(H)}$ and $\eta\in\Z^{V(H)}$ with
\begin{equation}\label{eq:coordinate-boundary}
 |u_e|\le3p_e,\qquad \eta=Bu,\qquad
 \eta\equiv\beta\pmod3,\qquad
 \eta(v)\equiv\deg_H(v)\pmod2.
\end{equation}
\end{lemma}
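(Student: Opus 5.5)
The plan is to rerun the self-contained argument of Appendix~\ref{app:approx} for $A=U$, replacing the uniform leverage target $\rho$ by edge-dependent targets. The exchange step there gives $|r_e|\le(1-\ell_e)/2$. So it suffices to find positive weights with leverages $\ell_e\ge1-p_e$, at least up to an arbitrarily small slack.

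Fix $0<\varepsilon<1$ and put $\rho_e=(1-\varepsilon)(1-p_e)$. Consider
\[
 F(t)=\log\det M(t)-\sum_e\rho_e t_e ,
\]
minimized over the nonnegative orthant. The stationarity computation of Appendix~\ref{app:approx} then gives $\ell_e\ge\rho_e$ for every edge, and the exchange argument with the least-squares minimizer gives
\[
 |(Ux-b-z)_e|\le\tfrac12\bigl(1-(1-\varepsilon)(1-p_e)\bigr).
\]
The function $\max_e \dist\bigl((Ux-b)_e,\Z\bigr)/p_e$ is continuous and $\Z^d$-periodic, so it attains its minimum. Since that minimum is at most $\bigl(1-(1-\varepsilon)(1-p_e)\bigr)/(2p_e)$ for every $\varepsilon$, letting $\varepsilon\to0$ yields~\eqref{eq:coordinate-approx}. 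Two preliminary facts are needed. First, $U$ has no zero rows because $H$ is bridgeless. Second, $d\ge1$, because $0<p_e<1$ and $p(E(H))=h-1$ force $m>h-1$.

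The main obstacle is coercivity of $F$, which is where~\eqref{eq:edge-bound-hypotheses} enters. Order the coordinates so that $t_1\ge\dots\ge t_m$ and take the greedy basis $J$ as in Appendix~\ref{app:approx}. Abel summation reduces the task to showing, for every prefix set $F$,
\[
 \rank(U_F)\ge\sum_{e\in F}(1-p_e).
\]
By Lemma~\ref{lem:rank}, which uses only connectivity for its first formula, $\rank(U_F)=|F|-c(H-F)+1$, so the inequality is equivalent to $p(F)\ge c(H-F)-1$. To prove it, let $X_1,\dots,X_c$ be the vertex sets of the components of $H-F$. Every edge outside $F$ lies in some $E_H(X_j)$, so
\[
 p(F)\ge p(E(H))-\sum_j p(E_H(X_j))\ge(h-1)-\sum_j(|X_j|-1)=c-1 .
\]
Because $\rho_e$ is a factor $1-\varepsilon$ below $1-p_e$, we then get
\[
 F(t)\ge\varepsilon\sum_e(1-p_e)t_e .
\]
This lower bound is coercive since every $p_e<1$, so a minimizer exists exactly as before. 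Equality in the tree inequalities is the reason the slack $\varepsilon$ and the limit are needed.

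For the consequence, copy the proof of Lemma~\ref{lem:dense-slack}. Route on a spanning tree to get $a\in\Z^{E(H)}$ with $2Ba-B\one\equiv\beta\pmod3$. Apply~\eqref{eq:coordinate-approx} with $b=(\tfrac12\one-a)/3$, and set $u=6(Ux-b-z)$. Then $|u_e|\le3p_e$, and since $BU=0$,
\[
 \eta=Bu=2Ba-B\one-6Bz .
\]
This vector is integral and congruent to $\beta$ modulo three. Modulo two it is congruent to $B\one$, whose coordinate at $v$ has the parity of $\deg_H(v)$.
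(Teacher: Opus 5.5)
Your proposal is correct and follows the paper's proof essentially step for step: the same inequality $p(F)\ge c(H-F)-1$ obtained from the components of $H-F$, the same slack objective $\log\det M(t)-(1-\varepsilon)\sum_e(1-p_e)t_e$ with greedy-basis coercivity, the same leverage and exchange bounds, and the same boundary construction with $u=6(Ux-b-z)$. The only difference is how you pass to $\varepsilon\to0$: you minimize the periodic function $\max_e\dist((Ux-b)_e,\Z)/p_e$, whose minimum is at most $\max_e[1-(1-\varepsilon)(1-p_e)]/(2p_e)\to\tfrac12$ (the maximum over $e$ is implicit in your text), whereas the paper takes a convergent subsequence of the $x$'s in $[0,1]^d$; this difference is immaterial.
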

\begin{proof}
\emph{Rank inequalities.} Put $c_e=1-p_e$. These complementary bounds
replace the uniform rank ratio in Appendix~\ref{app:approx}. If $H-F$
has components with vertex sets $X_1,\ldots,X_r$, then
\[
 p(E(H)\setminus F)\le\sum_{j=1}^r p(E_H(X_j))
 \le\sum_{j=1}^r(|X_j|-1)=h-r.
\]
Thus $p(F)\ge r-1$, and the cycle-space rank identity gives
\begin{equation}\label{eq:coordinate-rank}
 c(F)=|F|-p(F)\le |F|-r+1=\rank(U_F).
\end{equation}

\emph{Choosing weights.} Write $\xi_e^{\mathsf T}$ for row $e$ of $U$.
Fix $0<\varepsilon<1$, and for $t\in[0,\infty)^m$ define
\[
 M(t)=\sum_e e^{t_e}\xi_e\xi_e^{\mathsf T},\qquad
 F_\varepsilon(t)=\log\det M(t)-(1-\varepsilon)\sum_e c_et_e.
\]
The matrix $M(t)$ is positive definite because the columns of $U$ are independent. To see that $F_\varepsilon$ has a minimum, order the edges so that $t_1\ge\cdots\ge t_m\ge0$, set $t_{m+1}=0$, and write $[j]=\{1,\ldots,j\}$. Greedily select a basis $J$ of the rows in this order. Cauchy--Binet and integrality of $U$, followed by Abel summation and~\eqref{eq:coordinate-rank}, give
\begin{align*}
 \log\det M(t)
 &\ge\sum_{i\in J}t_i
   =\sum_{j=1}^m(t_j-t_{j+1})\rank(U_{[j]})\\
 &\ge\sum_{j=1}^m(t_j-t_{j+1})c([j])
   =\sum_{e}c_et_e.
\end{align*}
Since every $c_e>0$, the bound
$F_\varepsilon(t)\ge\varepsilon\sum_e c_et_e$ makes each sublevel set bounded. Continuity therefore gives a minimizing point $t^*$.

Set $w_e=e^{t_e^*}$, $W=\operatorname{diag}(w_e)$ and $M=U^{\mathsf T}WU$. As in~\eqref{eq:app-leverage}, the one-sided derivative conditions at $t^*$ give
\[
 \ell_e:=w_e\xi_e^{\mathsf T}M^{-1}\xi_e
 \ge(1-\varepsilon)c_e.
\]
\emph{Bounding the error.} For these weights, minimize
\[
 \Phi(x,z)=(Ux-b-z)^{\mathsf T}W(Ux-b-z),
 \qquad x\in\R^d,\quad z\in\Z^{E(H)}.
\]
A minimizer exists: after choosing nearest integers $z$, the objective is a continuous $\Z^d$-periodic function of $x$. At a minimizing pair put $r=Ux-b-z$, so $U^{\mathsf T}Wr=0$. Let $e_i$ be the $i$th coordinate vector of $\R^{E(H)}$ and $h_i=w_iM^{-1}\xi_i$. For either sign $s\in\{-1,1\}$, the exchange computation in Appendix~\ref{app:approx} reads
\[
 \Phi(x+s h_i,z+s e_i)-\Phi(x,z)
   =w_i(1-\ell_i-2s r_i)\ge0.
\]
Using both signs yields
\[
 |r_i|\le\frac{1-\ell_i}{2}
 \le\frac{1-(1-\varepsilon)c_i}{2}.
\]
Since $U$ is integral, reducing each resulting $x$ modulo $\Z^d$ into
$[0,1]^d$ does not change these distances to integers. Along a sequence
$\varepsilon\downarrow0$, compactness gives a convergent subsequence.
Continuity of distance to $\Z$ shows that its limit has distance at most
$(1-c_e)/2=p_e/2$ in each coordinate. Choosing nearest integers
proves~\eqref{eq:coordinate-approx}.

\emph{Prescribing the boundary.} Integer routing along a spanning tree
gives $a\in\Z^{E(H)}$ with $2Ba-B\one\equiv\beta\pmod3$:
multiplication by two is invertible modulo three, and the prescribed
vertex vector has sum zero. Apply~\eqref{eq:coordinate-approx} to
$b=(\one/2-a)/3$, and set
\[
 u=6(Ux-b-z),\qquad
 \eta=Bu=2Ba-B\one-6Bz.
\]
The error bound gives $|u_e|\le3p_e$, and the displayed formula gives integrality and the required congruence modulo three. Its parity is correct because $(B\one)_v\equiv\deg_H(v)\pmod2$.
\end{proof}

\subsection{The degree-five case}

For a tight graph on $h$ vertices the uniform edge bound is $a=(h-1)/(h-2)=1+1/(h-2)$. A bond can use the full two units permitted by parity only when both shores are trees; we first determine the boundary forced by this equality, and then perturb the edge bounds so that equality becomes impossible at a degree-five vertex of prescribed boundary zero.

We will repeatedly use two elementary consequences of tightness when $h\ge5$. Deleting a vertex and applying sparsity to the remaining vertices gives minimum degree at least three. A cut with shores $S,T$, both of size at least three, has at least
$(3h-6)-(3|S|-6)-(3|T|-6)=6$ edges; simplicity and minimum degree give at least three for a smaller shore. In particular, the graph is connected and bridgeless.

\begin{lemma}\label{lem:tight-degree-five}
Let $H$ be $(3,6)$-tight, let $v$ have degree five, and let $\beta$ be a balanced boundary with $\beta(v)=0$. Then $H$ has a $\beta$-orientation.
\end{lemma}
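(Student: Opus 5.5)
The plan is to combine Lemma~\ref{lem:coordinate-cover} with the boundary cut criterion, Lemma~\ref{lem:boundary-cut-test}, and the parity argument of Lemma~\ref{lem:dense-slack}. The edge bounds $p$ will be chosen non-uniformly near $v$. Throughout, $h=|V(H)|\ge5$ and $m=3h-6$.

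\emph{Step 1: the uniform bound and its unique failure mode.} Start from the uniform vector $p_e=(h-1)/(3h-6)$. It satisfies \eqref{eq:edge-bound-hypotheses}, and $(3,6)$-sparsity makes $p(E_H(X))<|X|-1$ strict for every proper $X$ with $|X|\ge2$. Lemma~\ref{lem:coordinate-cover} gives $\eta=Bu$ with the right residues and parities. It also gives $|\eta(S)|\le 3p(C)$ on every bond $C=\delta_H(S)$ with $c=|C|$.

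Since the two shores span at least $h-2$ edges, we have $c\le 2h-4$. Hence $3p(C)=c\,(h-1)/(h-2)<c+2$ unless $c=2h-4$, and $c=2h-4$ forces both shores to induce trees. Parity ($\eta(S)\equiv c\pmod 2$) then gives $|\eta(S)|\le c$ on all other bonds. So the only obstruction is a \emph{tree--tree bond} with $|\eta(S)|=c+2$. In that case every cut edge has $|u_e|=3p_e$ with the sign pointing the same way across the cut.

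\emph{Step 2: perturb $p$ to make these bonds strict.} For a tree--tree bond, $3p(C)<c+2$ is equivalent to
\[
 p(E_H(S))+p(E_H(T))>\frac{h-1}{3},
\]
where $(h-1)/3$ is exactly the value under uniform $p$. The idea is to transfer a small amount $\delta$ of weight onto tree edges and off cut edges. Because all proper-subset constraints in \eqref{eq:edge-bound-hypotheses} have slack, any sufficiently small perturbation preserving $p(E(H))=h-1$ and $0<p_e<1$ remains admissible.

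This is where the degree-five vertex and $\beta(v)=0$ enter. At $v$ we need $\eta(v)\equiv0\pmod3$ and $\eta(v)\equiv5\pmod2$, so $\eta(v)=\pm3$ (or larger in absolute value). The first choice to try is the following. Lower $p$ by $\delta$ on the edges at $v$ and raise $p$ correspondingly on edges far from $v$. Then track, for each tree--tree bond, on which side $v$ lies and how many of its five edges are tree edges versus cut edges.

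The hoped-for dichotomy has two cases. In the first, the bond contains enough edges at $v$ that $p(C)$ strictly decreases. In the second, $v$ is a vertex of a tree shore with most of its edges internal. Then saturation of the cut edges, together with the tree structure, pins down $\eta$ on the leaves of that tree and, propagating, at $v$. The target is a contradiction with $|\eta(v)|=3$: for instance, the saturated values force $|\eta(v)|\in\{1,5\}$. Equivalently, a saturated configuration would realize boundary $\pm1$ or $\pm5$ at $v$, not $\pm3$.

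\emph{Step 3: conclude.} With the perturbed $p$, every bond satisfies $|\eta(S)|<c+2$ and hence $|\eta(S)|\le c$ by parity. Lemma~\ref{lem:boundary-cut-test} then yields an orientation with boundary exactly $\eta\equiv\beta\pmod3$.

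The main obstacle is Step~2, namely choosing a single perturbation that is simultaneously good for all tree--tree bonds. Raising weight to help one family of bonds can hurt bonds whose cut contains the raised edges. I expect to need the explicit structure of tree--tree bonds in a tight graph: both shores are trees, with $c=2h-4$, and each leaf of a shore sends at least two edges across by minimum degree three. This structure will show that only bonds separating $v$ in a specific way can be saturated, and that for those bonds the forced value of $\eta(v)$ is incompatible with $\eta(v)\equiv0\pmod3$.
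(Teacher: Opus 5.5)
Your framework matches the paper's: the uniform bound $a/3$ with $a=(h-1)/(h-2)$, reduction to saturated tree--tree bonds with $|C|=2h-4$, a perturbation of $p$ near $v$, then parity and Lemma~\ref{lem:boundary-cut-test}. But Step~2 has a genuine gap: the perturbation has the wrong sign, and the contradiction claimed in your second case is false.

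\emph{The second case.} In a saturated tree--tree bond, delete a tree edge of the shore $S$ and let $F$ be a resulting component, with $b_F$ cut edges. Then $a(b_F-1)\le\eta(F)\le a(b_F+1)$ and $2\le b_F\le 2h-6$, so $b_F-1<\eta(F)<b_F+3$, and parity forces $\eta(F)=b_F+1$. Subtracting these identities over the components of $H[S]-v$ from $\eta(S)=2h-2$ gives
$\eta(v)=\pm\bigl(d_C(v)+2-d_{\mathrm{sh}}(v)\bigr)=\pm\bigl(7-2d_{\mathrm{sh}}(v)\bigr)$.
This is a multiple of $3$ exactly when $d_{\mathrm{sh}}(v)\in\{2,5\}$, i.e.\ $d_C(v)\in\{3,0\}$. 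In particular, when all five edges at $v$ are internal to its shore, $|\eta(v)|=3$, not $1$ or $5$. The hypothesis $\beta(v)=0$ only rules out $d_C(v)\in\{1,2,4\}$.

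\emph{The direction of the perturbation.} The surviving bonds have $d_C(v)\le3<10/3=5|C|/m$, so $v$ has fewer cut edges than average. Lowering $p$ at $v$ and compensating uniformly changes $p(C)$ by a positive multiple of $10/3-d_C(v)>0$, so exactly these bonds gain allowance. When $d_C(v)=0$, no placement of the compensating increase can make $p(C)$ drop at all. The only case where your perturbation lowers $p(C)$ is $d_C(v)=4$, which the residue condition already excludes.

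The fix is to raise the bounds at $v$: take $\Delta_e=1-5/m$ on the five edges at $v$ and $-5/m$ elsewhere. Then $\Delta(C)=d_C(v)-10/3\le-1/3$ on every surviving bond.

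\emph{Step~3.} It still needs care even with the correct $\Delta$. Bonds with $d_C(v)=4$ now gain allowance, so not every bond becomes strict. They are excluded only through $\eta(v)=\pm5$, and that formula uses the exact bound $|u_e|\le a$, which a perturbed $u$ satisfies only approximately. The paper handles this as follows. It takes $p^t=a/3+t\Delta$ with $t\downarrow0$. By finiteness it fixes the integral boundary $\eta$ and a violating shore $S$ along a subsequence. It derives the tree structure and the formula for $\eta(v)$ from the limit $u$. Only then does it compare with $3p^t(C)<|C|+2$ at positive $t$.
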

\begin{proof}
Write $h=|V(H)|$, $m=3h-6$, and $a=(h-1)/(h-2)$. Here $h\ge6$ because $H$ is simple and has a vertex of degree five.

\emph{Equality at a bond.} Suppose $u\in\R^{E(H)}$ satisfies $|u_e|\le a$, and $\eta=Bu$ is integral with $\eta(w)\equiv\deg_H(w)\pmod2$. If a bond $C=\delta_H(S)$ satisfies $\eta(S)\ge |C|+2$, then both shores are trees and, writing $T=V(H)\setminus S$,
\begin{equation}\label{eq:tree-bond-boundary}
 \eta(S)=2h-2,\qquad
 \eta(w)=
 \begin{cases}
 d_C(w)+2-d_S(w),&w\in S,\\
 -\bigl(d_C(w)+2-d_T(w)\bigr),&w\in T.
 \end{cases}
\end{equation}
Here $d_C(w)$ counts the crossing edges at $w$, and $d_S(w)$ or $d_T(w)$ is its degree inside its shore.

To prove this claim, connectedness of the two shores gives $|C|\le m-(h-2)=2h-4$. Therefore
\[
 |C|+2\le\eta(S)\le a|C|\le |C|+2.
\]
Equality forces $|C|=2h-4$, tree shores, and $\eta(S)=2h-2$. Moreover, each crossing value of $u$, read in the direction from $S$ to $T$, is $a$. Neither shore is a singleton, since $\deg_H(w)\le h-1<2h-4$.

Delete an edge of the tree $H[S]$ and let $F$ be one resulting component. If $b_F$ counts the edges of $C$ incident with $F$, minimum degree three gives
\[
 3|F|\le2(|F|-1)+b_F+1,
\]
so $b_F\ge2$. The same argument on $S\setminus F$ gives $b_F\le2h-6$.
Only one edge of $\delta_H(F)$ is outside $C$, and its value has magnitude
at most $a$. Therefore
\[
 b_F-1<a(b_F-1)\le\eta(F)\le a(b_F+1)<b_F+3.
\]
The strict inequalities use $a=1+1/(h-2)$ and $2\le b_F\le2h-6$.
Parity gives $\eta(F)\equiv b_F+1\pmod2$, so the only possible value
is $\eta(F)=b_F+1$. For a vertex $w\in S$, subtract these identities
for the components of $H[S]-w$ from $\eta(S)=2h-2$. Their crossing
counts sum to $|C|-d_C(w)$ and their number is $d_S(w)$, giving the first
vertex formula in~\eqref{eq:tree-bond-boundary}. Reversing the signs
gives the formula on $T$ and proves the claim.

\emph{An admissible perturbation.} The constant vector $p^0_e=a/3$
satisfies~\eqref{eq:edge-bound-hypotheses}. Its total is $h-1$, and for
$3\le|X|<h$,
\[
 p^0(E_H(X))\le\frac{(h-1)(|X|-2)}{h-2}<|X|-1.
\]
The inequality is also strict for $|X|=2$ by simplicity. Increase the
bounds on the five edges at $v$, compensating by a uniform decrease
on all $m$ edges. This defines the perturbation
\[
 \Delta_e=\begin{cases}1-5/m,&e\text{ is incident with }v,\\-5/m,&\text{otherwise.}\end{cases}
\]
Then $\Delta(E(H))=0$. All proper constraints involving at least two
vertices are strict at $p^0$, while singleton constraints are identically
zero. There are finitely many constraints, and $0<p^0_e<1$ for every
edge. Hence $p^t=p^0+t\Delta$ satisfies~\eqref{eq:edge-bound-hypotheses}
for every sufficiently small $t>0$.

\emph{Fixing a violated bond.} Suppose no $\beta$-orientation exists.
For a sequence $t\downarrow0$, use Lemma~\ref{lem:coordinate-cover}
with $p^t$ to obtain $u^t,\eta^t$ satisfying~\eqref{eq:coordinate-boundary}.
An orientation with exact boundary $\eta^t$ would realize $\beta$,
so Lemma~\ref{lem:boundary-cut-test} supplies a violated bond for each
$t$. Choose its shore with positive boundary; parity makes the violation
at least two.

The vectors $u^t$ are bounded, so the integral vectors $\eta^t=Bu^t$
take only finitely many values. There are also only finitely many shores.
Thus an infinite subsequence has the same boundary $\eta$ and the same
violating shore $S$. Take a further subsequence so that $u^t\to u$.
For $C=\delta_H(S)$, the limit satisfies
\[
 Bu=\eta,\qquad |u_e|\le a,\qquad \eta(S)\ge|C|+2.
\]
The equality claim therefore applies: $|C|=2h-4$, both shores are trees,
and~\eqref{eq:tree-bond-boundary} holds.

Let $d_{\rm sh}(v)$ be the degree of $v$ in its tree shore. Since $\eta(v)\equiv\beta(v)=0\pmod3$, formula~\eqref{eq:tree-bond-boundary} and $\deg_H(v)=5$ give
\[
 7-2d_{\rm sh}(v)\equiv0\pmod3,
 \qquad d_{\rm sh}(v)\equiv2\pmod3,
 \qquad d_C(v)\le3.
\]
Consequently
\[
 \Delta(C)=d_C(v)-\frac{5|C|}{m}
 \le3-\frac{10}{3}=-\frac13.
\]
The perturbation thus decreases the total allowance on this bond. Indeed, every positive $t$ in the chosen subsequence satisfies
\[
 |C|+2=\eta(S)\le3p^t(C)
 =|C|+2+3t\Delta(C)<|C|+2,
\]
which is impossible.
\end{proof}

\subsection{Contraction and local reductions}

For a graph $J$ define its potential and the saving from contracting a vertex set by
\[
 P(J)=3|V(J)|-|E(J)|,\qquad
 \sigma_J(X)=3(|X|-1)-e_J(X).
\]
If a partition $\mathcal P$ is contracted and loops are discarded, counting the lost vertices and edges gives
\begin{equation}\label{eq:potential-contraction}
 P(J/\mathcal P)=P(J)-\sum_{X\in\mathcal P}\sigma_J(X).
\end{equation}
Single-edge contractions preserve connectedness and bridgelessness. Boundary values on a contracted part are summed, as in Section~\ref{sec:critical-density}.

\begin{lemma}[Potential under contraction]\label{lem:critical-potential}
Let $(J,\gamma)$ be an infeasible connected bridgeless pair, and suppose $\sigma_J(X)\ge1$ for every $X\subseteq V(J)$ with $|X|\ge2$. Then $P(J)\ge6$. If $P(J)=6$, the pair is contraction-critical and $J$ is simple and $(3,6)$-tight.
\end{lemma}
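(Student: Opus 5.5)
The plan is to reduce to the contraction-critical case and then transport the density bound of Theorem~\ref{thm:reduced-density} back to $J$ using the potential identity~\eqref{eq:potential-contraction}.

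\emph{Step 1: reach a contraction-critical pair.} Starting from $(J,\gamma)$, repeatedly contract a single edge $e$ whenever $(J/e,\gamma/e)$ is still infeasible. Each such contraction preserves connectedness and bridgelessness, as noted before the lemma, and the boundary stays balanced. The process terminates, because the number of vertices drops at each step. It cannot reach a one-vertex graph: there the only balanced boundary is zero, which is trivially realized after discarding loops. When it stops, we have an infeasible pair $(J',\gamma')$ all of whose single-edge contractions are feasible, so this pair is contraction-critical. Moreover $J'=J/\mathcal P$ for the partition $\mathcal P$ of $V(J)$ into the vertex sets merged along the way. These parts are connected, and identifying a part discards exactly the loops formed inside it, so iterated edge contractions agree with contracting $\mathcal P$ at once.

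\emph{Step 2: the density of $J'$.} By Lemma~\ref{lem:critical-reduced}, the graph $J'$ is $\Z_3$-reduced and has at least three vertices. Theorem~\ref{thm:reduced-density} then makes $J'$ simple and $(3,6)$-sparse. Applying sparsity with $Y=V(J')$ gives $|E(J')|\le3|V(J')|-6$, that is, $P(J')\ge6$.

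\emph{Step 3: transport back.} By~\eqref{eq:potential-contraction},
\[
 P(J)=P(J')+\sum_{X\in\mathcal P}\sigma_J(X).
\]
Singleton parts have $\sigma_J(X)=3\cdot0-0=0$. Every part with $|X|\ge2$ has $\sigma_J(X)\ge1$ by hypothesis. Hence $P(J)\ge P(J')\ge6$.

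\emph{Step 4: the equality case.} If $P(J)=6$, both inequalities in Step~3 are tight, so no part has size at least two. Then $\mathcal P$ consists of singletons, no contraction was performed, and $J'=J$. Consequently $(J,\gamma)$ is itself contraction-critical. By Step~2, $J$ is simple and $(3,6)$-sparse, and $|E(J)|=3|V(J)|-6$, so $J$ is $(3,6)$-tight.

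The only delicate point is Step~1: one must confirm that successive edge contractions (with loops discarded) coincide with contracting the final partition, so that~\eqref{eq:potential-contraction} applies. One must also confirm that the hypotheses of Lemma~\ref{lem:critical-reduced} (connected, bridgeless, balanced) survive each contraction. Both are routine bookkeeping.
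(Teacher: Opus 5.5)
Your proposal is correct and follows the paper's proof essentially step for step. You contract edges while infeasibility persists to reach a contraction-critical quotient $J/\mathcal P$, use Lemma~\ref{lem:critical-reduced} and Theorem~\ref{thm:reduced-density} to get $P(J/\mathcal P)\ge6$, and transport back through~\eqref{eq:potential-contraction}, where equality forces every part to be a singleton; your check that iterated edge contractions with loops discarded agree with contracting the final partition is sound and spells out a point the paper leaves implicit.
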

\begin{proof}
Contract edges while the induced pair remains infeasible. The process ends at a contraction-critical pair $(Q,\gamma_Q)$, where $Q=J/\mathcal P$ for a partition into connected parts. By Lemma~\ref{lem:critical-reduced}, $Q$ is reduced. It has at least three vertices: the one-vertex graph and every connected bridgeless two-vertex graph realize every balanced boundary. Theorem~\ref{thm:reduced-density} therefore gives $P(Q)\ge6$. Formula~\eqref{eq:potential-contraction} gives
\[
 P(J)=P(Q)+\sum_{X\in\mathcal P}\sigma_J(X)\ge6,
\]
with strict inequality if any part has at least two vertices. Thus $P(J)=6$ forces the contraction process to make no step: $(J,\gamma)$ is already contraction-critical. Lemma~\ref{lem:critical-reduced} and Theorem~\ref{thm:reduced-density} now make $J$ simple and sparse; its potential six makes it tight.
\end{proof}

The next reduction replaces pairs of incident edges by single edges.
It produces potential five or six; the preceding lemma excludes five
for an infeasible pair and makes potential six a smaller simple tight
graph. The two cases omitted from the table, degree three with boundary
zero and degree five with boundary zero, will be handled by deletion
and Lemma~\ref{lem:tight-degree-five}, respectively.

\begin{lemma}[Splitting a low-degree vertex]\label{lem:tight-splitting}
Let $H$ be $(3,6)$-tight on $h\ge6$ vertices, let $\beta$ be a balanced
boundary, and let $u$ satisfy one of the following four cases. Choose
the indicated number of pairs of incident edges, with no edge used
twice; the pairing is arbitrary. Delete $u$ and replace each paired
path $a u b$ by a new edge $ab$, obtaining $H'$. Parallel edges are retained.
\begin{center}
\begin{tabular}{c c c c c}
\hline
$\deg_H(u)$ & $\beta(u)$ & Pairs & Unpaired edges & $P(H')$\\
\hline
$3$ & $\ne0$ & $1$ & $1$ & $5$\\
$4$ & $0$    & $2$ & $0$ & $5$\\
$4$ & $\ne0$ & $1$ & $2$ & $6$\\
$5$ & $\ne0$ & $2$ & $1$ & $6$\\
\hline
\end{tabular}
\end{center}
One can choose a balanced boundary $\beta'$ on $H'$ so that every $\beta'$-orientation lifts to a $\beta$-orientation of $H$. Moreover, $H'$ is connected and bridgeless, has the potential shown in the table, and satisfies
\[
 \sigma_{H'}(X)\ge1\quad(|X|=2),\qquad
 \sigma_{H'}(X)\ge2\quad(|X|\ge3).
\]
\end{lemma}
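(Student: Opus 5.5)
The proof splits into four parts: the choice of $\beta'$ with the lifting property, the potential count, the bounds on $\sigma_{H'}$, and connectivity.

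\emph{Choosing $\beta'$.} Keep $\beta'=\beta$ on vertices not adjacent to $u$, and read everything modulo three. A paired path $aub$ replaced by an edge $ab$ lifts by orienting $au$ and $ub$ as a directed path in the direction of $ab$. This preserves the contributions at $a$ and $b$ and contributes $0$ at $u$.

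An unpaired edge $uc$ is deleted, and its orientation is fixed in advance. The contribution $\pm1$ at $c$ is then subtracted from $\beta(c)$ to define $\beta'(c)$, and contributes $\mp1$ at $u$. The signs of the unpaired edges are chosen so that their total at $u$ equals $\beta(u)$.
\begin{itemize}
\item Degree three, $\beta(u)\ne0$: the single unpaired edge gives $\pm1$, which covers both nonzero residues.
\item Degree four, $\beta(u)=0$: there are no unpaired edges.
\item Degree four, $\beta(u)\ne0$: two unpaired edges give totals in $\{\pm2,0\}$; since $2\equiv-1$, this covers both nonzero residues.
\item Degree five, $\beta(u)\ne0$: one unpaired edge suffices.
\end{itemize}
$\beta'$ is balanced because the total change equals $\beta(u)$ minus the contributions at $u$. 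Every $\beta'$-orientation of $H'$ therefore lifts.

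\emph{Potential.} $H'$ has $h-1$ vertices and $3h-6-\deg(u)+(\#\text{pairs})$ edges. Hence
\[
P(H')=3(h-1)-(3h-6)+\deg(u)-\#\text{pairs}=3+\deg(u)-\#\text{pairs},
\]
which gives the table values $5,5,6,6$ directly.

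\emph{Sparsity bounds.} Take $X\subseteq V(H')$. Then
\[
e_{H'}(X)\le e_H(X)+\pi_X,
\]
where $\pi_X$ is the number of new edges inside $X$, and $\pi_X\le\#\text{pairs}\le2$. Sparsity of $H$ also applies to $X\cup\{u\}$, which gives
\[
e_H(X)+d_X(u)\le 3|X|-3,
\]
where $d_X(u)$ counts the edges from $u$ into $X$. Every new edge inside $X$ uses two edges from $u$ into $X$, so $d_X(u)\ge2\pi_X$. Therefore
\[
e_{H'}(X)\le 3|X|-3-d_X(u)+\pi_X\le 3|X|-3-\pi_X.
\]
This yields $\sigma_{H'}(X)\ge\pi_X$. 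That alone is too weak when $\pi_X=0$; in that case use $e_H(X)\le3|X|-6$ directly, which gives $\sigma\ge3$ for $|X|\ge3$. For $\pi_X\ge1$ the bound gives $\sigma\ge1$, but $\sigma\ge2$ is needed when $|X|\ge3$.

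\emph{Main obstacle.} The remaining case is $\pi_X=1$ with $|X|\ge3$. Then $d_X(u)\ge2$, and I would try to refine the count using $d_X(u)$ exactly:
\[
e_{H'}(X)\le3|X|-3-d_X(u)+\pi_X\le 3|X|-4\quad\text{whenever } d_X(u)\ge\pi_X+1.
\]
This leaves only $d_X(u)=2$ with $\pi_X=1$. In that case sparsity bounds $e_H(X)\le3|X|-6$ directly, and one new edge gives $e_{H'}(X)\le3|X|-5$, so $\sigma\ge2$.

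For $|X|=2$, simplicity of $H$ together with at most two new edges is not enough by itself. The sharper bound above gives $e_{H'}(X)\le 3-d_X(u)+\pi_X\le 3-\pi_X$, so $\sigma\ge1$ for $\pi_X\ge1$. When $\pi_X=0$, simplicity gives $e_{H'}(X)=e_H(X)\le1$, so $\sigma\ge2$.

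\emph{Connectivity.} $H'$ is connected and bridgeless because $H$ has no small cuts. A cut of $H'$ lifts to a cut of $H$ after placing $u$ on either side. Each paired path through $u$ that crosses the cut is replaced by exactly one crossing edge, and at most the one or two unpaired edges are lost. The earlier bound that cuts of $H$ have at least three edges (six when both shores have size at least three), together with an appropriate choice of the side for $u$, should leave at least two crossing edges in $H'$.
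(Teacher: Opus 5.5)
Your choice of $\beta'$, the potential count and the sparsity bounds follow the paper's proof. For $|X|\ge3$ your final argument is the paper's bound $\sigma_{H'}(X)\ge\max\{3-\pi_X,\pi_X\}$: sparsity of $X$ gives $\sigma\ge3-\pi_X$, and sparsity of $X\cup\{u\}$ gives $\sigma\ge\pi_X$. You should delete the intermediate ``refinement'', though. With $\pi_X=1$ it only gives $e_{H'}(X)\le3|X|-4$, i.e.\ $\sigma\ge1$, so it does not reduce anything to the case $d_X(u)=2$. The direct bound $e_{H'}(X)\le3|X|-6+\pi_X$ already handles every $X$ with $\pi_X=1$, whatever $d_X(u)$ is. Your $|X|=2$ argument via sparsity of $X\cup\{u\}$ is fine; the paper instead notes that the new edges form a matching.

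The genuine gap is the connectivity paragraph, which ends with ``should'' and whose accounting is wrong in two ways.
\begin{itemize}
\item It is not true that only unpaired edges are lost. If $u$ is placed on side $X$, a paired path $aub$ with $a,b\in Y$ contributes two edges to $\delta_H(X\cup\{u\})$ and none to $\delta_{H'}(X)$. In the row $\deg_H(u)=4$, $\beta(u)=0$ there are no unpaired edges, yet two cut edges can disappear.
\item The bound you quote, that every cut of $H$ has at least three edges, minus a loss of up to two leaves only one edge, which does not exclude a bridge.
\end{itemize}

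The repair, as in the paper, is to note that $H'$ contains $H-u$ on the same vertex set. So it suffices that every nontrivial cut $(X,Y)$ of $H-u$ has at least two edges, using sharper bounds for small shores.
\begin{itemize}
\item A singleton shore $\{x\}$ has $\deg_{H-u}(x)\ge3-1=2$, by minimum degree three and simplicity.
\item A two-vertex shore has at least $6-2-2=2$ edges in $H-u$: take the degree sum, subtract the possible internal edge counted twice, and subtract at most two edges to $u$.
\item If both shores have at least three vertices, then since $\deg_H(u)\le5$ one shore, say $X$, contains at most two neighbours of $u$. The cut of $H$ with shores $X$ and $Y\cup\{u\}$ has at least six edges by tightness, and deleting $u$ removes at most two of them.
\end{itemize}
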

\begin{proof}
First orient the unpaired edges with net contribution $\beta(u)$ at $u$. One edge permits either nonzero residue, and two edges directed both inward or both outward permit the two nonzero residues. In the row with no unpaired edge, $\beta(u)=0$. Let $b(w)$ be the contribution of these edges at each vertex, and set $\beta'(w)=\beta(w)-b(w)$ for $w\ne u$. Since $b(u)=\beta(u)$ modulo three and $\sum_w b(w)=0$, the new boundary is balanced. Replacing a new directed edge $a\to b$ by $a\to u\to b$ preserves its endpoint contributions and contributes zero at $u$. Thus an orientation of $H'$ with boundary $\beta'$ lifts as claimed. If $r=\deg_H(u)$ and $\ell$ pairs are used, then $H'$ has $3h-6-r+\ell$ edges and
$P(H')=3+r-\ell$, giving the table.

To verify connectedness and bridgelessness, consider any nontrivial
cut $(X,Y)$ of $H-u$. If a shore has one vertex, its degree after deleting
$u$ is at least two. If a shore has two vertices, minimum degree three
and simplicity give at least $6-2-2=2$ edges leaving it in $H-u$.
Otherwise both shores have at least three vertices. Since
$\deg_H(u)\le5$, one shore, say $X$, contains at most two neighbours
of $u$. The corresponding cut of $H$ has at least six edges and loses
at most two when $u$ is deleted. Thus every cut of $H-u$ has at least
two edges. Adding the new edges preserves this bound.

For a two-vertex set $X$, at most one new edge has both ends in $X$, so $e_{H'}(X)\le2$ and $\sigma_{H'}(X)\ge1$. Now let $|X|\ge3$, and let $j$ count the new edges inside $X$. The new edges form a matching on the neighbours of $u$, so $X$ contains at least $2j$ such neighbours. Applying sparsity in $H$ first to $X$ and then to $X\cup\{u\}$ gives
\[
 e_{H'}(X)=e_H(X)+j\le3|X|-6+j,
\]
and
\[
 e_H(X)+2j\le3|X|-3,
 \qquad e_{H'}(X)\le3|X|-3-j.
\]
Therefore $\sigma_{H'}(X)\ge\max\{3-j,j\}\ge2$.
\end{proof}

\subsection{Completing the induction}

\begin{proof}[Proof of Theorem~\ref{thm:tight-strong}]
We induct on $h=|V(H)|\ge5$.

\emph{Base case.} For $h=5$, the graph is $K_5$ minus an edge and
contains a spanning wheel with four rim vertices. This wheel is strongly
$\Z_3$-connected~\cite[Proposition~3.6]{Lai00}; we include a direct proof.

Fix any balanced boundary $\beta$ on the wheel. Label its rim $v_1,v_2,v_3,v_4$ cyclically, give the rim a clockwise reference orientation, and direct the reference spokes toward the hub. Let $x_i\in\{1,-1\}$ be the value on the rim edge from $v_i$ to $v_{i+1}$. Its spoke must then have value
\[
 s_i=\beta(v_i)-x_i+x_{i-1}\quad\text{in }\Z_3,
\]
with indices modulo four. If $\beta(v_i)=0$, the condition $s_i\ne0$ requires a change of sign between $x_{i-1}$ and $x_i$. If $\beta(v_i)\ne0$, keeping the sign always works, and exactly one of the two possible changes works. When the number of zero rim boundaries is even, change sign exactly at those vertices. When it is odd, also change at one vertex with nonzero boundary, and choose the initial sign to make that additional change allowed. The number of changes is even, so the signs close around the rim. Each spoke value is nonzero, and the hub equation follows from balance. Interpreting $1$ and $-1$ as keeping and reversing the reference directions gives the required orientation. Finally, orient the extra edge of $K_5$ minus an edge arbitrarily and apply the wheel argument to the residual boundary.

\emph{Induction step.} Let $h\ge6$ and suppose that a balanced boundary
$\beta$ is not realizable. The minimum degree is at least three, and
$2|E(H)|=6h-12$ ensures a vertex $u$ of degree three, four or five.
If $\deg_H(u)=5$ and $\beta(u)=0$, Lemma~\ref{lem:tight-degree-five}
is a contradiction. If $\deg_H(u)=3$ and $\beta(u)=0$, orient all three
incident edges away from $u$; the remaining graph is tight on $h-1$
vertices, and induction realizes the balanced residual boundary.

Every other case is covered by Lemma~\ref{lem:tight-splitting}. It produces an infeasible pair $(H',\beta')$ with $P(H')\in\{5,6\}$ satisfying the hypotheses of Lemma~\ref{lem:critical-potential}. Potential five is impossible. Potential six makes $H'$ simple and tight on $h-1\ge5$ vertices, where induction again gives the required orientation. This proves strong $\Z_3$-connectivity.

\emph{Density conclusion.} By Theorem~\ref{thm:reduced-density}, a
reduced graph on $n\ge5$ vertices is simple and sparse. Equality
$m=3n-6$ would make it tight, hence strongly $\Z_3$-connected,
contradicting reducedness. Thus $m\le3n-7$.
Lemma~\ref{lem:critical-reduced} applies this conclusion to
three-flow-critical graphs.
\end{proof}

Together with Proposition~\ref{prop:critical-small-orders}, this shows
that $K_4$ is the only three-flow-critical graph attaining $3n-6$.
The six-vertex example in that proposition attains the improved bound
$3n-7$.

\clearpage
\section*{AI methodology}
The author used ChatGPT, including Codex, to assist with literature
searches, exposition, proof organization, and the development and checking
of proof arguments throughout the paper, particularly those in
Section~\ref{sec:critical-density} and Appendix~\ref{app:critical-density}.
The author verified the correctness of the content, the originality
of the contributions, and the accuracy of all references.

\section*{Acknowledgments}
The author thanks Noga Alon for insightful correspondence and for
drawing his attention to the paper by Alon, Buci\'c, and Davies~\cite{ABD}.
He is also grateful to M.~J.~Moghaddamzadeh for sharing some of his
ideas on flows in graphs.

\begingroup
\raggedright
\setlength{\parskip}{0pt}

\endgroup


\begin{thebibliography}{CLSW+26}
\interlinepenalty=10000
\setlength{\itemsep}{0pt}
\setlength{\parsep}{0pt}
\setlength{\parskip}{0pt}
\bibitem[ABD26]{ABD}
N. Alon, M. Buci\'c and J. Davies,
\emph{Hadwiger--Nelson Problem for Typical Norms},
\href{https://arxiv.org/abs/2609.36220v1}{arXiv:2609.36220v1} (2026), Lemma~7.

\bibitem[ADL+26]{ADLMS}
A. S. \'{A}rnad\'ottir, Z. Dvo\v{r}\'ak, B. Lidick\'y, B. Moore,
E. Smith-Roberge and R. \v{S}\'amal,
\emph{Flow-critical graphs}, Journal of Graph Theory (2026), Early View, 1--41.
\href{https://doi.org/10.1002/jgt.70143}{doi:10.1002/jgt.70143}.

\bibitem[BG14]{BG}
J. Bar\'at and D. Gerbner,
\emph{Edge-decomposition of graphs into copies of a tree with four edges},
Electronic Journal of Combinatorics \textbf{21}(1) (2014), Paper P1.55.
\href{https://www.combinatorics.org/ojs/index.php/eljc/article/view/v21i1p55}{Published article}, Conjecture~6.

\bibitem[Bar98]{Barthe}
F. Barthe, \emph{On a reverse form of the Brascamp--Lieb inequality},
Inventiones Mathematicae \textbf{134} (1998), 335--361.
\href{https://doi.org/10.1007/s002220050267}{doi:10.1007/s002220050267}.

\bibitem[CN16]{ChenNing}
F. Chen and B. Ning, \emph{A note on nowhere-zero $3$-flows and $\mathbb Z_3$-connectivity},
Ars Mathematica Contemporanea \textbf{10} (2016), 91--98.
\href{https://doi.org/10.26493/1855-3974.676.1a9}{doi:10.26493/1855-3974.676.1a9}.
\href{https://arxiv.org/abs/1406.1554}{arXiv:1406.1554}, Theorem~1.17 of the arXiv version.

\bibitem[CLS+26]{CLSWWei}
D. W. Cranston, J. Li, B. Su, Z. Wang and C. Wei,
\emph{Orientations of $10$-edge-connected planar multigraphs and applications},
Journal of Graph Theory (2026), Early View, 1--25.
\href{https://doi.org/10.1002/jgt.70115}{doi:10.1002/jgt.70115}.

\bibitem[Cun85]{Cunningham}
W. H. Cunningham, \emph{Optimal attack and reinforcement of a network},
Journal of the ACM \textbf{32} (1985), 549--561.
\href{https://doi.org/10.1145/3828.3829}{doi:10.1145/3828.3829}.

\bibitem[DRS17]{DFRS}
M. DeVos, E. Rollov\'a and R. \v{S}\'amal,
\emph{A new proof of Seymour's $6$-flow theorem},
Journal of Combinatorial Theory, Series B \textbf{122} (2017), 187--195.
\href{https://doi.org/10.1016/j.jctb.2016.05.010}{doi:10.1016/j.jctb.2016.05.010}.
\href{https://arxiv.org/abs/1512.06214}{arXiv:1512.06214}.

\bibitem[DM23]{DMdensity}
Z. Dvo\v{r}\'ak and B. Mohar,
\emph{On density of $\mathbb Z_3$-flow-critical graphs},
SIAM Journal on Discrete Mathematics \textbf{37} (2023), 699--717.
\href{https://doi.org/10.1137/22M1496529}{doi:10.1137/22M1496529}.
\href{https://arxiv.org/abs/2205.07498v3}{arXiv:2205.07498v3}, Conjecture~1.3.

\bibitem[Edm65]{Edmonds}
J. Edmonds, \emph{Minimum partition of a matroid into independent subsets},
Journal of Research of the National Bureau of Standards, Section B \textbf{69B} (1965), 67--72.
\href{https://doi.org/10.6028/jres.069B.004}{doi:10.6028/jres.069B.004}.

\bibitem[EJLT18]{EJLT}
L. Esperet, R. de Joannis de Verclos, T.-N. Le and S. Thomass\'e,
\emph{Additive bases and flows in graphs}, SIAM J. Discrete Math. \textbf{32} (2018), 534--542.
\href{https://doi.org/10.1137/17M1113758}{doi:10.1137/17M1113758}.

\bibitem[FJS01]{FJS}
A. Frank, T. Jord\'an and Z. Szigeti, \emph{An orientation theorem with parity conditions}, Discrete Appl. Math. \textbf{115} (2001), 37--47.
\href{https://doi.org/10.1016/S0166-218X(01)00213-X}{doi:10.1016/S0166-218X(01)00213-X}.

\bibitem[FK02]{FK}
A. Frank and Z. Kir\'aly, \emph{Graph orientations with edge-connection and parity constraints}, Combinatorica \textbf{22} (2002), 47--70.
\href{https://doi.org/10.1007/s004930200003}{doi:10.1007/s004930200003}.

\bibitem[Gab23]{Gabow}
H. N. Gabow, \emph{A weight-scaling algorithm for $f$-factors of multigraphs},
Algorithmica \textbf{85} (2023), 3214--3289.
\href{https://doi.org/10.1007/s00453-023-01127-x}{doi:10.1007/s00453-023-01127-x}.
The factor polytope is stated in Appendix~C of
\href{https://arxiv.org/abs/2010.01102}{arXiv:2010.01102}, p.~56.

\bibitem[Hak65]{Hakimi}
S. L. Hakimi, \emph{On the degrees of the vertices of a directed graph},
Journal of the Franklin Institute \textbf{279} (1965), 290--308.
\href{https://doi.org/10.1016/0016-0032(65)90340-6}{doi:10.1016/0016-0032(65)90340-6}.

\bibitem[HLL18]{HLL}
M. Han, H.-J. Lai and J. Li, \emph{Nowhere-zero $3$-flow and $\mathbb Z_3$-connectedness in graphs with four edge-disjoint spanning trees}, J. Graph Theory \textbf{88} (2018), 577--591.
\href{https://doi.org/10.1002/jgt.22231}{doi:10.1002/jgt.22231}.

\bibitem[HLWZ18]{HLWZ}
M. Han, J. Li, Y. Wu and C.-Q. Zhang,
\emph{Counterexamples to Jaeger's Circular Flow Conjecture},
J. Combin. Theory Ser. B \textbf{131} (2018), 1--11.
\href{https://doi.org/10.1016/j.jctb.2018.01.002}{doi:10.1016/j.jctb.2018.01.002}.

\bibitem[Has16]{Hasanvand2016}
M. Hasanvand, \emph{Tutte's $3$-Flow Conjecture in $3$-tree-connected graphs},
\href{https://arxiv.org/abs/1611.02231v2}{arXiv:1611.02231v2} (2016; revised 2022), Conjecture~3.

\bibitem[Has24]{Hasanvand2024}
M. Hasanvand, \emph{Modulo orientations with bounded out-degrees},
Discrete Mathematics \textbf{347} (2024), Article 113634.
\href{https://doi.org/10.1016/j.disc.2023.113634}{doi:10.1016/j.disc.2023.113634}.

\bibitem[Jae88]{Jaeger88}
F. Jaeger, \emph{Nowhere-zero flow problems}, in:
Selected Topics in Graph Theory 3, Academic Press, 1988, pp. 71--95.

\bibitem[JLPT92]{JLPT}
F. Jaeger, N. Linial, C. Payan and M. Tarsi,
\emph{Group connectivity of graphs---a nonhomogeneous analogue of nowhere-zero flow properties},
Journal of Combinatorial Theory, Series B \textbf{56} (1992), 165--182.
\href{https://doi.org/10.1016/0095-8956(92)90016-Q}{doi:10.1016/0095-8956(92)90016-Q}.

\bibitem[Jaf26]{Jafari}
A. Jafari, \emph{Graphic additive bases and graph flows},
\href{https://arxiv.org/abs/2610.07034v3}{arXiv:2610.07034v3} (2026).

\bibitem[Kir14]{Kiraly}
Cs. Kir\'aly, \emph{Algorithms for finding a rooted $(k,1)$-edge-connected orientation},
Discrete Applied Mathematics \textbf{166} (2014), 263--268.
\href{https://doi.org/10.1016/j.dam.2013.10.004}{doi:10.1016/j.dam.2013.10.004}.

\bibitem[Lai00]{Lai00}
H.-J. Lai, \emph{Group connectivity of $3$-edge-connected chordal graphs},
Graphs and Combinatorics \textbf{16} (2000), 165--176.
\href{https://doi.org/10.1007/s003730050014}{doi:10.1007/s003730050014}.

\bibitem[LMS+22]{LMSWW}
J. Li, Y. Ma, Y. Shi, W. Wang and Y. Wu,
\emph{On $3$-flow-critical graphs}, European Journal of Combinatorics
\textbf{100} (2022), Article 103451.
\href{https://doi.org/10.1016/j.ejc.2021.103451}{doi:10.1016/j.ejc.2021.103451}.

\bibitem[LTWZ18]{LTWZindex}
J. Li, C. Thomassen, Y. Wu and C.-Q. Zhang,
\emph{The flow index and strongly connected orientations},
European Journal of Combinatorics \textbf{70} (2018), 164--177.
\href{https://doi.org/10.1016/j.ejc.2017.12.009}{doi:10.1016/j.ejc.2017.12.009}.

\bibitem[LTWZ13]{LTWZ}
L. M. Lov\'asz, C. Thomassen, Y. Wu and C.-Q. Zhang,
\emph{Nowhere-zero $3$-flows and modulo $k$-orientations},
Journal of Combinatorial Theory, Series B \textbf{103} (2013), 587--598.
\href{https://doi.org/10.1016/j.jctb.2013.06.003}{doi:10.1016/j.jctb.2013.06.003}.

\bibitem[LMTZ26]{LMTZ}
Y. Lu, Z. Miao, Z. Tian and C.-Q. Zhang,
\emph{Modulo $k$-orientations of highly edge-connected signed graphs},
\href{https://arxiv.org/abs/2610.42019v1}{arXiv:2610.42019v1} (2026), Theorem~1.4.

\bibitem[NW60]{NW}
C. St. J. A. Nash-Williams,
\emph{On orientations, connectivity and odd-vertex-pairings in finite graphs},
Canadian Journal of Mathematics \textbf{12} (1960), 555--567.
\href{https://doi.org/10.4153/CJM-1960-049-6}{doi:10.4153/CJM-1960-049-6}.

\bibitem[NW61]{NWtrees}
C. St. J. A. Nash-Williams, \emph{Edge-disjoint spanning trees of finite graphs},
Journal of the London Mathematical Society \textbf{36} (1961), 445--450.
\href{https://doi.org/10.1112/jlms/s1-36.1.445}{doi:10.1112/jlms/s1-36.1.445}.

\bibitem[Sey81]{Seymour}
P. D. Seymour, \emph{Nowhere-zero $6$-flows},
Journal of Combinatorial Theory, Series B \textbf{30} (1981), 130--135.
\href{https://doi.org/10.1016/0095-8956(81)90058-7}{doi:10.1016/0095-8956(81)90058-7}.

\bibitem[Tut61]{Tutte}
W. T. Tutte, \emph{On the problem of decomposing a graph into $n$ connected factors},
Journal of the London Mathematical Society \textbf{36} (1961), 221--230.
\href{https://doi.org/10.1112/jlms/s1-36.1.221}{doi:10.1112/jlms/s1-36.1.221}.

\bibitem[Zhu15]{Zhu}
X. Zhu, \emph{Circular flow number of highly edge connected signed graphs},
Journal of Combinatorial Theory, Series B \textbf{112} (2015), 93--103.
\href{https://doi.org/10.1016/j.jctb.2014.12.002}{doi:10.1016/j.jctb.2014.12.002}.

\end{thebibliography}
\end{document}